\newtheorem{thmA}{Theorem}
\newtheorem{theorem}{Theorem}[section]
\newtheorem{thm}[theorem]{Theorem}
\newtheorem{lemma}[theorem]{Lemma}
\newtheorem{cor}[theorem]{Corollary}
\newtheorem{prop}[theorem]{Proposition}
\newtheorem{addendum}[theorem]{Addendum}
\theoremstyle{definition}
\newtheorem{definition}[theorem]{Definition}
\theoremstyle{remark}
\newtheorem{remark}[theorem]{Remark}
\numberwithin{equation}{section}
\def\E{\Lambda}
\def\P{\Pi}
\def\<{\langle}
\def\>{\rangle}
\def\A{\mathcal A}
\def\R{\mathcal R}
\def\Y{\mathcal Y}
\def\H{{\rm{H}}}
\def\th{\theta}
\def\Th{\Theta}
\def\C{\mathcal C}
\def\La{\Lambda}
\def\cech{\check}
\def\T{T}
\def\U{U}
\def\a{\alpha}
\def\b{\beta}
\def\c{\gamma}
\def\-{\underline}
\def\Z{\mathbb Z}
\def\N{\mathbb N}
\def\G{\Gamma}
\def\S{\Sigma}
\def\ln{\<\!\<}
\def\rn{\>\!\>}
\def\serieslogo@{\relax}
\def\@setcopyright{\relax}
\begin{document}

\title{Decision problems and profinite completions of groups}
 
\author[Martin R. Bridson]{Martin R.~Bridson}
\address{Mathematical Institute, 24-29 St Giles', Oxford OX1 3LB, UK} 
\email{bridson@maths.ox.ac.uk} 

\date{8 April 2008; 18 September 2008}

\subjclass[2000]{20E18, 20F10}

\keywords{Profinite groups, conjugacy problem, isomorphism problem}

\thanks{This research was supported
by a Senior Fellowship from the EPSRC of Great Britain.}

\begin{abstract}
We consider pairs of finitely presented, residually finite groups $P\hookrightarrow
\G$ for which the induced map of profinite completions $\hat P\to \hat\G$ is an
isomorphism. We prove that there is no algorithm that, given an arbitrary such
pair, can determine whether or not $P$ is isomorphic to $\G$.
We construct pairs for which the conjugacy problem in $\G$
can be solved in quadratic time but the conjugacy problem in $P$ is unsolvable.

Let $\mathcal J$ be the class of super-perfect groups that 
have a compact classifying space and no proper subgroups of finite index. 
We prove that there does not exist an algorithm that, given a 
finite presentation of a group $\G$ and a guarantee  that $\G\in\mathcal J$,
can determine whether or not $\G\cong\{1\}$.

We construct a finitely presented acyclic group $\H$ and an integer $k$
such that there is no algorithm that can determine which $k$-generator
subgroups of $\H$ are perfect.
\end{abstract}

\maketitle

\centerline{\em{For Karl Gruenberg, in memoriam}}

\smallskip

\section*{Introduction}

The profinite completion of a group $\G$ is the inverse
limit of the directed system of
finite quotients of $\G$; it is denoted $\hat \G$. 
The natural
map $\G\to \hat \G$ is injective if and only if $\G$ is residually finite. In \cite{BG}  
Bridson and Grunewald settled a question of Grothendieck \cite{groth} by
constructing pairs of finitely presented, residually finite groups $u: P\hookrightarrow\Gamma$ such
that $\hat u :\hat P\to\hat\Gamma$ is an isomorphism but $P$ is not isomorphic to (or even
quasi-isometric to) $\Gamma$. Pairs of finitely generated groups with this property had
been found earlier by Platonov and Tavkin \cite{PT}, Bass and Lubotzky \cite{BL},
and Pyber \cite{pyber}.  

In the present article we begin to explore how different a pair of 
finitely presented residually finite groups $u:P\hookrightarrow\G$ can be if
$\hat u :\hat P\to\hat\Gamma$ an isomorphism, and ask how hard it can be to determine
if $u$ is an isomorphism. To make precise the distinctions involved, we focus
 on the basic decision problems of group theory.
 Since all finitely generated, residually finite groups has a solvable
word problem, the first serious issue to be addressed is the solvability of the
conjugacy problem. 

\begin{thmA}\label{t:conj}
There exist pairs of finitely presented, residually finite groups $u:P\hookrightarrow\G$
such that $\hat u:\hat P\to \hat \G$ is an isomorphism but the conjugacy problem in $P$
is unsolvable while the conjugacy problem in $\G$ can be solved in quadratic time.
\end{thmA}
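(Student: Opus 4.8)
The plan is to realise the pair $u:P\hookrightarrow\G$ as the inclusion of a fibre product into a direct square, so that the profinite statement comes from the Platonov--Tavgen criterion used by Bridson and Grunewald, the finiteness properties come from the $1$--$2$--$3$ Theorem of Baumslag--Bridson--Miller--Short, and the conjugacy phenomena come from the geometry of hyperbolic groups together with an elementary centraliser computation inside the fibre product. The one genuinely new ingredient is a finitely presented group $Q$ of type $F_3$ (one may even take $K(Q,1)$ to be a finite complex) such that $H_2(Q;\Z)=0$, $Q$ has no proper subgroup of finite index, and the word problem in $Q$ is \emph{unsolvable}. Producing such a group is the step I expect to carry the main weight: I would feed a finitely presented group $Q_0$ with unsolvable word problem into the Higman embedding theorem and then perform the homological surgery that kills $H_1$, $H_2$ and all finite quotients --- the method underlying the examples of \cite{BG} --- taking care that the copy $Q_0\hookrightarrow Q$ remains explicit, so that the word problem in $Q$ is still unsolvable.

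Granting such a $Q$, I would apply a residually finite version of the Rips construction (Wise's, say) to obtain a short exact sequence $1\to N\to G\to Q\to 1$ in which $G$ is a torsion-free hyperbolic group that is residually finite, $N$ is finitely generated, and $N$ contains an element $m$ that is not a proper power in $G$ (a routine matter to arrange in the construction; alternatively one works with the virtually cyclic centraliser at the cost of a slightly longer reduction). Put $\G:=G\times G$ and let $P:=\{(g,h)\in G\times G:\ gN=hN\}$ be the associated fibre product, with $u$ the inclusion. Then $\G$ is finitely presented and residually finite; $P$ is finitely presented by the $1$--$2$--$3$ Theorem (using that $G$ is finitely presented, $N$ is finitely generated and $Q$ is of type $F_3$) and is residually finite as a subgroup of $\G$. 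Since $G$ is hyperbolic its conjugacy problem, and hence that of $\G=G\times G$ (which reduces to two conjugacy tests in $G$), is solvable in quadratic time. Since $Q$ is finitely presented, has no proper subgroup of finite index and has $H_2(Q;\Z)=0$, and since $N$ is finitely generated as a normal subgroup, the Platonov--Tavgen criterion in the form used in \cite{BG} shows that $\hat u:\hat P\to\hat\G$ is an isomorphism.

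It remains to see that the conjugacy problem in $P$ is unsolvable, and for this I would reduce the word problem of $Q$ to it. Because $G$ is torsion-free hyperbolic and $m$ is not a proper power, $C_G(m)=\langle m\rangle\subseteq N$. Given a word $w$ in the generators of $G$ (equivalently, of $Q$), both $(m,m)$ and $(wmw\inv,m)$ lie in $P$, their two coordinates having the same image in $Q$. A direct computation shows that $(m,m)$ and $(wmw\inv,m)$ are conjugate in $P$ if and only if some $a\in G$ with $ama\inv=wmw\inv$ has trivial image in $Q$; the set of such $a$ is the coset $w\,C_G(m)$, and since $C_G(m)\subseteq N$ every element of it has image $\bar w$ in $Q$, so conjugacy holds precisely when $w\in N$, i.e. when $w=1$ in $Q$. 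Finally, words in the generators of $P$ representing $(m,m)$ and $(wmw\inv,m)$ can be found effectively: $(m,m)$ lies in the diagonal copy of $G$ inside $P$, and for $(wmw\inv,m)$ one simply searches through words in the (known) generators of $P$ for one that evaluates to it in $G\times G$, a search that terminates because the element does lie in $P$. Hence $w\mapsto\bigl[(m,m),(wmw\inv,m)\bigr]$ is a computable many-one reduction of the word problem of $Q$ to the conjugacy problem of $P$; as the former is unsolvable, so is the latter.

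All the pairs $u:P\hookrightarrow\G$ so produced have the required properties. The delicate point is the construction of $Q$ in the first step --- a single finitely presented group that is simultaneously of type $F_3$, homologically trivial in degrees $1$ and $2$, without subgroups of finite index, and with unsolvable word problem. Everything else combines standard tools --- the Rips--Wise construction, the $1$--$2$--$3$ Theorem, the Platonov--Tavgen/\cite{BG} isomorphism criterion, and the quadratic-time solvability of the conjugacy problem in hyperbolic groups --- with the elementary centraliser computation in the fibre product.
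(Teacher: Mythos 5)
Your architecture is exactly that of the paper's proof of Theorem~\ref{t:conj1}: feed a suitable $Q$ into the Rips--Wise construction, take $\G=G\times G$ and the fibre product $P$, get finite presentability of $P$ from the 1--2--3 Theorem and $\hat u$ an isomorphism from the Platonov--Tavgen/\cite{BG} criterion, solve conjugacy in $\G$ quadratically (the paper sharpens this to RAM-linear via Epstein--Holt), and reduce the word problem of $Q$ to the conjugacy problem of $P$ via the centraliser of $(m,m)$. That last reduction is verbatim the paper's argument with $m=a_1$; note in passing that you do not need $m$ to be a non-proper-power --- for any nontrivial $m\in N$, torsion-freeness of $Q$ already forces the cyclic centraliser $C_G(m)$ to lie in $N$, which is all the computation uses. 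Your point about effectively locating $(m,m)$ and $(wmw\inv,m)$ as words in the generators of $P$ is a legitimate (and correct) tidying of a detail the paper leaves implicit.

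The genuine issue is in the step you yourself flag as carrying the main weight: the construction of $Q$. Two things in your sketch do not work as stated. First, the Higman embedding theorem is the wrong tool: it gives no control over type $F_3$ (or over having a compact classifying space), which you need for the 1--2--3 Theorem; the paper instead starts from the Collins--Miller group, which is already finitely presented with unsolvable word problem and a compact $2$-dimensional $K(G,1)$, and kills finite quotients by amalgamations and HNN extensions over free subgroups as in \cite{mb-ep}, which preserve asphericity. Second, the ``homological surgery'' that kills $H_2$ is the passage to the universal central extension $\tilde{G'}\to G'$, and your safeguard --- keeping ``the copy $Q_0\hookrightarrow Q$ explicit'' --- is unavailable there: $Q_0$ need not embed in $\tilde{G'}$ at all, since $\tilde{G'}$ only surjects onto $G'$ with central kernel. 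One needs a separate argument that unsolvability of the word problem survives this step; this is the paper's Proposition~\ref{p:WPtilde} (for a finitely presented perfect group with finitely generated centre, the word problem of $\tilde G$ is solvable if and only if that of $G$ is), whose proof in turn rests on Lemma~\ref{l:Zfg}. Without some such lemma your $Q$ is not known to have unsolvable word problem, and the whole reduction collapses. With Proposition~\ref{p:WPtilde} supplied, your proof is complete and coincides with the paper's.
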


Refinements of this result are discussed in Section \ref{s:conj}. In particular
the  ``quadratic time" assertion is sharpened and made more precise, and the issue
of whether $\G$ can be made conjugacy separable is discussed.

\smallskip  

Concerning the isomorphism problem we prove the following result, a  more technical
statement of which appears as Theorem \ref{t:iso1}.

\begin{thmA}\label{t:iso}
There does not exist an algorithm that, given a 
finitely presented, residually finite group $\G$ and a finitely
presentable subgroup $u:P\hookrightarrow\G$ with
$\hat u:\hat P\to \hat \G$ an isomorphism, can determine whether or
not $P$ is (abstractly) isomorphic to $\G$, nor does there exist an algorithm that 
can determine whether or not $u$ is an isomorphism.
\end{thmA}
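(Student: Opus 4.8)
The plan is to exploit the Bridson--Grunewald machinery (and its roots in the Haefliger--Bestvina--Brady / Mihailova tradition) to transport an undecidable problem about finitely presented groups into a question about isomorphism within a Grothendieck pair. Concretely, I would start from a recursive sequence of finite presentations $Q_n=\langle X\mid R_n\rangle$ for which the triviality problem is undecidable; by a theorem of the Adian--Rabin type one may further assume each $Q_n$ is perfect (indeed one can arrange the $Q_n$ to be either trivial or to have, say, infinite abelianization-free behaviour that is detected homologically). The key input is the fibre-product construction: given a short exact sequence $1\to N\to G\twoheadrightarrow Q\to 1$ with $G$ a suitable hyperbolic or CAT(0) group of type $F_\infty$ and $Q$ of type $F_2$, the fibre product $P=\{(g,h): gN=hN\}\le G\times G$ is finitely presented (the ``1-2-3 Theorem'' of Baumslag--Bridson--Miller--Short), and the inclusion $P\hookrightarrow G\times G$ induces an isomorphism of profinite completions precisely when $\hat Q=1$, i.e. when $Q$ has no proper finite-index subgroups. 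Running this uniformly over $n$ produces a recursive family of Grothendieck pairs $u_n:P_n\hookrightarrow \G:=G\times G$ (a single ambient group $\G$, independent of $n$), with $\hat u_n$ an isomorphism for all $n$ in the relevant range.

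Next I would pin down when $u_n$ is an isomorphism, equivalently when $P_n=\G$: this happens exactly when $N=1$ relative to the presentation, which one arranges to correspond to $Q_n$ being trivial. So the undecidability of ``$Q_n\cong 1$'' immediately yields that there is no algorithm deciding whether $u_n:P_n\hookrightarrow\G$ is an isomorphism. For the first assertion of the theorem — that one cannot decide whether $P_n$ is \emph{abstractly} isomorphic to $\G$ — I would argue that in the non-trivial case $P_n$ is genuinely different from $\G$: since $P_n$ is a proper subgroup of $\G=G\times G$ of infinite index whenever $N$ is infinite (which, again via Adian--Rabin, one can force when $Q_n\neq 1$), $P_n$ and $\G$ have different, algorithmically indistinguishable, but intrinsically distinct structure. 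The cleanest route is to make $G$ a group for which $\G=G\times G$ has some robust rigidity invariant — e.g. a fixed cohomological dimension or a fixed ``size'' of the centre or of $H_2$ — that $P_n$ fails to share in the non-trivial case, and to quote the Bridson--Grunewald observation that $P_n\not\cong\G$ whenever $Q_n$ is infinite. Then ``decide $P_n\cong\G$'' would decide ``$Q_n\cong 1$'', a contradiction.

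I would then remark that the technical sharpening (Theorem \ref{t:iso1}) records exactly which $G$ and $Q$ are used and how the reductions are made effective, and that the same construction can be tuned to keep $\G$ fixed and residually finite (indeed linear), so that the statement is about a single group $\G$ together with a recursively presented family of subgroups. The main obstacle — and the place where real work is needed rather than citation — is the abstract-isomorphism claim: ruling out coincidences $P_n\cong\G$ in the nontrivial cases requires an invariant of $\G$ that is (i) not shared by the relevant fibre products and (ii) compatible with the uniformity of the construction, so that a hypothetical isomorphism-decider would genuinely solve the triviality problem. Establishing the profinite-isomorphism statement $\hat u_n:\hat P_n\to\hat\G$, by contrast, is essentially the Bridson--Grunewald criterion: $\hat u$ is an isomorphism iff $Q$ is super-perfect and has no proper finite-index subgroups, both of which are arranged at the outset by choosing the $Q_n$ from an appropriate Adian--Rabin-type family of perfect groups with no finite quotients.
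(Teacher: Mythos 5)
Your overall architecture coincides with the paper's: start from a recursive family $(Q_n)$ with undecidable triviality, apply the Rips--Wise construction to get $1\to N\to\Gamma_n\to Q_n\to 1$, form the fibre product $P_n\subset\Gamma_n\times\Gamma_n$, invoke the 1-2-3 Theorem for finite presentability of $P_n$ and the Bridson--Grunewald criterion to see that $\hat P_n\to\hat G_n$ is an isomorphism, and distinguish $P_n$ from $G_n$ when $Q_n\neq 1$ by quoting the fact from [BG, Section~6] that certain centralizers in $P_n$ are not finitely presented. That last citation is exactly how the paper disposes of the ``abstract isomorphism'' half, so the part you flag as the main obstacle is in fact the part that is already available in the literature.

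The genuine gap is at the very first step. You delegate to ``a theorem of the Adian--Rabin type'' the existence of a recursive sequence $(Q_n)$, with undecidable triviality, of \emph{perfect} groups; but for the construction to work you need simultaneously (i) $H_1(Q_n;\Z)=H_2(Q_n;\Z)=0$ and $Q_n$ with no proper finite-index subgroups --- both are required for $\hat u_n$ to be an isomorphism, and ``perfect'' alone is far from enough --- and (ii) $Q_n$ of type $F_3$, so that the 1-2-3 Theorem applies and $P_n$ is finitely presentable. No off-the-shelf Adian--Rabin statement delivers all of this uniformly and recursively; producing such a family is the main technical content of the paper (Theorem \ref{t:triv1}): one starts from the Collins--Miller \emph{aspherical} version of Rabin's construction, kills all finite quotients by amalgamating copies of Higman's group along the generators (an operation shown to preserve asphericity and the triviality/nontriviality dichotomy), and then kills $H_2$ by passing to universal central extensions --- which requires a separate algorithm (Corollary \ref{c:algo}) producing a finite presentation of the universal central extension of a finitely presented perfect group, together with the verification that this step preserves ``no finite quotients'', the finiteness properties, and the undecidability of triviality. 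Without this input your reduction does not get off the ground. Two smaller corrections: the ambient group $G_n=\Gamma_n\times\Gamma_n$ necessarily varies with $n$ (the Rips--Wise group is built from the presentation of $Q_n$), so your parenthetical claim of ``a single ambient group $\G$ independent of $n$'' is unsupported (and not needed for the theorem as stated); and $P_n=G_n$ holds iff $Q_n=1$, i.e.\ iff $N=\Gamma_n$, not ``$N=1$''.
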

 
As an illustration of the difficulty of deciding which subgroups of a fixed residually finite
group are profinitely equivalent to the ambient group we prove:

\begin{thmA}\label{t:inj}
There exists a residually finite, finitely presented group $\G$, an
integer $k$, a finitely presented subgroup  $u:P\hookrightarrow\G$ with 
$\hat u:\hat P\to \hat \G$ an isomorphism, and a recursive sequence of 
$k$-generator subgroups $\iota_n: K_n\hookrightarrow \G$ with
$P\subset K_n\subset \G$ such that there is no algorithm that can decide for which
$n$ the map $\hat \iota_n :\hat K_n\to\hat\G$ is an isomorphism.
\end{thmA}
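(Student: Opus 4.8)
\emph{The plan} is to produce the pair $(\G,P)$ by the fibre-product method of Platonov and Tavkin, in the form developed by Bridson and Grunewald in \cite{BG}, but starting from a single seed group with unsolvable word problem, and then to take for the $K_n$ the fibre products over a recursively presented family of quotients of the seed, arranged so that the profinite rigidity of $\iota_n$ is governed by the Schur multiplier of the quotient --- which in turn detects the word problem of the seed.

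First I would fix a finitely presented, torsion-free, super-perfect group $H$ of type $F_3$ that has no non-trivial finite quotients and an unsolvable word problem; such an $H$ is obtained by combining the constructions of \cite{BG} (and of this paper for the class $\mathcal J$) with a Higman-style embedding of a group with unsolvable word problem, and it comes with a recursive sequence of words $v_n$ in its generators for which ``$v_n=1$ in $H$'' is undecidable. Set $Q:=H\ast H\ast H$; this is again finitely presented, super-perfect, of type $F_3$, and without non-trivial finite quotients. Fix an infinite-order element $t$ in the second free factor and a finite normal generating set $h_1,\dots,h_w$ of the third. A residually finite version of the Rips construction now gives a short exact sequence $1\to N\to G\xrightarrow{p}Q\to 1$ with $G$ finitely presented and residually finite and $N$ finitely generated. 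Put $\G:=G\times G$ and $P:=G\times_Q G\le\G$. Then $\G$ is finitely presented and residually finite, $P$ is finitely presented by the 1-2-3 theorem, and the criterion of Platonov and Tavkin (in the form of \cite{BG}), applicable because $Q$ is super-perfect with no finite quotients, shows that the inclusion $u\colon P\hookrightarrow\G$ induces an isomorphism $\hat u\colon\hat P\to\hat\G$.

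Next, for each $n$ I would choose recursively elements $\gamma_n,\delta_1,\dots,\delta_w\in G$ with $p(\gamma_n)=[t,v_n]$ and $p(\delta_i)=h_i$, and set $K_n:=\langle P,(\gamma_n,1),(\delta_1,1),\dots,(\delta_w,1)\rangle\le\G$. This is generated by $k:=d+w+1$ elements, where $d$ is the number of generators in the fixed presentation of $P$, and the generating set is computable from $n$. Computing the image of $K_n$ in $Q\times Q$ under $(p,p)$ identifies $K_n$ with the fibre product $G\times_{Q_n}G$, where $Q_n:=Q/\langle\langle[t,v_n],h_1,\dots,h_w\rangle\rangle=(H\ast H)/\langle\langle[t,v_n]\rangle\rangle$; this is a finitely presented perfect group with no non-trivial finite quotients, it maps onto $H$, and the third-factor generators force $P\subsetneq K_n\subsetneq\G$ for every $n$. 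Writing $Q_n$ as the iterated amalgam $\bigl(H\ast_{\langle v_n\rangle}(\langle v_n\rangle\times\langle t\rangle)\bigr)\ast_{\langle t\rangle}H$ and running Mayer--Vietoris, using $H_1(H)=H_2(H)=0$ --- equivalently, the five-term sequence of $1\to\langle\langle[t,v_n]\rangle\rangle\to Q\to Q_n\to 1$ --- gives $H_2(Q_n;\Z)\cong H_1(\langle v_n\rangle)$, which is $0$ when $v_n=1$ in $H$ and is infinite cyclic otherwise.

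Finally I would invoke the analysis of profinite completions of fibre products from \cite{BG}: since $Q_n$ is finitely presented with no non-trivial finite quotients, $\hat\iota_n\colon\hat K_n\to\hat\G$ is automatically surjective, and it is injective --- hence an isomorphism --- if and only if $H_2(Q_n;\Z)$ has no non-trivial finite quotient, i.e.\ (the multiplier being cyclic here) if and only if $H_2(Q_n;\Z)=0$. With the computation above this says that $\hat\iota_n$ is an isomorphism exactly when $v_n=1$ in $H$, so the unsolvability of the word problem in $H$ along the sequence $(v_n)$ yields the asserted non-existence of an algorithm. I expect the main obstacle to lie in this last step: pinning down that the kernel of $\hat\iota_n$ vanishes precisely when the multiplier of $Q_n$ is profinitely trivial --- the ``only if'' direction, that a non-trivial multiplier really does obstruct injectivity --- with a secondary delicate point being the simultaneous assembly of the seed group $H$ with all of its required properties, where one leans on the machinery of \cite{BG} and of the present paper.
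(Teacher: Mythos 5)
Your construction is genuinely different from the paper's and is viable in outline. The paper builds its $K_n$ as preimages, under a Rips map $\G\to \H$ with $\H$ a free product of Higman groups, of fibre products $\Theta_n\subset\H\times\H$ over the groups $\E_n'$ coming from the Collins--Miller/Rabin machinery; undecidability enters through the triviality problem for the $\E_n'$, and in the decidable-``yes'' case one simply has $K_n=\G\times\G$, so the profinite criterion never has to be verified for a proper subgroup. You instead encode undecidability through the word problem of a single seed $H$ via the relator $[t,v_n]$ in $H\ast H$, which is cleaner in that it bypasses Section 5 of the paper entirely; the price is that in the ``yes'' case ($v_n=1$) your $K_n$ is the fibre product over $Q_n=H\ast H\neq 1$, so you must invoke the Bridson--Grunewald criterion in its general form (their Theorem 5.2: $\G$ finitely generated, $Q_n$ finitely presented with no proper finite-index subgroups and $H_2(Q_n;\Z)=0$), not the packaged version tied to the Rips output. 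That general form does apply here, and your identification of $\langle P,(\gamma_n,1),(\delta_i,1)\rangle$ with $G\times_{Q_n}G$, and your computation $H_2(Q_n;\Z)\cong\Z$ for $v_n\neq 1$, are both correct.

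The genuine gap is the step you yourself flag: the claim that non-vanishing of $H_2(Q_n;\Z)$ \emph{obstructs} injectivity of $\hat\iota_n$. You attribute an ``if and only if'' criterion to \cite{BG}, but \cite{BG} proves only the positive direction ($H_2=0$ implies isomorphism); the converse is not there and cannot simply be invoked. It is true, however, and the missing argument is exactly what the paper supplies via its Lemma \ref{l:mbc}. Concretely: since $Q$ is super-perfect, the five-term exact sequence of $1\to M_n\to Q\to Q_n\to 1$ gives $M_n/[M_n,Q]\cong H_2(Q_n;\Z)$, and one checks directly that the quotient of $K_n$ by the normal closure of $P$ is isomorphic to $M_n/[M_n,Q]$ (the fibre product over $Q_n$ is $P\cdot(p^{-1}(M_n)\times 1)$, and modulo $\langle\!\langle P\rangle\!\rangle$ the elements $(m,1)$ become central and inverse to $(1,m)$). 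Hence for $v_n\neq 1$ the group $K_n$ surjects onto $\Z$ with $P$ in the kernel, and so has non-trivial finite quotients killing $P$. This shows $\hat P\to\hat K_n$ is not surjective; since $\hat P\to\hat\G\times\hat\G$ is an isomorphism and factors through $\hat K_n$, injectivity of $\hat K_n\to\hat\G\times\hat\G$ would force $\hat P\to\hat K_n$ to be surjective, a contradiction. With that paragraph inserted, your proof closes; without it, the theorem's entire content --- that the $\hat\iota_n$ genuinely fail to be isomorphisms along an undecidable set --- is unproved.
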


Our proof of Theorem \ref{t:conj} is based on the constructions
in \cite{BBMS}, \cite{CM} and \cite{BG}, together with some observations
on the nature of universal central extensions (Propositions \ref{p:Gtilde} and \ref{p:WPtilde}). Theorems \ref{t:iso}
and \ref{t:inj} further
require the construction of new families of finitely presented groups,
well adapted for use in the framework of  \cite{BG} but wild enough to
encode undecidable phenomena (see Sections \ref{s:triv} and \ref{s:gen}). 
To this end, in Theorem \ref{t:acyclicFP} we  exhibit a finitely presented acyclic group $\H$ and an integer $k$
such there is no algorithm that can determine which $k$-generator
subgroups of $\H$ are perfect, while in Theorem \ref{t:triv1} we prove  a sharper
version of the following result. Recall that
a group $G$ is termed {\em{super-perfect}} if $H_1(G;\Z)=H_2(G;\Z)=0$.

\begin{thmA}\label{t:triv} Let $\mathcal J$ be the class of super-perfect groups that 
have a compact classifying space and no proper subgroups of finite index. 
There does not exist an algorithm that, given a 
finite presentation of a group $\G$ and a guarantee  that $\G\in\mathcal J$,
can determine whether or not $\G\cong\{1\}$.
\end{thmA}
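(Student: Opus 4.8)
The plan is to reduce the problem to the unsolvability of the triviality problem for finitely presented groups, but to organise the reduction so that every group it produces lies in $\mathcal J$. Thus I would construct a recursive procedure which, on input a finite presentation of a group $Q$ drawn from a suitable recursive family $\{Q_n\}$, outputs a finite presentation of a group $\G(Q)$ with two properties: (i) $\G(Q)\in\mathcal J$ for every $Q$ in the family; and (ii) $\G(Q)\cong\{1\}$ if and only if $Q\cong\{1\}$. The family $\{Q_n\}$ can be chosen so that $\{n:Q_n\cong\{1\}\}$ is not recursive --- this is the Adian--Rabin theorem applied to the Markov property ``trivial'', which in turn rests on the unsolvability of the word problem. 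Then (ii) shows $\{n:\G(Q_n)\cong\{1\}\}$ is not recursive, while (i) shows each input $\G(Q_n)$ satisfies the promised hypothesis $\G\in\mathcal J$; composing the forbidden algorithm with the procedure would decide triviality of the $Q_n$, a contradiction.

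The construction of $\G(Q)$ is the substance of the argument, and I would design it so that four requirements hold simultaneously. First, nontriviality must transfer from $Q$ to $\G(Q)$: I would build $\G(Q)$ so that $Q$ is a quotient of it --- whence $Q\neq 1\Rightarrow\G(Q)\neq 1$ --- while a Rabin-style system of amalgams and HNN extensions over the seed forces $\G(Q)$ to collapse to $\{1\}$ exactly when a distinguished defining word becomes trivial, i.e.\ exactly when $Q\cong\{1\}$. Since $\G(Q)$ must be super-perfect with no finite quotients and its quotient $Q$ inherits both of these properties, I would first pass to a seed family --- with the same non-recursive set of trivial members --- in which every nontrivial $Q_n$ is already perfect with no proper subgroup of finite index; this much is routine. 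Second, ``no proper subgroup of finite index'' for $\G(Q)$ itself I would force by writing the auxiliary part of the presentation in Higman's pattern, making each auxiliary generator conjugate to its own square with enough amalgam structure for Higman's argument to run, since that pattern annihilates every generator in any finite quotient. Third, ``super-perfect with compact classifying space'' I would obtain from a \emph{balanced} presentation --- as many relators as generators --- whose presentation $2$-complex is aspherical: such a complex is a finite, two-dimensional $K(\G(Q),1)$, its Euler characteristic equals $1$, and triviality of $H_1$ then forces $H_2(\G(Q);\Z)=0$. Asphericity I would obtain by a Rips-type small-cancellation asphericalisation of the relator set, balance by pairing each new generator with exactly one new relator as in Higman's group $\langle a_1,\dots,a_m\mid a_{i+1}a_ia_{i+1}^{-1}=a_i^2\rangle$ ($i$ mod $m$), and the homology from a Mayer--Vietoris computation along the graph-of-groups decomposition.

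The principal obstacle is that these requirements pull against one another. Small-cancellation asphericalisation enlarges the relator set and so threatens balance, and hence the vanishing of $H_2$; an amalgam of groups with no finite quotients can nonetheless acquire finite quotients; and any relator added to force collapse when the distinguished word is trivial risks collapsing $\G(Q)$ even when $Q\neq 1$, destroying (ii). I would therefore engineer a single combined graph-of-groups presentation --- vertex groups bespoke Higman-type ``squaring'' pieces glued along free subgroups --- for which one curvature/small-cancellation condition yields asphericity, one Mayer--Vietoris argument yields $H_*(\G(Q))=H_*(\mathrm{pt})$, one Higman-type argument rules out finite quotients, and a normal-form analysis shows that the enforced collapse is calibrated exactly to triviality of the seed, neither coarser nor finer. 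Carrying out that last calibration in concert with the homological and finite-quotient bookkeeping is the delicate step, the one place where the specific combinatorics of the construction must be handled by hand rather than deduced from general principles.
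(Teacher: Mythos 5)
Your overall reduction---a Collins--Miller/Rabin seed family with undecidable triviality, Higman-style amalgamations to kill finite quotients, and asphericity propagated through the graph-of-groups structure---is exactly the skeleton of the paper's proof of Theorem \ref{t:triv1} (via Theorem \ref{t:CM} and Section \ref{ss:noFin}). The gap is precisely in the step you yourself flag as delicate: making the output super-perfect with a compact classifying space. You propose to get $H_2=0$ from a \emph{balanced} aspherical presentation, but the construction you describe cannot produce one. Each copy of Higman's group glued on along a cyclic subgroup contributes four generators but five relators (four Higman relators plus the identification $x_i^{-1}y_i$), and the Rabin-type seed, built from Boone's group by amalgams and HNN extensions over free groups, already has more relators than generators. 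So the aspherical presentation you end up with has $m>n$, and by Lemma \ref{l:H2} this forces $H_2\cong\Z^{m-n}\neq 0$: the group is perfect but provably \emph{not} super-perfect. Nor can a cleverer presentation of the same group rescue this, since for a perfect group the rank of $H_2$ is exactly the excess of relators over generators in any aspherical presentation; a balanced one simply does not exist for these groups. You would need a different mechanism, and none is supplied.

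The missing ingredient is the universal central extension. Given the perfect group $\E_n'$ with no finite quotients and a compact $2$-dimensional classifying space, the paper sets $Q_n=\tilde\E_n'$. Proposition \ref{p:Gtilde} shows that $Q_n$ is automatically super-perfect, inherits the absence of finite quotients, is trivial if and only if $\E_n'$ is, and has a compact classifying space because $H_2(\E_n';\Z)$---the kernel of $\partial_2$ in a $2$-complex---is free abelian of finite rank. Crucially, Miller's algorithm (Proposition \ref{p:miller} and Corollary \ref{c:algo}) produces a finite presentation of $\tilde\E_n'$ uniformly and recursively from that of $\E_n'$, which is what makes the family usable in a reduction. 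Without this step, or a genuine substitute for it, your argument only establishes undecidability of triviality within the class of perfect groups with no proper finite-index subgroups, not within $\mathcal J$.
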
 

The proof of this theorem involves an algorithm that I learnt
from C.F.~ Miller III. Given 
a finite presentation of a perfect group, this algorithm
will output a finite presentation
of the universal central extension of the group;   see Corollary  \ref{c:algo}.

\smallskip

Karl Gruenberg asked me if Theorem \ref{t:iso} might be true 
after a London Algebra Colloquium that I gave in March 2004 on my work with
Fritz Grunewald \cite{BG}. Conversations
with Karl, who died in October 2007, were always stimulating, both mathematically and culturally, and he
is sorely missed.

\section{Background and Preparation}

In this Section we gather five sets of ideas that we will need in 
the proofs of the theorems stated in the introduction.  

\subsection{The Bridson-Grunewald Construction}

Most of the pairs  $u:P\hookrightarrow\G$
that we shall consider, with $\hat u$ an isomorphism,
 derive from the main construction in \cite{BG}. Thus we begin by recalling
 the salient points of that article.
 Recall that a finitely presented group $G$ is said to be of {\em{type $F_3$}}
if there is a CW-complex $K(G,1)$ that has fundamental group $G$,
contractible universal cover, and only finitely many cells in its
3-skeleton. 

\begin{theorem}[Bridson-Grunewald]\label{t:BG}
If $Q$ is a finitely presented group that is infinite but has
no non-trivial finite quotients, and if $H_2(Q,\Z)=0$, then there is a short exact sequence
of groups 
$$1\to N \to \G\overset{p}\to Q\to 1$$
where $\G$ is finitely presented, torsion-free,
hyperbolic and residually finite, 
$N$ is finitely generated
but not finitely presented, the inclusion $u:P\hookrightarrow\G\times\G$
of the  fibre product 
 $$P=\{(\gamma_1,\gamma_2)\mid p(\gamma_1)=p(\gamma_2)\}$$
induces an
isomorphism of profinite completions $\hat P\to \hat\G\times\hat\G$,
but $P$
is not isomorphic to $\G\times\G$.

If $Q$ is of type $F_3$, then $P$ is finitely presented.  
\end{theorem}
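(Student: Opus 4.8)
The plan is to build $\G$ as a hyperbolic group containing a carefully engineered finitely presented (or $F_3$) normal subgroup, and then to exploit the 1-2-3 theorem of Baumslag–Bridson–Miller–Short together with Platonov–Tavgen's profinite criterion. First I would invoke a Rips-type construction: given the finitely presented group $Q$ with $H_2(Q;\Z)=0$ and no finite quotients, there is a short exact sequence $1\to N\to \G\overset{p}{\to} Q\to 1$ with $\G$ word-hyperbolic and $N$ finitely generated. The subtle point is that the standard Rips construction only gives $\G$ hyperbolic with $N$ finitely generated; to get $\G$ \emph{residually finite} and torsion-free one must use the refined version of the construction (due to Wise, using special cube complexes, or an earlier ad hoc device) — this is where most of the input lies, and I would cite it rather than reprove it. One then forms the fibre product $P\le \G\times\G$ over $Q$.

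Next I would run the profinite argument. The key mechanism is the Platonov–Tavgen lemma: if $1\to N\to\G\to Q\to 1$ is exact with $Q$ having no proper finite-index subgroups and $H_2(Q;\Z)=0$, then the inclusion of the fibre product $P=\G\times_Q\G\hookrightarrow \G\times\G$ induces an isomorphism $\hat P\to\hat\G\times\hat\G=\widehat{\G\times\G}$. I would prove this by checking that $\G\times\G$ and $P$ have the same finite quotients: any finite quotient of $\G\times\G$ restricted along $P$ hits a finite index subgroup, and since $Q$ is infinite with no finite quotients, the "diagonal" obstruction measured by $H_2(Q;\Z)$ vanishes, forcing surjectivity; injectivity on profinite completions is automatic from $P$ being a subgroup of finite index in... actually one shows directly that every finite quotient of $P$ factors through $\G\times\G$. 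That $P\not\cong\G\times\G$ follows because $P$ is not finitely presentable in general (Grunberg–type homological obstruction, or the Bieri–Stallings argument that $N$ is not finitely presentable), and in the $F_3$ case one distinguishes them by a different invariant — e.g. $\G\times\G$ has a finite $K(\pi,1)$ with finitely many cells in every dimension while $P$ does not, or via subgroup structure; the quasi-isometry assertion similarly uses that $P$ has a different finiteness/filling behaviour.

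The finite-presentability of $P$ when $Q$ is of type $F_3$ is exactly the 1-2-3 theorem of Baumslag–Bridson–Miller–Short: if $1\to N\to\G\to Q\to 1$ with $N$ finitely generated, $\G$ finitely presented, and $Q$ of type $F_3$, then the fibre product $P$ is finitely presented. I would quote this theorem directly. So the skeleton is: (1) refined Rips construction to get the hyperbolic, residually finite, torsion-free $\G$ with the right quotient $Q$ and f.g. $N$; (2) Bieri–Stallings to see $N$ is not finitely presentable; (3) Platonov–Tavgen to get the profinite isomorphism; (4) 1-2-3 theorem for the finite presentability of $P$ in the $F_3$ case; (5) a separate invariant (finiteness properties or QI rigidity of hyperbolic groups versus $P$) to conclude $P\not\cong\G\times\G$.

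\textbf{Main obstacle.} The hard part is step (1): producing a version of the Rips construction whose output $\G$ is simultaneously hyperbolic, torsion-free, \emph{and residually finite}, since the naive small-cancellation Rips construction gives no control over finite quotients. Getting residual finiteness (ideally together with the group being "good" or at least having enough finite quotients to make the profinite comparison bite) is the technical heart, and I would lean on the special-cube-complex machinery for this.
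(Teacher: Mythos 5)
The paper does not reprove this theorem: it is quoted from \cite{BG}, with the text attributing the short exact sequence to the Rips--Wise construction (Theorem \ref{t:rips}, proved via small cancellation theory) and the final sentence to the 1-2-3 Theorem of \cite{BBMS}. Your skeleton --- refined Rips construction, a Platonov--Tavgen-type profinite criterion, the 1-2-3 Theorem, plus a separate invariant for non-isomorphism --- is exactly the architecture of \cite{BG}, so at that level you have the right proof. Two points in your sketch need repair, though. First, in the profinite step you have the roles of the two hypotheses essentially reversed: it is the absence of finite quotients of $Q$ that forces surjectivity of $\hat P\to\widehat{\G\times\G}$ (the image of $N\times N\subset P$ in any finite quotient of $\G\times\G$ is already everything, because $\G/N\cong Q$ has no non-trivial finite quotients), while $H_2(Q;\Z)=0$ is what is needed for injectivity, i.e.\ to show that every finite-index subgroup of $P$ contains a finite-index subgroup of $\G\times\G$. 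Second, your primary device for $P\not\cong\G\times\G$ --- that $P$ is not finitely presentable --- fails precisely in the case the theorem emphasises ($Q$ of type $F_3$, where $P$ \emph{is} finitely presented); the argument actually used in \cite{BG}, and invoked later in this paper in the proof of Theorem \ref{t:iso1}, is that certain centralizers in $P$ are not finitely presented, whereas centralizers in the torsion-free hyperbolic factors of $\G\times\G$ are cyclic and the corresponding centralizers in $\G\times\G$ are finitely presented. You gesture at ``subgroup structure'' as a fallback, but you would need to commit to that route for the theorem as stated. (A minor historical point: Wise's residually finite Rips construction used here is a small-cancellation argument, not the later special cube complex machinery.)
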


The assertion in the final sentence of this theorem is a special
case of the 1-2-3 Theorem of Baumslag, Bridson, Miller and
Short  \cite{BBMS}.  
The short exact sequence in the first sentence comes
from Wise's variation
on the Rips construction (see \cite{rips}, \cite{wise}), which is proved
using small cancellation theory. See Section 7 of \cite{BG} for a 
demonstration of the explicit nature of this construction.  

\begin{theorem}[Rips-Wise]\label{t:rips} There is an algorithm that
associates to every finite group-presentation $\mathcal Q\equiv \< X\mid R\>$ a finite
presentation $\mathcal G\equiv \<\cech X\mid \cech R\>$ with $\cech X = X\cup\{a_1,a_2,a_3\}$ and $|\cech R|= |R| + 6|X|$,
such that the group $\G$ with presentation $\mathcal G$ is residually finite, torsion-free, hyperbolic (in the sense of Gromov) and
has a compact 2-dimensional classifying space. The subgroup $N\subset\G$ generated by $\{a_1,a_2,a_3\}$
is normal and there is a short exact sequence
$$1\to N\to \G \overset{p}\to Q\to 1$$ defined by  $p(x)=x$ for all $x\in X$. 
\end{theorem}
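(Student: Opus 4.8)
\smallskip

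The plan is to run Rips's small-cancellation construction \cite{rips} in the residually-finite refinement of Wise \cite{wise}, while keeping track of the bookkeeping. Given $\mathcal Q\equiv\<X\mid R\>$, one adjoins three new generators $a_1,a_2,a_3$ and sets $\cech X=X\cup\{a_1,a_2,a_3\}$, taking $\cech R$ to consist of one relator $r\,U_r\inv$ for each $r\in R$, together with the two relators $x\,a_i\,x\inv\,W_{i,x}\inv$ and $x\inv a_i\,x\,V_{i,x}\inv$ for each $x\in X$ and each $i\in\{1,2,3\}$, where $U_r,W_{i,x},V_{i,x}$ are positive words in $a_1,a_2,a_3$ still to be specified. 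This already yields $|\cech R|=|R|+6|X|$, and the recipe is manifestly algorithmic once the auxiliary words are chosen effectively. For \emph{any} choice of those words, the assignment $x\mapsto x$, $a_i\mapsto 1$ extends to an epimorphism $p\colon\G\to Q$ (each relator of $\mathcal G$ maps to a defining relator of $\mathcal Q$ or to the empty word), and the two families of conjugation relators show that $\<a_1,a_2,a_3\>$ is normalised by every $x$; hence $N:=\<a_1,a_2,a_3\>$ is normal with $\G/N\cong Q$, which is the asserted short exact sequence $1\to N\to\G\overset{p}\to Q\to 1$.

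Next I would fix the auxiliary words. One selects $U_r,W_{i,x},V_{i,x}$ effectively — for instance by cutting a fixed, sufficiently generic infinite word in $a_1,a_2,a_3$ into pairwise different blocks of rapidly increasing length — so that the symmetrised closure of $\cech R$ satisfies the metric small-cancellation condition $C'(1/6)$, with no relator a proper power and no relator a cyclic conjugate of another relator or of an inverse of one. Granting this, standard small-cancellation theory delivers most of the stated properties: Greendlinger's lemma gives Dehn's algorithm and hence a linear isoperimetric inequality, so $\G$ is word-hyperbolic; the presentation $2$-complex of a $C'(1/6)$ presentation with no relator a proper power is aspherical, so it is a compact $2$-dimensional $K(\G,1)$; and a group with a finite-dimensional classifying space has no torsion. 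Since the generator and relator counts are independent of the particular words chosen, the bookkeeping of the first paragraph is untouched.

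The one substantial point is residual finiteness: the groups produced by Rips's original recipe are not known to be residually finite, and securing it is exactly Wise's contribution \cite{wise}. I would invoke it as follows: the words $U_r,W_{i,x},V_{i,x}$ can be chosen, subject also to the small-cancellation conditions above, so that the presentation $2$-complex of $\mathcal G$ carries a suitable system of immersed walls; $\G$ then acts properly and cocompactly on a CAT(0) cube complex, and the cubulation can be taken special, so that $\G$ embeds in a right-angled Artin group and is residually finite (equivalently, one runs Wise's original, more hands-on argument placing the small-cancellation group directly inside a graph group). The delicate part of the whole proof is therefore the compatibility step — producing a single effective choice of $U_r,W_{i,x},V_{i,x}$ that simultaneously enforces $C'(1/6)$, the absence of proper powers, and the wall structure, all without pushing $|\cech R|$ past $|R|+6|X|$. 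With such a choice in hand, hyperbolicity, the compact $2$-dimensional $K(\G,1)$, torsion-freeness, residual finiteness, normality of $N$ and the exact sequence all follow as indicated above, and since every choice is effective the assignment $\mathcal Q\mapsto\mathcal G$ is an algorithm.
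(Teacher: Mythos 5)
The paper offers no proof of this statement --- it is quoted from Rips \cite{rips} and Wise \cite{wise}, with Section 7 of \cite{BG} cited for its explicit form --- and your sketch reconstructs precisely the construction those sources carry out: the relator count $|R|+6|X|$, the normality of $N=\langle a_1,a_2,a_3\rangle$, the exact sequence, and the derivation of hyperbolicity, the compact $2$-dimensional $K(\G,1)$ and torsion-freeness from a $C'(1/6)$, proper-power-free choice of the auxiliary words are all exactly as in the cited construction. The one point to flag is your description of the residual-finiteness step: Wise's 2003 argument does not place the group inside a graph group, nor does it cubulate the presentation complex with walls (that technology postdates it); rather, he chooses the words so that $\G$ sits inside a negatively curved polygon of finite groups and invokes his earlier residual-finiteness theorem for such polygons. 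The alternative route you describe --- cubulate the $C'(1/6)$ group and use that hyperbolic cubulated groups are \emph{virtually} (not literally) special, hence embed virtually in right-angled Artin groups and are residually finite --- is also valid but rests on later results of Wise and Agol. Either way the crux is correctly identified and correctly delegated to the literature, so the proposal is sound and matches the construction the paper relies on.
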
   

In order to make Theorem \ref{t:BG} useful, one needs a plentiful supply of groups
of type $F_3$ that are super-perfect and have no finite quotients. As explained
in Section 8 of \cite{BG}, one can obtain such a supply by proceeding
as follows. Following the constructions in \cite{mb-ep}, one can
embed any group $G$ of type $F_3$ in a group $G'$
 of type $F_3$ that has no non-trivial
finite quotients; one does this by forming suitable free products and HNN extensions. One 
then obtains a group $Q$ as needed in Theorem \ref{t:BG} by forming the 
universal central extension of $G'$. The constructions at the heart of the present article
implement this basic procedure in an algorithmic manner, using carefully chosen input groups $G$.

\subsection{Universal central extensions}\label{ss:H_2G}

The standard reference for universal central extensions is  \cite{milnor}, pp. 43-47.

A central extension of a group $G$ is a group $\tilde G$ equipped
with a  homomorphism $\pi:\tilde G\to G$ whose kernel is central
in $\tilde G$. Such an extension is {\em{universal}} if given
any other central extension $\pi':E\to G$ of $G$, there is a
unique homomorphism $f:\tilde G\to E$ such that $\pi'\circ f =\pi$.

The following proposition
summarizes those properties of universal central extensions 
that we need.

\begin{prop}\label{p:Gtilde}

\begin{enumerate}
\item
$G$ has a universal central extension $\tilde G\to G$  if and only if 
$G$ is perfect. (If it exists, $\tilde G\to G$ is unique up to isomorphism over $G$.)
\item
If $G$ is expressed as a quotient of a free group
$G=F/R$, then the natural map $[F,F]/[F,R]\to G$ is the universal
central extension of $G$. 
\item
The kernel of the map $[F,F]/[F,R]\to G$ is $R\cap [F,F]/[F,R]$,
which according to Hopf's formula is $H_2(G;\Z)$. Thus the universal
central extension of a super-perfect group $G$ is ${\rm{id}}:G\to G$. 
\item
If $\tilde G\to G$ is a universal central extension, then $\tilde G$
is super-perfect.
\item If $G$ has no non-trivial finite quotients then neither does $\tilde G$.
\item If $G$ is finitely presented then so is $\tilde G$.
\item If $G$ is of type $F_3$ then so is $\tilde G$.
\item If $G$ has a 2-dimensional classifying space $K(G,1)$ then
$\tilde G$ is torsion-free and has a compact classifying space.
\end{enumerate}
\end{prop}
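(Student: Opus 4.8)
The plan is to establish the eight assertions in sequence, using at each stage the concrete model $\tilde G = [F,F]/[F,R]$ supplied by (2), so that all later statements become manipulations of this explicit group. For (1), the existence half is the classical construction: if $G$ is perfect one checks that $[F,F]/[F,R]$ (with the map induced by $F\twoheadrightarrow G$) is a central extension, that its kernel lies in the commutator subgroup, and that it maps uniquely to any other central extension — the uniqueness of the lift is where perfectness of $\tilde G$ is used, so it is cleanest to prove (4) en route, or at least the fact that $\tilde G$ is perfect. Conversely, if $G$ is not perfect one produces two distinct lifts to the trivial central extension $G\times (G/[G,G])\to G$, so no universal object exists; uniqueness up to isomorphism over $G$ is the standard diagram chase with two universal extensions mapping to each other. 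For (2), one verifies directly that $[F,R]\subset [F,F]$ is normal in $F$, that $[F,F]/[F,R]\to G$ has central kernel $R\cap[F,F]/[F,R]$, and then checks the universal property by hand: any central extension $E\to G$ pulls back along $F\twoheadrightarrow G$ to give a map $F\to E$ that kills $[F,R]$ (because the kernel is central) and whose restriction to $[F,F]$ gives the required unique lift.

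Granting (2), items (3)–(5) are short. Statement (3) is immediate from the identification of the kernel as $R\cap[F,F]/[F,R]$ together with Hopf's formula $H_2(G;\Z)\cong (R\cap[F,F])/[F,R]$; when $H_1=H_2=0$ the kernel is trivial and $G$ is perfect, so $\tilde G = G$ with the identity map. For (4): $\tilde G = [F,F]/[F,R]$ is visibly a quotient of $[F,F]$, hence perfect, i.e.\ $H_1(\tilde G;\Z)=0$; and applying Hopf's formula to the presentation $\tilde G = [F,F]/[F,R]$ — more precisely, to a free group surjecting onto $\tilde G$ — shows $H_2(\tilde G;\Z)=0$, the standard fact that a universal central extension is its own universal central extension. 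For (5): a finite quotient of $\tilde G$ is a finite perfect group (being a quotient of a perfect group), and pushing forward the central extension would give a finite quotient of $G$ with perfect kernel; since $G$ has no nontrivial finite quotients the image in $G$ is trivial, so the finite quotient of $\tilde G$ is itself a finite central (hence nilpotent-by-nothing) extension of $\{1\}$, i.e.\ abelian, but also perfect, hence trivial.

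The finiteness and topological assertions (6)–(8) are where the real work lies, and (7) is the step I expect to be the main obstacle. For (6): if $G$ is finitely presented, take a finite presentation $G = \langle x_1,\dots,x_n\mid r_1,\dots,r_m\rangle$; then $\tilde G$ is generated by the (finitely many) commutators $[x_i,x_j]$, and Hopf's formula together with $H_2(G;\Z)$ being finitely generated (true for any finitely presented group) shows the kernel of $[F,F]/[F,R]\to \tilde G'$ is finitely generated as a normal subgroup, yielding a finite presentation — this is essentially Corollary \ref{c:algo} referred to in the text, Miller's algorithm. For (7), the cleanest route is topological: a group of type $F_3$ has a $K(G,1)$ with finite $3$-skeleton, so $H_i(G;\Z)$ is finitely generated for $i\le 3$; one then builds a $K(\tilde G,1)$ from that of $G$ by attaching cells to kill $\pi_1$ down to $\tilde G$ and realizing the central extension, controlling the number of cells in dimensions $\le 3$ using finite generation of $H_2(G;\Z)$ and $H_3(G;\Z)$ — the subtlety is that passing to the central extension can a priori change $H_3$, so one must check the relevant part of the Lyndon–Hochschild–Serre spectral sequence for $1\to H_2(G;\Z)\to\tilde G\to G\to 1$ stays finitely generated, which it does because the kernel is a finitely generated abelian group. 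Finally (8): if $G$ has a $2$-dimensional $K(G,1)$ then $G$ is torsion-free of cohomological dimension $\le 2$, so $H_i(G;\Z)=0$ for $i\ge 3$; in particular $H_3(G;\Z)=0$, and one shows $\tilde G$ has cohomological dimension $\le 2$ as well — the central subgroup $H_2(G;\Z)$ is free abelian (a subgroup of the free abelian $H_2$ of a $2$-complex, or killed entirely if $G$ is already super-perfect), and an extension of a $\mathrm{cd}\le 2$ group by $\Z^k$ with $k\le 2$... here one instead argues directly that the central extension $\tilde G\to G$ corresponds to a class in $H^2(G;\Z^k)$ and builds the classifying space as a $\Z^k$-bundle-type construction over $K(G,1)$, which has a finite model of dimension $2 + 1 = 3$; torsion-freeness of $\tilde G$ follows since it is an extension of a torsion-free group by a torsion-free group. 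One should double-check the dimension count in (8) against the paper's later use, since "compact classifying space" (not necessarily $2$-dimensional) is all that is claimed.
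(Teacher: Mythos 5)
Your treatment of items (1)--(5) and (8) is essentially the paper's: for (5), a nontrivial finite quotient of the perfect group $\tilde G$ would have to be non-abelian, and killing the (central) image of $\ker(\tilde G\to G)$ would produce a nontrivial finite quotient of $G$; for (8), $H_2(G;\Z)$ is the kernel of $\partial_2$ in the cellular chain complex of the $2$-dimensional $K(G,1)$, hence free abelian of finite rank, and one concludes via the extension $1\to H_2(G;\Z)\to\tilde G\to G\to 1$. (Your dimension count ``$2+1=3$'' should be $2+k$, but only compactness is claimed, so this is harmless.)

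There are, however, two genuine problems. First, in (6) the claim that $\tilde G$ is generated by the commutators $[x_i,x_j]$ is false: already for a $2$-generator presentation of a perfect group such as $A_5$, the element $[x_1,x_2]$ generates only a cyclic subgroup of $\tilde G$. What is true (and is the content of Proposition \ref{p:miller}) is that $\tilde G=[F,F]/[F,R]$ is generated by the images of the elements $c_i^{-1}$, where $c_i\in[F,F]$ is chosen with $x_ic_i\in R$ (such $c_i$ exist because $G$ is perfect): these images map onto the generators $x_i$ of $G$, so together with the central kernel they generate $\tilde G$, and since $\tilde G$ is perfect the central kernel is then redundant. Second, and more seriously, your argument for (7) does not work as described. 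Since $\tilde G$ surjects onto $G$, one cannot obtain a $K(\tilde G,1)$ from a $K(G,1)$ by ``attaching cells to kill $\pi_1$ down to $\tilde G$''; and controlling finite generation of $H_2$ and $H_3$ via the Lyndon--Hochschild--Serre spectral sequence is not sufficient, because finite generation of integral homology in degrees $\le 3$ does not characterize type $F_3$. The clean argument, which the paper uses to dispatch (6) and (7) simultaneously, is this: the kernel of $\tilde G\to G$ is the finitely generated abelian group $H_2(G;\Z)$, hence of type $F_\infty$, and if the kernel and quotient in a short exact sequence are of type $F_n$ then so is the middle group (Geoghegan, Section 7.1) --- realized topologically by the fibration $K(H_2(G;\Z),1)\to E\to K(G,1)$, whose total space is aspherical with fundamental group $\tilde G$ and inherits a finite $3$-skeleton. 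You have all the ingredients for this, but the construction you describe runs in the wrong direction.
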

 
 \begin{proof}The first four facts are standard; see \cite{milnor}, pp.~43-47.
Point (5) is proved by noting that  since
$\tilde G$ is perfect, a non-trivial finite quotient would have to be non-abelian, and factoring out
the centre would give a non-trivial quotient of $G$.

For items (6) and (7) we consider the short exact
sequence furnished by (2) and (3):
$$
1\to H_2(G;\Z) \to \tilde G \to G\to 1.
$$
If $G$ is finitely presented then $H_2(G;\Z)$
is finitely generated.
Finitely generated abelian groups have classifying spaces with
finitely many cells in each dimension, so in particular they are
of type $F_3$. For any $n$, if 
the first and last groups in a short exact sequence are of type
$F_n$ then so is the group in the middle (and likewise,
if they have compact classifying spaces then so does the group
in the middle) --- see \cite{ross}, Section 7.1.

Turning to (8), recall  that if a group has
a finite-dimensional classifying space, it is torsion-free (Remark \ref{tf}). Thus,
by consideration of the short exact sequence above, it suffices
to argue that $H_2(G;\Z)$ is torsion-free (in which case it has a compact
classifying space, namely a torus of the appropriate dimension).
Now, $H_2(G;\Z)$ is isomorphic
to the second homology group of the given $K=K(G,1)$, which is
assumed to have no 3-cells. Thus $H_2(G;\Z)$ is isomorphic to
the kernel of the second boundary map in the cellular chain complex of $K$;
in particular it is a free abelian group.
\end{proof}

\begin{remark}\label{tf} If a group $G$ has a finite dimensional $K(G,1)$
then $G$ must be torsion-free for otherwise it would contain an
element of prime order, $g$ say, and the quotient of $\tilde K$ by 
$\langle g\rangle \cong\Z_p$ would be a finite-dimensional $K(\Z_p,1)$,
contradicting the fact that $\Z_p$ has cohomology in infinitely
many dimensions.
\end{remark}

We shall also need to control the complexity of the word problem when
we pass to universal central extensions. This relies on the simple observation:

\begin{lemma}\label{l:Zfg} If $G$ is a finitely generated, recursively presented group 
and $H\subset G$ is a finitely generated subgroup with a solvable word problem, then
there is an algorithm that, given a word $w$
in the generators of $G$ and a guarantee that $w\in H$,  can determine whether or not $w=1$ in $G$.
\end{lemma}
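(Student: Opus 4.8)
The plan is to use the fact that a recursively presented group has a recursively enumerable word problem, and to combine this with the guarantee $w\in H$ to reduce the question ``is $w=1$ in $G$?'' to the word problem of $H$. Fix the given finite generating set $x_1,\dots,x_n$ of $G$ and the given finite generating set $h_1,\dots,h_m$ of $H$, recording each $h_i$ as a fixed word $\bar h_i$ in the $x_j^{\pm1}$. Since $G$ is recursively presented, the set $W_0=\{v : v=1\text{ in }G\}$ of words in the $x_j^{\pm1}$ representing the identity is recursively enumerable, so fix an enumeration procedure for it. Also fix an enumeration $u_1,u_2,\dots$ of all words in the letters $h_i^{\pm1}$; substituting $\bar h_i$ for $h_i$ rewrites each $u_k$ as a word $\bar u_k$ in the $x_j^{\pm1}$.

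The algorithm is as follows. On input $w$ (a word in the $x_j^{\pm1}$, with the guarantee that $w$ represents an element of $H$), dovetail over $k$ and over steps of the enumeration of $W_0$, searching for an index $k$ and a certificate that $\bar u_k\,w^{-1}\in W_0$, i.e.\ that $\bar u_k=w$ in $G$. Because $w\in H$, some word in the $h_i^{\pm1}$ represents $w$, hence some $\bar u_k\,w^{-1}$ lies in $W_0$ and is eventually certified; so the search halts, returning an index $k_0$ with $\bar u_{k_0}=w$ in $G$. Now apply the assumed solution to the word problem of $H$ to decide whether $u_{k_0}=1$ in $H$, and output ``$w=1$ in $G$'' precisely in that case.

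Correctness is immediate: since $\bar u_{k_0}=w$ in $G$, we have $w=1$ in $G$ if and only if $\bar u_{k_0}=1$ in $G$; and since the inclusion $H\hookrightarrow G$ is injective, $\bar u_{k_0}=1$ in $G$ if and only if $u_{k_0}=1$ in $H$, which the word problem for $H$ decides.

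The one point requiring care — and the place where the hypotheses are genuinely used — is the termination of the dovetailed search. This rests on two things: the guarantee $w\in H$, which ensures that a suitable $u_{k_0}$ exists at all, and the recursive enumerability of the word problem of $G$, which lets the identity $\bar u_{k_0}=w$ in $G$ be verified in finite time. No uniform bound on $k_0$ or on the verification time is needed, because the algorithm is never required to recognise that $w\notin H$; this is exactly why the ``guarantee'' in the statement cannot be dropped.
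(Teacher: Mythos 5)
Your proof is correct and follows essentially the same route as the paper: the paper's ``na\"{\i}ve search on products of conjugates of the defining relations'' to certify $P=w\pi$ in $F(X)$ is exactly your recursive enumeration of the words trivial in $G$, and both arguments then hand the rewritten word to the solvable word problem of $H$. Your closing remark about why termination needs the guarantee $w\in H$ matches the role that hypothesis plays in the paper's argument.
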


\begin{proof} Fix finite sets of generators $A$ for $H$ and $X$ for $G$, and for each $a\in A$ fix a word
$u_a$ in $X^{\pm 1}$ such that $a=u_a$ in $G$. Given a word $w$ in the letters $X^{\pm 1}$ with
the promise that it defines an element of $H$, one runs through
products $\pi$ of the words $u_a$ and their inverses,
doing a na\"{\i}ve search on products $P$ of conjugates of the defining relations of $G$ to check
 if $P$ is equal to $w\pi$ in the free group $F(X)$. Running through all possibilities for $\pi$ and
$P$ along finite diagonals, one will eventually find a valid formula, proving that $w$ is equal in $G$ to
a certain word in the letters $A^{\pm 1}$. One can then use the solution to the word problem in
$H$ to decide whether or not $w=1$ in $G$.
\end{proof}

\begin{prop}\label{p:WPtilde}
If $G$ is a finitely presented perfect group whose centre is finitely generated,
then the word problem in $G$ is solvable if and only if the word problem in 
its universal central extension $\tilde G$
is solvable.
\end{prop}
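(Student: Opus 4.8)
The plan is to exploit the short exact sequence $1\to H_2(G;\Z)\to \tilde G\to G\to 1$ supplied by parts (2) and (3) of Proposition \ref{p:Gtilde}, together with the hypothesis that $H_2(G;\Z)$ is finitely generated (this follows because $G$ is finitely presented, and in any case the centre of $\tilde G$ is finitely generated by hypothesis — recall $H_2(G;\Z)$ maps onto the centre of $\tilde G$ since $\tilde G$ is perfect). Since $\tilde G$ is finitely presented (Proposition \ref{p:Gtilde}(6)), and $G=\tilde G/Z$ with $Z=H_2(G;\Z)$ a finitely generated central subgroup, both implications will reduce to standard facts about central extensions with finitely generated abelian kernel, with Lemma \ref{l:Zfg} doing the key work in one direction.

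First I would treat the easy direction: if the word problem in $\tilde G$ is solvable, then so is the word problem in $G$. Fix a presentation $G=F/R$ with $F=F(X)$ free of finite rank, so by Proposition \ref{p:Gtilde}(2) we may take $\tilde G = [F,F]/[F,R]$, and note that each generator $x\in X$ of $G$ lifts to an explicit element $\tilde x$ of $\tilde G$ (since $\tilde G\to G$ is surjective, one may choose such lifts, and for a perfect group presented on $X$ one can in fact write each $x$ as a product of commutators, hence lift canonically). Given a word $w(X)$ representing $1$ in $G$, it represents an element of the central kernel $Z\cong H_2(G;\Z)$ inside $\tilde G$; but a word represents $1$ in $G$ iff its canonical lift lies in $Z$, and this can be checked as follows: $w=1$ in $G$ precisely when the lifted word is trivial after killing $Z$, i.e. when $w=1$ in $\tilde G/Z$. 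Concretely, solve the word problem in $\tilde G$ to express the lifted word in normal form; since we can enumerate words in $Z$ (a finitely generated abelian group, which has solvable word problem), comparing the lift of $w$ against this enumeration using the solution to the word problem in $\tilde G$ decides whether $w\in Z$, equivalently whether $w=1$ in $G$. (More simply: the obvious surjection $\tilde G\to G$ sends the presentation of $\tilde G$ to a presentation of $G$, and solvability of the word problem passes to quotients by a recursively enumerable set of added relators — here the finitely many generators of $Z$ — once we know the word problem in $\tilde G$ and can recognise membership in $Z$.)

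For the converse — if the word problem in $G$ is solvable then so is the word problem in $\tilde G$ — the strategy is to apply Lemma \ref{l:Zfg} with the ambient group $\tilde G$ (finitely presented, hence recursively presented) and the finitely generated central subgroup $Z=H_2(G;\Z)\subset\tilde G$, which being finitely generated abelian has solvable word problem. Lemma \ref{l:Zfg} then gives an algorithm that, for a word $w$ in the generators of $\tilde G$ with the promise $w\in Z$, decides whether $w=1$ in $\tilde G$. It remains to discharge this promise: given an arbitrary word $w$ in the generators of $\tilde G$, push it forward to a word $\bar w$ in the generators of $G$ under $\tilde G\to G$; using the solution to the word problem in $G$, decide whether $\bar w=1$ in $G$. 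If $\bar w\neq 1$ in $G$ then $w\neq 1$ in $\tilde G$. If $\bar w=1$ in $G$ then $w$ lies in the kernel $Z$, the promise of Lemma \ref{l:Zfg} is met, and we may run that algorithm to decide whether $w=1$ in $\tilde G$.

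The main obstacle — such as it is — is the bookkeeping in the second direction: one must be careful that the map $\tilde G\to G$ is genuinely algorithmic on the level of chosen generating sets (it is: it sends generators to generators under the presentation of Proposition \ref{p:Gtilde}(2)), and that $Z=H_2(G;\Z)$, while we may not know it explicitly, is nonetheless a concrete finitely generated subgroup on which Lemma \ref{l:Zfg} can be invoked — we only need a finite generating set for $Z$, which we obtain from the finitely many relators $R$ that present $G$ (the images in $[F,F]/[F,R]$ of the elements of $R$, intersected with $[F,F]$, generate the kernel). Once these points are in place, both implications are immediate, and no small-cancellation or geometric input beyond Proposition \ref{p:Gtilde} and Lemma \ref{l:Zfg} is needed.
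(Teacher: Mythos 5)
Your second direction (word problem in $G$ solvable implies word problem in $\tilde G$ solvable) is exactly the paper's argument: push a word $W$ forward to $G$, test it there with the given algorithm, and if it dies invoke Lemma \ref{l:Zfg} with $H$ the finitely generated central kernel. That half is fine.

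The first direction has a genuine gap. You must decide, for a word $w$ in the generators of $G$ with lift $\tilde w\in\tilde G$, whether $\tilde w$ lies in the kernel $Z\cong H_2(G;\Z)$. Your procedure --- enumerate the elements of $Z$ as words in its generators and compare each against $\tilde w$ using the solution to the word problem in $\tilde G$ --- is only a semi-decision: it halts (with ``yes'') when $\tilde w\in Z$, but when $\tilde w\notin Z$, i.e.\ precisely when $w\neq 1$ in $G$, it runs forever, and that is the case you need to detect. Recognising membership in a finitely generated subgroup of a group with solvable word problem is the generalized word problem, which is not solvable in general; your parenthetical remark that solvability of the word problem ``passes to quotients by a recursively enumerable set of added relators \dots\ once we \dots\ can recognise membership in $Z$'' begs exactly this question, since the normal closure of the added relators is $Z$ itself. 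A telltale sign of the gap is that your argument never uses the hypothesis that the centre of $G$ is finitely generated. The paper uses it at precisely this point: given $\tilde w$, first test --- decidably, by checking the finitely many commutators of $\tilde w$ with the generators of $\tilde G$ --- whether $\tilde w$ is central in $\tilde G$. If it is not central then $\tilde w\notin Z$ (as $Z$ is central), so $w\neq 1$ in $G$. If it is central, then $w$ is central in $G$, and one applies Lemma \ref{l:Zfg} inside $G$ with $H=Z(G)$, which is finitely generated abelian by hypothesis and hence has solvable word problem, to decide whether $w=1$ in $G$. (A minor further slip: $H_2(G;\Z)$ does not in general map onto the centre of $\tilde G$; rather $Z(\tilde G)$ is an extension of $Z(G)$ by $H_2(G;\Z)$, which is why the finite generation of $Z(G)$ enters.)
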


\begin{proof} 
We fix  a finite generating set $X$ for $G$, choose a preimage $\tilde x\in \tilde G$
for each $x\in X$, and fix a finite generating set $Z$ for the centre of $\tilde G$  that
includes a finite generating set $Z'$ for the kernel of $\tilde G\to G$. Note that the
centre of $\tilde G$ is finitely generated because it is an extension of the centre of $G$
by $H_2(G;\Z)$. Let $\tilde X = \{\tilde x : x\in X\}$ and note that $\tilde X\cup Z'$ generates $\tilde G$.

First suppose that $\tilde G$ has a solvable word problem.
Given a word $w$ in the letters $X^{\pm 1}$, we replace each occurrence in $w$ of $x\in X$ by
$\tilde x$ and ask if the resulting word $\tilde w$ defines a central element of $\tilde G$.
The hypothesized solution to the word problem in $\tilde G$ allows us to decide this because
it is enough to check if $[\tilde w, y] =1$ for all $y\in \tilde X\cup Z'$. If $\tilde w$ is not central,
we stop and declare that $w\neq 1$ in $G$. If  $\tilde w$ is central in $\tilde G$ then
$w$ is central in $G$, and as in Lemma \ref{l:Zfg} this enables us to
decide whether or not $w=1$ in $G$.

Now suppose that $G$ has a solvable word problem. Given a word $W$
in the letters $(\tilde X\cup Z')^{\pm 1}$, we consider the word $w$ obtained
by deleting all occurrences of all letters $z\in Z'$. If $w\neq 1$ in $G$, then we
declare $W\neq 1$ in $\tilde G$. If $w=1$ then we know $W$ is central in $\tilde G$
and hence we can use Lemma \ref{l:Zfg} to determine whether or not $W=1$.
\end{proof}

\subsection{Aspherical presentations}\label{ss:ass}
We remind the reader that a presentation of a group $G$ is termed
{\em{aspherical}} if the standard 2-complex of the presentation is
a $K(G,1)$, that is, the
universal covering  is contractible. For example, a free presentation
of a free group is aspherical.  

It is straightforward to prove that 
if $\P_1\equiv\<\A_1\mid\R_1\>$ and $\P_2\equiv\<\A_2\mid\R_2\>$ are aspherical presentations of groups
$G_1$ and $G_2$, then the natural presentations
$\< \A_1\sqcup\A_2\mid \R_1\sqcup\R_2,\,u_iv^{-1}\, (i=1,\dots,n)\>$ of any amalgamated free product of the form $G_1\ast_F G_2$ with $F$
free of rank $n$, is also aspherical. Likewise, the natural presentations
of HNN extensions of the form $G_1\ast_F$ will be aspherical.

\subsection{Higman's group}\label{ss:higman} The following group
constructed by Graham Higman \cite{hig} will provide useful
input to certain of our constructions.
$$
J=\langle a,b,c,d \mid aba^{-1}=b^2,\, bcb^{-1}=c^2,\, 
cdc^{-1}=d^2,\, dad^{-1}=a^2\rangle.
$$ 
The salient features of $J$ are described in the following proposition,
in which $D$ is the
group with presentation
$D= \langle \a,\b,\c \mid \a\b\a^{-1}=\b^2,\, \b\c\b^{-1}=\c^2\>$.

\begin{prop}\label{p:hig}
\begin{enumerate}
\item $J$ has no non-trivial finite quotients.
\item The given presentation of $J$ is aspherical.
\item $J\cong D_1\ast_{F_2} D_1$, where $D_1\cong D_2\cong D$
and the amalgamation identifies the subgroups $\langle \a_i,\c_i\>
\subset D_i$  
 by $\a_1=\c_2$ and $\c_1=\a_2$.
  \item The abelianization of $\< a,b,c\>\subset J$ is infinite cyclic,
 generated by the image of $a$. 
\item $H_i(J;\Z)=0$ for all $i\ge 1$.
\end{enumerate}
 \end{prop}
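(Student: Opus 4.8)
The plan is to prove the amalgam decomposition (3) first --- this is where I expect the real work to be --- and to read off (2), (4) and (5) from it; assertion (1) I would handle independently. \textbf{The splitting (3).} Set $a=\a_1,\ b=\b_1,\ c=\c_1=\a_2,\ d=\b_2$ and $\c_2=a$. Then the two relators of $D_1$ become $aba^{-1}b^{-2}$ and $bcb^{-1}c^{-2}$, the two relators of $D_2$ become $cdc^{-1}d^{-2}$ and $dad^{-1}a^{-2}$, and the prescribed identifications $\a_1=\c_2,\ \c_1=\a_2$ are built in; so the displayed presentation of $J$ is precisely the natural presentation of the amalgamated product $D_1\ast_{F_2}D_2$ along the stated pairing of the subgroups $\langle\a_i,\c_i\rangle$. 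For this to be an honest amalgam one must know that $\langle\a,\c\rangle\subset D$ is \emph{free of rank $2$}, and this is the substantial point. I would prove it by first exhibiting $D$ itself as an amalgam $D=B_1\ast_{\langle\b\rangle}B_2$, where $B_1=\langle\a,\b\mid\a\b\a^{-1}=\b^2\rangle$ and $B_2=\langle\b,\c\mid\b\c\b^{-1}=\c^2\rangle$ are two copies of the Baumslag--Solitar group $\mathrm{BS}(1,2)$ glued along $\langle\b\rangle\cong\Z$ (the element $\b$ having infinite order in each $B_i$). On the Bass--Serre tree $T$ of this splitting let $v_1,v_2$ be the vertices stabilised by $B_1,B_2$ and $e$ the edge between them, so that $\mathrm{Stab}(e)=\langle\b\rangle$, $\a$ fixes $v_1$, and $\c$ fixes $v_2$. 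Because the abelianisation of $\mathrm{BS}(1,2)$ kills $\b$ and is infinite cyclic on the remaining generator, no nontrivial power of $\a$ in $B_1$, and no nontrivial power of $\c$ in $B_2$, lies in $\langle\b\rangle$; equivalently, no nontrivial power of $\a$ or $\c$ fixes $e$. Let $X_1$ (resp.\ $X_2$) be the set of vertices of $T$ lying strictly closer to $v_1$ (resp.\ $v_2$); these partition $V(T)$ since $e$ separates $T$. A short argument with geodesics --- $\a^k$ fixes $v_1$ but carries $e$ off itself, so a geodesic from $v_1$ to $\a^k x$ with $x\in X_2$ does not pass through $v_2$ --- shows $\a^k X_2\subseteq X_1$ for $k\neq0$, and symmetrically $\c^k X_1\subseteq X_2$. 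Since $\a$ and $\c$ have infinite order, the ping-pong lemma gives $\langle\a,\c\rangle=\langle\a\rangle\ast\langle\c\rangle\cong F_2$, proving (3). In particular each factor $D_i$ embeds in $J$.

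\textbf{Asphericity (2), abelianisation (4), acyclicity (5).} The presentation $\langle\b,\c\mid\b\c\b^{-1}\c^{-2}\rangle$ of $\mathrm{BS}(1,2)$ is one-relator with relator not a proper power, hence aspherical (Lyndon). Presenting $D$ as the HNN extension of $\mathrm{BS}(1,2)$ over $\langle\b\rangle$ with stable letter $\a$, and $J$ as $D_1\ast_{F_2}D_2$, and invoking the principle of \S\ref{ss:ass} twice (the associated and amalgamated subgroups $\langle\b\rangle$ and $F_2$ being free), one gets an aspherical presentation of $J$; the displayed presentation is obtained from it by deleting the generators $\c_2,\a_2$ together with the relators $\c_2\a_1^{-1},\a_2\c_1^{-1}$, an elementary collapse of $2$-complexes, which preserves asphericity. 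This is (2). For (4): by (3), $\langle a,b,c\rangle$ is the embedded copy of $D_1\cong D$; the relators of $D$ force $\b=\c=0$ in $D^{\mathrm{ab}}$, and the homomorphism $D\to\Z$ with $\a\mapsto1,\ \b,\c\mapsto0$ shows the image of $\a=a$ has infinite order, so $\langle a,b,c\rangle^{\mathrm{ab}}\cong\Z$ on the class of $a$. For (5): by (2), $J$ has a $2$-dimensional $K(J,1)$, so $H_i(J;\Z)=0$ for $i\geq3$ and $H_2(J;\Z)$ is free abelian; each of the four relators of $J$ imposes a relation $x=2x$ for one generator $x\in\{a,b,c,d\}$, so $H_1(J;\Z)=J^{\mathrm{ab}}=0$; and the presentation $2$-complex has Euler characteristic $1-4+4=1$, which forces $\mathrm{rk}\,H_2(J;\Z)=0$ and hence $H_2(J;\Z)=0$. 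Thus $H_i(J;\Z)=0$ for all $i\geq1$.

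\textbf{No finite quotients (1).} This is Higman's theorem \cite{hig}, and the argument is short. Suppose $G\neq1$ is a finite quotient, with $\bar a,\bar b,\bar c,\bar d$ the images of the generators. If one image is trivial, then running round the cycle of relations forces all four to be trivial and $G=1$; so assume all are nontrivial. Each relation $\bar x\bar y\bar x^{-1}=\bar y^2$ makes $\bar y$ conjugate to $\bar y^2$, so every $|\bar y|$ is odd. Let $p$ be the smallest prime dividing $|\bar a|\,|\bar b|\,|\bar c|\,|\bar d|$; by the cyclic symmetry of the relations we may assume $p\mid|\bar b|$, and we pick $g\in\langle\bar b\rangle$ of order $p$. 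Conjugation by $\bar a$ normalises $\langle\bar b\rangle$ and carries $g$ to $g^2\neq g$, so it induces an automorphism of $\langle g\rangle\cong\Z/p$ of some order $m>1$, and $m$ divides both $p-1$ and $|\bar a|$; hence $|\bar a|$ has a prime divisor at most $p-1$, contradicting the minimality of $p$. Therefore all four orders are $1$, so $G=1$ --- a contradiction.
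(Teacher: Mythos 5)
Your argument is correct, and it reaches the key technical point of (3) by a genuinely different route from the paper. The paper views $D$ as a double HNN extension of the infinite cyclic group $\<\c\>$ (stable letters $\b$ then $\a$) and reads off both the freeness of $\<\a,\c\>$ and the asphericity of the presentation of $D$ directly from Britton's Lemma together with the closure properties of Section \ref{ss:ass}; you instead split $D$ as an amalgam of two copies of ${\rm BS}(1,2)$ over $\<\b\>$ and run ping-pong on the Bass--Serre tree. Both work; the paper's HNN route is slightly more economical because it needs only ``free presentations of free groups are aspherical'' as base case, whereas you must import Lyndon's asphericity theorem for the one-relator group ${\rm BS}(1,2)$ (you could avoid this by presenting ${\rm BS}(1,2)$ itself as an HNN extension of $\Z$, which would essentially reproduce the paper's tower). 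One imprecision to repair: your justification that no nontrivial power of $\c$ lies in $\<\b\>$ inside $B_2=\<\b,\c\mid \b\c\b^{-1}=\c^2\>$ cites ``the abelianisation kills $\b$,'' but in $B_2$ it is $\c$ that dies in the abelianisation (which is $\Z$ on $\b$); the correct argument is that $\c^k=\b^m$ forces $m=0$ upon abelianising and then $k=0$ because $\c$ has infinite order in ${\rm BS}(1,2)$ (e.g.\ from the $\Z[1/2]\rtimes\Z$ model or Britton's Lemma). The remaining items match the paper closely: (4) is identical; your Euler characteristic computation for (5) is the same rank count as the paper's observation that the surjection $\partial_2:\Z^4\to\Z^4$ must be injective; and for (1) you reproduce Higman's number-theoretic argument in full where the paper simply cites \cite{hig} --- your version of it is correct.
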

 
 \begin{proof} 
 Higman \cite{hig} proved (1) using some elementary number theory.
 
 The first thing that we must prove for (3) is that the subgroup
 of $D$ generated by $\a$ and $\c$ is free of rank 2. But this is clear
 from Britton's Lemma, once one
 observes that $D$ is obtained from the infinite cyclic
 group generated by $\c$ by two HNN extensions along
 cyclic subgroups: the first extension has stable
 letter $\b$, the second has stable letter $\a$
and amalgamated subgroups in $\<\b\>$.
 Following the comments in Section \ref{ss:ass},
 this HNN description also shows that the given presentation of $D$
  is aspherical, and that the  presentation of
   $D_1\ast_{F_2} D_2$  displayed below  is too.
  
 The isomorphism in (3) is given by
 $a\mapsto \a_1,\ b\mapsto \b_1,\, c\mapsto \c_1,
 \, d\mapsto \b_2$ (which has an obvious inverse).
 This restricts to an isomorphism from $\< a,b,c\>\subset J$ 
 to $D_1$, which
has abelianisation $\Z$, generated by the image of $\a_1$.
 This proves (4). 
 
 In order to prove (2) we compare the given presentation of $J$
 to that associated with the description $J\cong D_1\ast_{F_2} D_2$, namely
 $$
 \<  \a_1,\b_1,\c_1, \a_2,\b_2,\c_2\mid \a_i\b_i\a_i^{-1}=\b_i^2,\, \b_i\c_i\b_i^{-1}=\c_i^2 \ (i=1,2),\, \a_1\c_2^{-1},\, \a_2\c_1^{-1}\>.
 $$
 The generators $\a_2$ and $\c_2$ together with the last two relators
 can be removed by obvious Tietze moves. These relators correspond
 to  bigons (2-cells with boundary cycles of length 2) in the standard
 2-complex of the presentation, and the effect of the Tietze moves is
to shrink each of these to an edge in the obvious manner. 
Since this shrinking is a homotopy equivalence, we conclude that the
modified presentation
is still aspherical. The given presentation of $J$ is obtained
from this modified presentation by simply renaming the generators, so (2) is proved.

Turning to (5), note that we already know that $H_1(J;\Z)$, the abelianization
of $J$, is trivial. Also,
in the light of (1), we know that the groups $H_i(J;\Z)$ coincide with the homology
groups of the cellular chain complex of the standard 2-complex of the given presentation of $J$.
The group of cellular $k$-chains $C_k$ is trivial if $k\ge 3$ (as there are no $k$-cells), 
so $H_k(J;\Z)=0$ for $k\ge 3$ and $H_2(J;\Z)$ is isomorphic to the kernel of
$\partial_2: C_2\to C_1$. The final point to note is that
 $\partial_2$  is injective because $H_1(J;\Z)=C_1/{\rm{im}}(\partial_2)=0$
and $C_1\cong C_2\cong\Z^4$.
 \end{proof}
 
The final argument in the above proof generalizes immediately
to prove the following lemma, which we shall need in Section \ref{s:gen}.

\begin{lemma}\label{l:H2} If $G$  has an aspherical presentation with $n$ generators
and $m$ relations, and the abelianization of $G$ is finite, then $H_2(G;\Z)\cong\Z^{m-n}$.
\end{lemma}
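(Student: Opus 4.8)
The plan is to run exactly the argument used at the end of the proof of Proposition \ref{p:hig}(5), now in the stated generality. Fix the aspherical presentation $\<\A\mid\R\>$ of $G$ with $|\A|=n$ and $|\R|=m$, and let $K$ be its standard $2$-complex. By hypothesis $K$ is a $K(G,1)$, so $H_i(G;\Z)$ is the $i$-th cellular homology group of $K$. Its cellular chain complex is
$$
0\to C_2\overset{\partial_2}\to C_1\overset{\partial_1}\to C_0\to 0,
$$
with $C_0\cong\Z$ (one vertex), $C_1\cong\Z^n$ (one edge per generator) and $C_2\cong\Z^m$ (one $2$-cell per relator).

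Next I would observe that $\partial_1=0$: the standard $2$-complex of a presentation has a single vertex, so every $1$-cell is a loop and has trivial boundary. Hence $H_1(G;\Z)=C_1/{\rm{im}}(\partial_2)=\Z^n/{\rm{im}}(\partial_2)$ and $H_2(G;\Z)=\ker(\partial_2)$. The hypothesis that the abelianization $H_1(G;\Z)$ is finite then forces ${\rm{im}}(\partial_2)$ to have rank $n$; in particular $m\ge n$, and $\partial_2$ has rank exactly $n$.

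Finally, rank--nullity applied to $\partial_2:\Z^m\to\Z^n$ shows that $\ker(\partial_2)$ has rank $m-n$. Since $\ker(\partial_2)$ is a subgroup of the free abelian group $C_2\cong\Z^m$, it is itself free abelian, so $H_2(G;\Z)=\ker(\partial_2)\cong\Z^{m-n}$. There is essentially no obstacle here: the only point requiring any care is to notice that the finiteness of $H_1$ is precisely what pins down the rank of $\partial_2$ (and en passant forces $m\ge n$), after which the identification of $H_2$ with a finite-rank subgroup of a free abelian group is automatic. This is why the preceding argument "generalizes immediately."
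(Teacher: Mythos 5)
Your proof is correct and is exactly the intended argument: it is the final paragraph of the proof of Proposition \ref{p:hig}(5) (the case $m=n=4$, $H_1=0$) carried out in general, replacing "$\partial_2$ is surjective, hence injective" by the rank count forced by finiteness of $C_1/\mathrm{im}(\partial_2)$. Nothing further is needed.
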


\subsection{Centralizers and conjugacy in hyperbolic groups}\label{h-conj}

Recall from \cite{gromov}
that a finitely generated group $\G$ is said to be
{\em{hyperbolic}} (in the sense of Gromov) if there is a
constant $\delta>0$ such that each side of each triangle in
the Cayley graph of $\G$ is contained in the $\delta$-neighbourhood
of the union of the other two sides. 

We shall only consider torsion-free hyperbolic groups.
The centralizer of every non-trivial element in such a group is cyclic;
in particular every element is contained in a maximal cyclic subgroup,
and the centralizer of a non-cyclic subgroup is trivial.

Hyperbolic groups admit a rapid and effective solution to the conjugacy problem,
both for individual elements and for finite subsets (but not finitely generated subgroups).
In particular, Bridson and Howie \cite{BH} prove that if  a torsion-free group $\G$ is $\delta$-hyperbolic with
respect to a generating set of cardinality $k$, then there is a constant $C=C(\delta,k)$
and an algorithm that, given two lists of words $[a_1,\dots,a_m]$ and
$[b_1,\dots,b_m]$ in the generators of $\G$, the longest word having length $\ell$,
will terminate after at most
$Cm\ell^2$ steps having determined whether or not there exists an element  $\gamma\in\G$
such that $\gamma a_i\gamma^{-1}=b_i$ for $i=1,\dots,m$, outputting such a $\gamma$
if it exists. Applying this to lists of length one, we deduce
that the conjugacy problem for individual elements  can
be solved in quadratic time in $\G$, and indeed in the direct product of finitely many copies of
$\G$.  
A sharper result was obtained by 
Epstein and Holt \cite{EH}. They prove
 that the conjugacy problem in $\G$ is solvable in linear time if
one uses a standard RAM model of computation in which basic arithmetical operations on integers are assumed to take place in constant time; this gives an algorithm of Turing complexity $O(n\, \log n)$.
This algorithm extends easily to $\G\times\G$.

To mark the distinction between Turing and RAM models of computation, we shall use the
term {\em{RAM-linear}} when referring to the Epstein-Holt algorithm.

\section{The conjugacy problem}\label{s:conj}

With the preceding discussion in hand, we can state a more
precise version of Theorem \ref{t:conj}.

\begin{thm}\label{t:conj1}
There exist pairs of finitely presented, residually finite groups $u:P\hookrightarrow G$
such that $\hat u:\hat P\to \hat G$ is an isomorphism but the conjugacy problem in $P$
is unsolvable while the conjugacy problem in $G$ can be solved in RAM-linear
time.
\end{thm}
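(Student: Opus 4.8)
The strategy is to run the Bridson--Grunewald machinery (Theorem \ref{t:BG}) on a carefully chosen input group $Q$, and then exploit the fact that the conjugacy problem in a fibre product $P=\{(\gamma_1,\gamma_2)\mid p(\gamma_1)=p(\gamma_2)\}\subset\G\times\G$ encodes the word problem (indeed the generalized word problem) in the quotient $Q$. Concretely: if $n\in N$ then $(n,1)$ and $(1,1)$ are conjugate in $P$ if and only if there is $(\gamma_1,\gamma_2)\in P$ conjugating one to the other; since $\G$ is torsion-free hyperbolic the centralizer of $n\neq 1$ is cyclic, and an analysis of which pairs $(\gamma_1,\gamma_2)$ can conjugate $(n,1)$ to a given element forces $\gamma_2$ to centralize $1$ (no constraint) while $\gamma_1 n\gamma_1^{-1}$ is the target; the obstruction to solving such conjugacy questions uniformly then becomes an instance of an unsolvable problem about $Q$. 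Thus we need $Q$ to be an infinite, finitely presented group of type $F_3$ with $H_2(Q;\Z)=0$, with no non-trivial finite quotients, and with \emph{unsolvable word problem} — or more precisely, chosen so that the induced conjugacy questions in $P$ are unsolvable. Following the recipe recalled after Theorem \ref{t:BG} (embed a group of type $F_3$ with unsolvable word problem into one with no finite quotients via free products and HNN extensions along the lines of \cite{mb-ep}, then pass to the universal central extension to kill $H_2$), one produces such a $Q$; Proposition \ref{p:Gtilde}(5),(6),(7) and Lemma \ref{l:H2} guarantee the universal central extension retains the needed finiteness, $F_3$, no-finite-quotients and super-perfection properties, and Proposition \ref{p:WPtilde} lets us track the word problem through the central extension.

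With such a $Q$ in hand, Theorem \ref{t:BG} yields $u:P\hookrightarrow\G\times\G$ with $\hat u$ an isomorphism, $P$ finitely presented (since $Q$ is $F_3$), and $\G$ torsion-free hyperbolic. The first main claim is that the conjugacy problem in $P$ is unsolvable: I would show that deciding conjugacy of elements of the form $(w,1)$ with $w$ a word mapping into $N$ (equivalently $p(w)=1$ in $Q$) reduces to an undecidable problem — for instance the word problem in $Q$, by arranging that $(w,1)\in P$ exactly when $w=1$ in $Q$, and that $(w,1)$ is conjugate in $P$ to $(w',1)$ iff $w$ and $w'$ are conjugate in $N$ under the action of $Q$, which one renders undecidable through the choice of $Q$. (This is precisely the type of argument in \cite{CM}; one transports their construction into the $\hat u$-isomorphism setting.) The second main claim is that the conjugacy problem in $\G\times\G$ is RAM-linear: this is immediate from the Epstein--Holt result \cite{EH} quoted in Section \ref{h-conj}, which extends to $\G\times\G$, since a pair $(\gamma_1,\gamma_2)$ conjugates $(a_1,a_2)$ to $(b_1,b_2)$ iff simultaneously $\gamma_i a_i\gamma_i^{-1}=b_i$, and one runs the two coordinate algorithms independently and checks compatibility of the returned conjugators — but in fact in a direct product conjugacy decouples entirely into the two factors, so no compatibility check is even needed.

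The main obstacle is the first claim: arranging that the conjugacy problem in $P$ is genuinely unsolvable \emph{while keeping $\hat u$ an isomorphism}, i.e. while $Q$ has no finite quotients, $H_2(Q;\Z)=0$, and $Q$ is of type $F_3$. The tension is that the standard sources of groups with unsolvable word problem (Higman-embedding type constructions) do not obviously come with these homological and finiteness guarantees, so the real work is the explicit construction of $Q$ — combining an $F_3$ group with unsolvable word problem, Higman's group $J$ (Proposition \ref{p:hig}, which is acyclic, aspherical, and has no finite quotients) as an amalgamation partner to kill finite quotients while preserving asphericity via the remarks in Section \ref{ss:ass}, and then the universal central extension to kill $H_2$ — and then verifying that the conjugacy obstruction survives all these steps. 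Controlling the word problem along the central extension is handled by Proposition \ref{p:WPtilde}; controlling it along the amalgamations uses that the word (and conjugacy) problem in an amalgam over a subgroup with solvable membership problem reduces to those in the factors. Once $Q$ is constructed and its properties verified, the rest of the proof is assembly.
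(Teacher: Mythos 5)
Your overall architecture matches the paper's: build $Q$ by embedding a group of type $F_3$ with unsolvable word problem into one with no proper finite-index subgroups as in \cite{mb-ep}, pass to the universal central extension to kill $H_2$ while tracking the word problem via Proposition \ref{p:WPtilde}, feed $Q$ into Theorems \ref{t:rips} and \ref{t:BG} to get $u:P\hookrightarrow\G\times\G$ with $\hat u$ an isomorphism, and quote Epstein--Holt for the RAM-linear bound in $\G\times\G$. The gap is in the one step that carries the content: the reduction showing that the conjugacy problem in $P$ is unsolvable.

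The test elements you propose do not work. If you compare $(w,1)$ with $(w',1)$ --- or, as in your sketch, $(n,1)$ with a target whose second coordinate is trivial --- then, as you yourself observe, the second coordinate of a conjugator $(\gamma_1,\gamma_2)$ is completely unconstrained. But then any $\gamma_1\in\G$ with $\gamma_1 w\gamma_1^{-1}=w'$ extends to an element of $P$ (take $\gamma_2=\gamma_1$, or any $\gamma_2$ with $p(\gamma_2)=p(\gamma_1)$, which exists because $p$ is onto). Hence $(w,1)$ and $(w',1)$ are conjugate in $P$ if and only if $w$ and $w'$ are conjugate in $\G$, and that is decidable because $\G$ is hyperbolic; your reduction collapses to a solvable problem, not to the word problem in $Q$ or to ``conjugacy in $N$ under the action of $Q$''. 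Unsolvability of the membership problem for $P$, which you correctly identify, does not transfer to the conjugacy problem by itself: you must choose test elements whose centralizer in $\G\times\G$ is small and lies \emph{inside} $P$, so that $P$-membership of the conjugator becomes the deciding issue. The paper does this by taking $a_1\in N$ generating a maximal cyclic subgroup $C_1$ (maximal in $\G$, not just in $N$, using torsion-freeness of $Q$) and comparing $(a_1,a_1)$ with $(w^{-1}a_1w,a_1)$ for words $w$ in lifts of the generators of $Q$. Both elements lie in $P$; the centralizer of $(a_1,a_1)$ in $\G\times\G$ is $C_1\times C_1\subset P$; the full set of conjugators in $\G\times\G$ is the coset $(w^{-1},1)\,(C_1\times C_1)$; and this coset meets $P$ if and only if $(w,1)\in P$, i.e.\ if and only if $w=1$ in $Q$. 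That equivalence is what converts the unsolvable word problem of $Q$ into unsolvability of the conjugacy problem in $P$, and it is the idea missing from your proposal.
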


\begin{proof} 
In \cite{CM} (cf.~section \ref{ss:CM} below)
Collins and Miller described a group $G$ that has an unsolvable
word problem and a compact 2-dimensional
classifying space $K(G,1)$; in particular $G$ is torsion-free.
As in \cite{mb-ep} (or Section \ref{ss:noFin} below),
by forming suitable free products
with amalgamation and HNN extensions along free subgroups,
we can embed $G$ in a group $G'$ that again has such a 
classifying space and has
the additional feature that $G'$ has no proper subgroups of finite
index. By construction, $G'$ has trivial centre. And since $G'$
contains a copy of $G$, it has an unsolvable word problem.

Let $Q$ be the universal central extension of $G'$. 
We saw in Proposition \ref{p:Gtilde} that $Q$ is 
super-perfect, torsion-free, has no proper subgroups of finite
index, and has a compact classifying space. In Proposition \ref{p:WPtilde}
we proved that the word problem in $Q$ is unsolvable.

Let $\G$ be the torsion-free hyperbolic group obtained by applying Theorem \ref{t:rips}
 to $Q$ and let $P\subset\G\times\G$ be the 
fibre product of $\G\to Q$. Theorem \ref{t:BG}
assures us that $P$ is finitely presented and that
$P\to\G\times\G$ induces an isomorphism of profinite completions.
We saw in Section \ref{h-conj} that the conjugacy problem 
in $\G\times\G$ can be solved in RAM-linear  time, so we will be
done if we can argue that the conjugacy problem in $P$ is
unsolvable.

In the notation of Theorem \ref{t:BG}, we have
$p:\G\to Q$ with kernel $N$. We fix
a finite generating set $X$ for $Q$, choose a set of lifts
$\tilde X = \{\tilde x \mid  x\in X\}$ with $p(\tilde x) = x$ and consider
a generating set $\hat X\cup \{a_1,\dots,a_l\}$ for $\G$, where each
$a_i$ is in $N$ and the cyclic subgroups $C_i:=\<a_i\>\subset N$ are maximal
with respect to inclusion. Note that since $Q$ is torsion
free, $\<a_i\>$ will be maximal in $\G$ not just in $N$, and
hence the centralizer $Z(a_i)$ of $(a_i,a_i)$ in $\G\times\G$ is
$C_i\times C_i$. In particular $Z(a_i)\subset P$.

We claim that there is no algorithm that, given a word $w$ in 
the letters $\tilde X^{\pm 1}$, can determine whether or not
$(w^{-1}a_1w,a_1)$ is conjugate to $(a_1,a_1)$ in $P$.

First note that $(w^{-1}a_1w,a_1)$ does indeed belong to $P$
since $p(w^{-1}a_1w) = p(a_1)=1$. Also observe that
$(w,1)$ conjugates $(a_1,a_1)$ to $(w^{-1}a_1w,a_1)$ in $\G\times\G$.
Hence there exists $\pi\in P$ with $\pi (a_1,a_1)\pi^{-1}=
(w^{-1}a_1w,a_1)$ if and only if $\pi (w,1)\in Z(a_1)\subset P$,
which is equivalent to $(w,1)\in P$. 
But $(w,1)$ belongs to $P$ if and only if $w$ (viewed
now as a word in the letters $X$) is equal to $1$ in $Q$. And since
$Q$ has an unsolvable word problem, there is no algorithm that
can determine whether or not $(w,1)\in P$.
\end{proof}

\begin{remark} In the course of the above
proof we established that the membership problem for $P$
is unsolvable; cf.~\cite{BBMS}, p.468.
\end{remark} 

Each of the non-trivial groups $Q_n$ constructed in the proof of Theorem \ref{t:triv1} can play the
role of $Q$ in the above proof. Thus we have:

\begin{addendum} There does not exist an algorithm that, given a finitely presented,
residually finite group $G$ with a conjugacy word problem and a finitely presentable
subgroup $u:P\hookrightarrow G$ with $\hat u: \hat P\to \hat G$ an isomorphism, can
determine whether or not $P$ has a solvable conjugacy problem.
\end{addendum}

\subsection{Conjugacy separability}

A group $G$ is said to be {\em{conjugacy separable}} if, for
each pair of non-conjugate elements $x,y\in G$ there exists a finite
quotient $p:G\to Q$ such that $p(x)$ is not conjugate to $p(y)$ in $Q$.
Conjugacy separability leads to a solution to the conjugacy problem in a
group in much the same way as residual finiteness leads to a solution
to the word problem. Conjugacy separability is relevant in
the context of the present article because it can be expressed as
 a property of the pair $(G, \hat G)$: it is
equivalent to the statement that $G\hookrightarrow\hat G$ is an embedding and
$x,y\in G$ are conjugate in $\hat G$ if and only if they are conjugate in $G$.

One would like to refine Theorem \ref{t:rips} so as to ensure that the hyperbolic
group $\G$ is conjugacy separable. It is not yet clear if such a refinement
exists\footnote{Update September 2008: 
Owen Cotton-Barratt and Henry Wilton have now proved that it does exist.}. 
If it did, then one could arrange for the group $G$ in  Theorem \ref{t:conj1} to
be conjugacy separable. For the moment,  I can only arrange this by weakening
the finiteness condition on $P$.

\begin{prop} \label{p:weak}
There exist residually finite groups $u:P\hookrightarrow\G$, with $P$ finitely generated and
$\G$ finitely presented,
so that $\hat u:\hat P\to \hat \G$ is an isomorphism but  $\G$ is conjugacy separable
while the conjugacy problem in $P$ is unsolvable.
\end{prop}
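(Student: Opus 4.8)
The plan is to run the proof of Theorem~\ref{t:conj1} with a single modification: replace the hyperbolic Rips--Wise group in the middle by a direct product of two free groups, which \emph{is} conjugacy separable. The price paid is that the fibre product $P$ becomes only finitely generated, because the relevant kernel is no longer finitely generated. Concretely, I would take for $Q$ the group constructed in the proof of Theorem~\ref{t:conj1}: a finitely presented, torsion-free group that is infinite, super-perfect (so $H_2(Q;\Z)=0$), has no non-trivial finite quotients, and has an unsolvable word problem. Fix a finite generating set for $Q$, let $F$ be the free group on it, let $p\colon F\twoheadrightarrow Q$ be the tautological epimorphism, and set $R=\ker p$, $\G=F\times F$, and $P=\{(f_1,f_2)\in\G : p(f_1)=p(f_2)\}$, with $u\colon P\hookrightarrow\G$ the inclusion.

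The basic properties are immediate. First, $\G=F\times F$ is finitely presented and residually finite, and it is conjugacy separable: free groups are conjugacy separable (a classical fact), and a direct product of conjugacy separable groups is conjugacy separable --- if $(a_1,a_2)$ and $(b_1,b_2)$ are non-conjugate in $G_1\times G_2$ then $a_i$ is not conjugate to $b_i$ in $G_i$ for some $i$, and composing the projection $G_1\times G_2\to G_i$ with a finite quotient of $G_i$ separating the conjugacy classes of $a_i$ and $b_i$ does the job. Second, $P$ is residually finite, being a subgroup of $\G$, and it is finitely generated: the diagonal copy of $F$ together with the elements $(\rho_1,1),\dots,(\rho_m,1)$ generate $P$, where $\rho_1,\dots,\rho_m$ is a finite set of normal generators of $R$ in $F$ (such a set exists since $Q$ is finitely presented) --- indeed the subgroup they generate contains $R\times\{1\}$ (conjugate the $(\rho_j,1)$ by the diagonal) and the diagonal, hence all of $P$ since $(f_1,f_2)=(f_1f_2^{-1},1)(f_2,f_2)$ with $f_1f_2^{-1}\in R$ whenever $(f_1,f_2)\in P$. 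Third, $\hat u\colon\hat P\to\hat\G$ is an isomorphism by the Platonov--Tavgen criterion \cite{BG} --- the profinite-completion mechanism underlying Theorem~\ref{t:BG}, whose proof uses only that the middle term of $1\to R\to F\to Q\to 1$ is a finitely generated residually finite group, that $Q$ is finitely presented with no non-trivial finite quotients, and that $H_2(Q;\Z)=0$, not that the middle group is hyperbolic. (Surjectivity of $\hat P\to\hat F\times\hat F$: since $\hat Q=1$, the closure of $R$ in $\hat F$ is all of $\hat F$, so the closure of $P$ contains $\hat F\times\{1\}$ and the diagonal, hence everything; injectivity is the usual five-term exact sequence argument, where $H_2(Q;\Z)=0$ enters.)

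It remains to show that the conjugacy problem in $P$ is unsolvable, and for this I would reproduce the argument from the proof of Theorem~\ref{t:conj1} almost verbatim; the one new point is the production of an element $a$ whose centralizer in $\G$ lies in $P$. Here one uses that $R$ is root-closed in $F$: if $r\in R$ and $r=s^k$ with $k\ge 1$, then the image of $s$ in the torsion-free group $Q$ satisfies $\bar s^k=1$, so $\bar s=1$ and $s\in R$. Picking any $1\ne r\in R$ (there is one, since $Q$ has unsolvable word problem and is therefore not free) and letting $a$ be its root in $F$, we get $a\in R$ with $a$ not a proper power, hence $Z_F(a)=\langle a\rangle\subseteq R$ and therefore $Z_\G(a,a)=\langle a\rangle\times\langle a\rangle\subseteq P$. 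Now, just as in the proof of Theorem~\ref{t:conj1}, for any word $w$ in the free basis of $F$ one has $(w^{-1}aw,a)\in P$, and $(w,1)$ conjugates $(a,a)$ to $(w^{-1}aw,a)$ in $\G$; consequently $(w^{-1}aw,a)$ is conjugate to $(a,a)$ in $P$ if and only if $(w,1)\in P$, and the latter holds if and only if $p(w)=1$ in $Q$. Since the word problem in $Q$ is unsolvable, no algorithm decides this, so the conjugacy problem in $P$ is unsolvable.

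The only real decision in the argument is the replacement of the Rips--Wise group by $F\times F$; once it is made, every step above is routine, and I do not anticipate a serious obstacle. If any point needs care, it is the assertion that the Platonov--Tavgen profinite-completion argument survives this substitution --- but it does, since that argument never uses more than finite generation and residual finiteness of the middle group, both of which hold for the free group $F$.
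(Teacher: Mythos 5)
Your proposal is correct and follows the same route as the paper: take $Q$ from the proof of Theorem~\ref{t:conj1}, form the fibre product $P$ of $F\twoheadrightarrow Q$ inside $F\times F$, invoke Baumslag's conjugacy separability of free groups (which passes to direct products), and rerun the conjugacy-problem argument of Theorem~\ref{t:conj1}. The paper compresses the last step to ``arguing exactly as in the proof of Theorem~\ref{t:conj1}''; your explicit treatment of the one point that genuinely needs adapting --- producing $a\in R$ that is not a proper power, via root-closedness of $R$ from torsion-freeness of $Q$, so that $Z_{F\times F}\bigl((a,a)\bigr)=\langle a\rangle\times\langle a\rangle\subseteq P$ --- is exactly the right substitute for the hyperbolic-centralizer argument.
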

 
\begin{proof} Let $Q=\<X\mid R\>$ be as in the proof of Theorem \ref{t:conj1} and let $p:F\to Q$
be the implied surjection from the free group on the set $X$. Let $P\subset F\times F$
be the corresponding fibre product and note that $P$ is generated by the finite set
$\{(x,x)\mid x\in X\}\cup\{(r,1)\mid r\in R\}$.

Arguing exactly as in the proof of Theorem \ref{t:conj1},
we see that $P$ has an unsolvable conjugacy problem and that $P\hookrightarrow F\times F$
induces an isomorphism of profinite completions. Baumslag \cite{baum} proved that free groups
are conjugacy separable, from which it follows easily that $F\times F$ is.
\end{proof}

\begin{remark} Finitely presented subgroups of direct products of free groups have
solvable conjugacy problem \cite{BM}. What is more, using
results from \cite{BHMS} and \cite{BW}, 
Chagas and Zalesskii \cite{CZ} recently proved that all finitely presented residually free
groups are conjugacy separable.  Thus one cannot perturb the proof of Proposition \ref{p:weak}
in a trivial way so as to make $P$  finitely presented.
\end{remark}

\section{On the triviality problem for super-perfect groups}\label{s:triv}

In Section \ref{s:iso} we shall prove Theorem \ref{t:iso} by exploiting
 the existence of sequences of finite presentations with the following properties.

\def\X{\mathcal X}

\begin{thm}\label{t:triv1} There exists a finite 
set $\X$ and a recursive sequence $(\R_n)$ of finite
sets of words in the letters $\X^{\pm 1}$, of   fixed
cardinality,
so that there is no algorithm to
determine which of the groups $Q_n=\<\X\mid  \R_n\>$
are trivial, and each of the groups has the
following properties:
\begin{enumerate}
\item  $H_1(Q_n;\Z) = H_2(Q_n;\Z)=0$;
\item $Q_n$ has a compact classifying space $K(Q_n,1)$;
\item $Q_n$ has no non-trivial finite quotients.
\end{enumerate}
Moreover, if $Q_n\neq 1$ then $Q_n$ has an unsolvable word problem.
\end{thm}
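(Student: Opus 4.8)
The plan is to manufacture the sequence $(\R_n)$ by combining a classical undecidability input with the three homological/topological constructions already prepared in Section 1. As the undecidable engine I would start from a recursive sequence of finite group presentations $\langle Y\mid S_n\rangle$ presenting groups $G_n$ for which there is no algorithm to decide triviality; such sequences are standard (Adian--Rabin, or more conveniently Collins--Miller-style presentations with uniform generating set, using that the set of presentations of the trivial group is not recursive). One can moreover arrange, again by a standard argument using a fixed Turing machine with undecidable halting set, that each $G_n$ is either trivial or has unsolvable word problem, and that all $|S_n|$ have a fixed cardinality over a fixed alphabet $Y$. This yields the last sentence of the theorem provided the subsequent constructions preserve ``trivial, or unsolvable word problem'' — which they do, since embeddings and universal central extensions of a group with unsolvable word problem again have unsolvable word problem (Proposition \ref{p:WPtilde} for the central extension step, and a retraction/Magnus-type argument for the embedding step), while they send the trivial group to the trivial group.

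The core of the argument is then to push each $G_n$ through the two-step procedure described after Theorem \ref{t:BG}, done algorithmically and uniformly in $n$. First I would apply the construction of \cite{mb-ep} to embed $G_n$ in a group $G_n'$ with no non-trivial finite quotients, a compact $2$-dimensional classifying space, and trivial centre; the point to check is that this construction (suitable free products and HNN extensions along free subgroups) is performed by a single algorithm, outputs presentations of fixed cardinality over a fixed alphabet, carries the trivial group to the trivial group, and — by Section \ref{ss:ass} — produces aspherical presentations, so that $G_n'$ indeed has a $2$-dimensional $K(G_n',1)$. Second, I would pass to the universal central extension $Q_n := \widetilde{G_n'}$. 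By Proposition \ref{p:Gtilde}: $Q_n$ is super-perfect, giving (1); parts (6)–(8) give that $Q_n$ is finitely presented, torsion-free and has a compact classifying space, giving (2); part (5) gives no non-trivial finite quotients, giving (3). Crucially, $Q_n$ can be presented algorithmically from a presentation of $G_n'$: the presentation $[F,F]/[F,R]$ of part (2) is explicit, and one uses Lemma \ref{l:H2} together with the aspherical $2$-dimensional presentation of $G_n'$ to compute a finite presentation of $Q_n$ with controlled (indeed fixed, after padding) cardinality — this is the ``Miller algorithm'' advertised in the introduction as Corollary \ref{c:algo}. Finally, $Q_n \cong \{1\}$ iff $G_n' \cong \{1\}$ iff $G_n \cong \{1\}$, so the undecidability of triviality is preserved, and I would set $\X := $ the common alphabet and $\R_n := $ the resulting relator set.

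The main obstacle is bookkeeping rather than conceptual: ensuring that every step — the choice of undecidable input sequence, the embedding into a group with no finite quotients, the passage to the universal central extension, and in particular the production of an explicit finite presentation of $\widetilde{G_n'}$ — is \emph{uniformly algorithmic in $n$} and outputs presentations of \emph{fixed cardinality over a fixed finite alphabet}. Uniformity of the \cite{mb-ep} embedding and of the universal-central-extension presentation algorithm (Corollary \ref{c:algo}) is where care is needed; the fixed-cardinality requirement is then arranged by the usual device of padding presentations with redundant generators $t$ and relators $t=1$. The preservation of ``no finite quotients'' across the central extension step is immediate from Proposition \ref{p:Gtilde}(5), and asphericity (hence $2$-dimensionality, hence — via Proposition \ref{p:Gtilde}(8) — a compact $K(Q_n,1)$) follows from the stability of aspherical presentations under amalgams and HNN extensions over free groups recorded in Section \ref{ss:ass}. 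Once uniformity is in hand, conditions (1)–(3) and the final word-problem dichotomy all fall out of the quoted propositions with no further work.
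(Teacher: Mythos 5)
Your proposal follows essentially the same route as the paper: Collins--Miller presentations with undecidable triviality as input, an \cite{mb-ep}-style modification to exclude finite quotients, and Miller's algorithm (Corollary \ref{c:algo}) to pass to the universal central extension, with properties (1)--(3) read off from Proposition \ref{p:Gtilde} and the word-problem clause from Proposition \ref{p:WPtilde} plus the fact that the non-trivial $\E_n$ contain Boone's group. The one step you flag but do not actually resolve --- that the ``no finite quotients'' embedding carries the trivial group to the trivial group --- is the only genuinely delicate design point, and it is not mere bookkeeping: a generic free product or HNN extension with a copy of Higman's group $J$ sends the trivial group to a non-trivial group, so a naive application of \cite{mb-ep} fails here. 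The paper arranges it by amalgamating one copy $J_i$ of $J$ over each generator $x_i$ of $\E_n$, identifying $x_i$ with an element $y$ whose normal closure is all of $J$; then $\E_n=1$ forces each $x_i=1$, hence each $J_i=1$ (since $J/\ln y \rn =1$), hence $\E_n'=1$, while in the non-trivial case the $x_i$ have infinite order so the amalgamations are honest and asphericity and the fixed relator count are preserved. With that choice made, the rest of your outline goes through exactly as you describe (and no padding is needed, since each stage already outputs presentations of fixed cardinality over a fixed alphabet); note only that asphericity of the resulting presentations holds just in the non-trivial case, which is the only case where it is needed, and that perfectness of $G_n'$ (a prerequisite for Corollary \ref{c:algo}) follows from its having no non-trivial finite quotients.
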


\subsection{The origin of the undecidability}\label{ss:CM}

\begin{thm}[Collins and Miller]\label{aspher}\label{t:CM}
 There is an integer $k$,
a finite set $X$ and a recursive sequence $(R_n)$ of finite
sets of words in the letters $X^{\pm 1}$ so that:
\begin{enumerate}
\item $|R_n|=k$ for all $n$, and $|X|<k$;
\item each of the groups $\E_n=\langle X\mid R_n\>$ is torsion-free;
\item there is no algorithm that can determine which of these
groups are trivial;
\item if $\E_n$ is non- trivial, then the presentation $\Pi_n\equiv\langle X\mid R_n\>$ 
is aspherical and each $x\in X$ is non-trivial in $\E_n$.
\end{enumerate}
\end{thm}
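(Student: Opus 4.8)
The plan is to reduce the (unsolvable) word problem of a single fixed group to the triviality problem for a uniform family, using Higman's group $J$ as a ``collapse gadget''. First I would fix a finitely presented, torsion-free group $M=\langle X_0\mid S_0\rangle$ with an aspherical presentation and unsolvable word problem; such groups exist, for instance by combining a Boone--Britton-type group with the HNN/amalgam smoothing of Section \ref{ss:ass} so as to retain asphericity and torsion-freeness, and we may also arrange that every generator of $M$ is non-trivial. From the unsolvability of the word problem I would extract a recursive sequence of words $(w_n)$ in $X_0^{\pm1}$ for which the set $\{n : w_n=1 \text{ in } M\}$ is not recursive (the set of null-homotopic words is recursively enumerable but not recursive, and pulling it back along a recursive enumeration of all words produces such a sequence).

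The core is a uniform, computable recipe $w\mapsto \Pi(w)\equiv\langle X\mid R(w)\rangle$, with $X\supseteq X_0$ a fixed alphabet and $|R(w)|=k$ a fixed integer satisfying $|X|<k$, such that the group $\E(w)$ presented by $\Pi(w)$ is trivial precisely when $w=1$ in $M$. Setting $\R_n=R(w_n)$ then yields conclusions (1) and (3), recursiveness of $(\R_n)$ being inherited from recursiveness of $(w_n)$ and computability of the recipe. The point that keeps the relation count fixed is that $w$ enters only through the \emph{content} of a bounded number of relator words, not through their number: the lengths of these relators grow with $|w|$, but $k=|R(w)|$ does not. I would build $\E(w)$ as a short tower of amalgamated free products and HNN extensions, all over infinite-cyclic (hence free) edge subgroups, starting from $M$ together with a fixed copy of Higman's group $J$. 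Because $J$ has no non-trivial finite quotients and, by the relations recorded in Proposition \ref{p:hig}, collapses entirely as soon as any one of its generators is killed (if $a=1$ then $b=b^2$, so $b=1$, and so on around the cycle), it is ideally suited to convert a vanishing condition into total collapse.

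The heart of the argument is the following dichotomy, which I would establish by Britton's Lemma and the normal-form theorem for amalgams. When $w\neq1$ in $M$, the element represented by $w$ has infinite order, since $M$ is torsion-free, so the edge subgroups used in the tower are genuinely infinite cyclic and the tower is non-degenerate; by the stability of asphericity under amalgamation and HNN extension over free subgroups (Section \ref{ss:ass}) the natural presentation $\Pi(w)$ is then aspherical, and the normal forms show that $\E(w)$ is non-trivial and torsion-free, with $M$ (and hence every generator of $X_0$) embedding faithfully and each gadget generator surviving --- so every letter of $X$ is non-trivial, giving (2) and (4). When $w=1$ in $M$, the relator tying $w$ to a distinguished generator of the $J$-factor forces that generator to be trivial, $J$ collapses, and the construction is arranged so that this cascade propagates through the tower to annihilate each generator of $M$ as well; hence $\E(w)=1$.

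The main obstacle is precisely the design of $R(w)$ so that the \emph{same} bounded set of relators does two opposite things: when $w\neq1$ it describes an honest, non-degenerate, asphericity-preserving tower of HNN extensions and amalgams, while when $w=1$ it triggers a complete Higman-style cascade that destroys not only the gadget but also the ambient $M$. Collapse wants Baumslag--Solitar/Higman-type relators, in which trivializing one element kills everything, whereas asphericity and torsion-freeness want the associated edge subgroups to be free and correctly embedded; the delicate combinatorial work is to verify, via Britton's Lemma, that the edge subgroups are free of the correct rank and the tower is non-degenerate exactly when $w\neq1$, and that the cascade reaches every generator exactly when $w=1$.
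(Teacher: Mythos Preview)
Your overall architecture matches the paper's (and Collins--Miller's): fix a group $B$ with unsolvable word problem and an aspherical presentation, then give a uniform, computable recipe $w\mapsto\Pi^w$ so that the presented group is trivial iff $w=1$ in $B$, while for $w\neq 1$ the group is built from $B$ by amalgams/HNN extensions along free subgroups and hence inherits asphericity and torsion-freeness. Where you diverge is in the gadget. The paper does \emph{not} use Higman's group $J$ at this stage: it invokes Miller's version of Rabin's construction (\cite{cfm}, p.~88), in which $\Lambda_w$ (for $w\neq 1$) arises from $B$ by a chain of free products with \emph{free} groups and HNN extensions along finitely generated free subgroups, and the point of \cite{CM} is precisely to verify that this already-known recipe yields aspherical presentations. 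Higman's group enters the paper only \emph{after} Theorem~\ref{t:CM}, in Section~\ref{ss:noFin}, to strip finite quotients from the $\Lambda_n$.

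Your Higman-based variant has a genuine gap at exactly the step you flag as the main obstacle. Collapsing $J$ once $w=1$ forces one of its generators to vanish is easy; what is not explained is how that collapse propagates to kill \emph{every} generator of $M$, via relators that in the $w\neq 1$ case still describe a non-degenerate tower of amalgams/HNN extensions over free edge groups. Any relator linking a generator of $J$ to a generator $x_i$ of $M$ must, when $w\neq 1$, be read as an amalgam or HNN step, which requires checking that the relevant subgroups are free of the asserted rank inside the tower already built, and doing this for all $x_i$ with a fixed number of relators without introducing unintended relations or spoiling asphericity. You assert this can be arranged but do not exhibit or verify the arrangement; that verification is precisely the content of the theorem. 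In the paper's route this work is not sidestepped but offloaded to the specific, established Rabin--Miller construction together with the asphericity analysis carried out in \cite{CM}.
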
 

We sketch the proof. In \cite{CM},
Collins and Miller
explain in detail how Boone's classical example of a finitely
presented group $B$ with an unsolvable word problem can be obtained
from a finitely generated free group in a finite number of steps, where each
step consists of taking a free product with amalgamation or an
HNN extension in which the associated subgroups are free of finite
rank; in particular $B$ is torsion-free and has an aspherical presentation
$\Pi_0$.

Collins and Miller go on to observe that in Miller's interpretation of 
Rabin's construction (\cite{cfm}, p.88), the finite presentations
$\Pi^w$ that are constructed,
indexed by words $w$ in the generators of $\Pi_0$, either
define the trivial group (the case where $w=1$ in $B$) or else
(when $w\neq 1$ in $B$) they are
the natural presentations associated to passing from
$B$ to a group $\E_w$ by a finite chain of free products
with free groups and HNN extensions along finitely generated free
groups; in particular $\E_w$ is torsion-free if $w\neq 1$ in $B$, and
$\Pi^w$ is aspherical. 
In Miller's construction the presentations $\Pi^w$
share a common set of generators (i.e. are defined
as quotients of a fixed
free group) and have the same number of relators. Moreover the number
of relators is greater than the number of generators. If $\E_w$ is non-trivial then
each of  the given generators has infinite order. 

Thus the proof is complete, modulo a switch of 
indexing set to $\mathbb N$, which can be achieved by replacing
each word $w$ by its index in the lex-least ordering, for example.

\subsection{Excluding finite quotients}\label{ss:noFin}
Let the presentations $\Pi_n\equiv\langle X\mid R_n\>$ be as 
 in Theorem \ref{aspher}. We shall describe an
 algorithm that modifies these presentations so as to ensure that the
groups  they present have no proper subgroups of finite index.

Suppose $X=\{x_1,\dots,x_l\}$. 

Let $\langle Y\mid T\rangle$ be a finite aspherical presentation of a group $J$
 that has no finite quotients, and let $y\in Y$
be an element whose normal closure in $J$ is the whole group; that is,
$J/\ln y\rn = 1$. For example, we can take $J$ to be one of the finitely presented
infinite simple groups constructed by Burger and Mozes \cite{BMo}, in which case any $y\neq 1$ will do,
or we can
take the standard presentation of Higman's group 
$\langle a,b,c,d \mid aba^{-1}=b^2,\, bcb^{-1}=c^2,\, 
cdc^{-1}=d^2,\, dad^{-1}=a^2\rangle$ and let $y$  be any of $a,b,c,d$.

\begin{definition}\label{d:Pi'} Let $E_n'$ be the group with presentation
$$
\Pi_n' \equiv \langle X\cup Y_1\cup\dots Y_l
\mid R_n,\, T_1,\dots, T_l,\, x_i^{-1}y_i \ (i=1,\dots,l)\>,
$$
where the $\< Y_i\mid T_i\>$ are disjoint duplicates of
$\< Y \mid T\>$ (our fixed presentation of $J$) with $y_i\in Y_i$ corresponding to $y\in Y$.
\end{definition}

If the group $\E_n$ presented by $\Pi_n$ is non-trivial, then each $x_i$ has infinite order in $\E_n$. In
this case $\Pi_n'$ is the natural  presentation  for the
group obtained from $\E_n$ in $l$ stages
by forming  amalgamated free products
$$\E_{n,1}=\E_{n}\ast_\Z J_1\ \text{  then }\   \E_{n,i}=\E_{n,i-1}\ast_\Z J_i
\ \text{ until } \E_n' := \E_{n,l-1}\ast_\Z J_l,
$$
where the $J_i$ are isomorphic copies of $J$ and the amalgamation
defining $\E_{n,i}$ identifies $x_i\in \E_n\subset \E_{n,i-1}$
with $y\in Y$ in the copy of $J$ that is being
attached.  

On the other hand,
if $\E_n=1$ then $\E_n'=1$, as one sees by making iterated use of the
observation that  the pushout of the diagram
$\{1\}\leftarrow\Z\rightarrow J$ is trivial if the generator of $\Z$ is mapped to $y\in J$,
since $J/\ln y\rn =1$.  

\smallskip
\noindent{\em{Notation:}} Let $\X$ and $\S_n$ be, respectively, the generators
and relators of
$\Pi_n'$ as given in Definition \ref{d:Pi'}.

\begin{lemma}\label{l:ready} Let the groups $\E_n$ and $\E_n'$
and the presentation $\Pi_n'\equiv \<\X\mid \S_n\>$ be as above.
\begin{enumerate}
\item  
$\E_n'$ is trivial if and only if 
$\E_n$ is trivial. 
\item For all $n\in\N$, the group $\E_n'$ has no non-trivial finite quotients.
\item The cardinality of $\S_n$ is independent of $n$, and $|\S_n| > |\X|$.
\item If $\E_n'$ is non-trivial then the presentation $\Pi_n'$ is aspherical.
\end{enumerate}
\end{lemma}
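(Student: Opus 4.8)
The plan is to read off all four assertions from the description, given in the paragraph that precedes the lemma, of $\Pi_n'$ as the natural presentation of the iterated amalgamated free product $\E_n\ast_\Z J_1\ast_\Z\cdots\ast_\Z J_l$, in which the $i$-th amalgamation identifies the infinite cyclic subgroup $\<x_i\>$ of the running group with the subgroup $\<y\>$ of a fresh copy $J_i$ of $J$. The ingredients are Theorem~\ref{aspher}, Proposition~\ref{p:hig}, and the asphericity remarks of Section~\ref{ss:ass}. Parts~(1) and~(4) are formal consequences of standard facts about amalgams, using the defining property $J/\ln y\rn=1$; part~(3) is an elementary count; and part~(2) is where the construction earns its keep.

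For part~(1): if $\E_n\neq1$, then by Theorem~\ref{aspher}(2),(4) it is torsion-free and each $x_i$ is non-trivial in it, so $\<x_i\>$ is infinite cyclic; hence each stage $\E_{n,i}=\E_{n,i-1}\ast_\Z J_i$ is a genuine amalgamated free product along an infinite cyclic subgroup, $\E_n$ embeds in the final group $\E_n'$, and in particular $\E_n'\neq1$. Conversely, if $\E_n=1$ then every $x_i$ is trivial in $\E_n'$, so each relator $x_i\inv y_i$ forces $y_i=1$ there; since $J/\ln y\rn=1$ this makes the whole copy $J_i=\<Y_i\mid T_i\>$ die in $\E_n'$, and as $\X=X\cup Y_1\cup\cdots\cup Y_l$ we conclude $\E_n'=1$. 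For part~(4): assume $\E_n\neq1$, so $\Pi_n$ is aspherical by Theorem~\ref{aspher}(4), while $\<Y\mid T\>$ is aspherical by Proposition~\ref{p:hig}(2) (or by the analogous property of the finitely presented infinite simple groups of Burger and Mozes). One now argues by induction on $i$: the natural presentation of $\E_{n,i-1}$ is aspherical, $x_i\in X$ is one of its generators, and $y_i$ is a generator of the aspherical presentation $\<Y_i\mid T_i\>$ of $J_i$; so by the criterion of Section~\ref{ss:ass} (amalgamation of aspherical presentations along a rank-one free subgroup, adding the single relator $x_i\inv y_i$) the natural presentation of $\E_{n,i}$ is again aspherical. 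Taking $i=l$ gives asphericity of $\Pi_n'$.

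For part~(2): let $\phi\colon\E_n'\to F$ be a homomorphism onto a finite group. For each $i$ the image $\phi(J_i)$ is a finite quotient of $J_i\cong J$, hence trivial by Proposition~\ref{p:hig}(1); in particular $\phi(y_i)=1$, and since $x_i=y_i$ in $\E_n'$ also $\phi(x_i)=1$. Thus $\phi$ kills every generator in $\X$, so $F=1$. For part~(3): from Definition~\ref{d:Pi'} the presentation $\Pi_n'$ has $|R_n|+l\,|T|+l$ relators and $|X|+l\,|Y|=l+l\,|Y|$ generators; here $|R_n|$ is the constant $k$ of Theorem~\ref{aspher}(1), and $l=|X|$, $|Y|$, $|T|$ are all independent of $n$, so $|\S_n|$ is independent of $n$. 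The desired inequality $|\S_n|>|\X|$ reduces to $k>l(|Y|-|T|)$, which holds because $k\ge1$ while our fixed choice of $J$ (Higman's group) has $|Y|=|T|$.

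The only place that needs a moment's thought is not the verification itself but the design constraint it imposes on $J$ for parts~(3) and~(4): one must take $J$ to be presented by an \emph{aspherical} presentation $\<Y\mid T\>$ in which some single generator $y$ normally generates $J$ and in which $|T|$ is not too small relative to $|Y|$ --- which is exactly why Higman's group (with its aspherical $4$-generator, $4$-relator presentation of Proposition~\ref{p:hig}) or a Burger--Mozes simple group is the natural input; and in the induction of part~(4) one must check that after each amalgamation the remaining generators $x_{i+1},\dots,x_l$ survive as honest generators of the running presentation, so that the Section~\ref{ss:ass} criterion applies unchanged at the next stage. Everything else follows directly from results already established.
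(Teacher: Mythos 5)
Your proof is correct and follows essentially the same route as the paper: part (1) via the amalgamation/pushout description and the fact that $J/\ln y\rn=1$, part (4) via the asphericity of iterated amalgams along infinite cyclic subgroups (Section \ref{ss:ass}), part (2) by observing that the copies $J_i$ generate $\E_n'$ and have no non-trivial finite quotients, and part (3) by counting. You are somewhat more explicit than the paper (which dismisses (3) as ``manifest'' and (1) as ``covered by the preceding discussion''), but there is no substantive difference in the argument.
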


\begin{proof} Item (1)
is covered by the preceding discussion while (3) is manifest in the definition
of $\Pi_n'$. In the light of the observations in Section \ref{ss:ass},
item (4) follows immediately from the above description of $\Pi_n'$ as the presentation of
an iterated free product with cyclic amalgamation, together with the fact that
$\< Y \mid T\>$ and
$\Pi_n$ are aspherical.  

Suppose $\E_n'$ is non-trivial and consider the copies  $J_i$ of $J$ visible
in the repeated amalgamations formed in passing from $\E_n$ to $\E_n'$.
The relations
$x_i=y_i$ show that the union of the $J_i$ generates $\E_n'$. Since $J$ has
no non-trivial finite quotients, neither does $\E_n'$.
\end{proof}

\subsection{Presenting universal central extensions}\label{ss:chuck}

I learnt the  following result from C.F.~Miller III, and I am grateful to
him for letting me reproduce his proof here.

Given a group $\G$, we shall use the standard notation $[A,B]$ to denote the 
subgroup of $\G$ generated by the set of commutators $\{[a,b] : a\in A, b\in B\}$.

\begin{prop}\label{p:miller} Let $G= \< x_1,\dots,x_n \mid r_1,\dots,r_m \>$ be a finitely presented
group, let $F$ be the free group on $\{x_1,\dots,x_n\}$ and let $R\subset F$ be
the normal closure of $\{r_1,\dots,r_m\}$. Suppose that $\G$ is perfect and choose
$c_i\in [F,F]$ such that $x_ic_i\in R$. Then the following is a finite presentation
for the universal central extension of $G$:
$$
\<
x_1,\dots,x_n \mid x_ic_i,\ [x_i,\,r_j] \ (i=1,\dots,n; \ j=1,\dots,m) 
\>.
$$
\end{prop}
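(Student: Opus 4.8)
The plan is to identify the group $U$ defined by the displayed presentation with the model $[F,F]/[F,R]$ for the universal central extension supplied by Proposition \ref{p:Gtilde}(2)--(3), and to do this by exhibiting mutually inverse homomorphisms. First I would set up notation: write $U = \< x_1,\dots,x_n \mid x_ic_i,\ [x_i,r_j]\ (i,j)\>$, and let $\tilde G = [F,F]/[F,R]$ with its quotient map $\pi\colon \tilde G\to G$. The relators $[x_i,r_j]$ are the crucial ones: since the $r_j$ normally generate $R$ in $F$, imposing $[x_i,r_j]=1$ for all $i,j$ forces every element of (the image of) $R$ to commute with every generator, hence to be central in $U$; so $U$ is a central extension of the group obtained by additionally killing the $r_j$, and that group is $G$ (the relators $x_ic_i$, with $c_i\in[F,F]\subset R$, become trivial once $R$ is killed). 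Thus there is a natural surjection $q\colon U\to G$ with central kernel, so $U$ is a central extension of $G$ and, by universality, there is a unique $f\colon \tilde G\to U$ over $G$.

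Next I would construct a homomorphism $g\colon U\to \tilde G$ going the other way, by sending the generator $x_i$ of $U$ to a suitable element of $\tilde G$ and checking the relations. The natural candidate is to send $x_i\mapsto \overline{c_i^{-1}}\in\tilde G$ (the class of $c_i^{-1}\in[F,F]$), motivated by the relation $x_ic_i=1$ which we want to hold in the image. One then has to verify that the images of the relators $x_ic_i$ and $[x_i,r_j]$ are trivial in $\tilde G$: for $x_ic_i$ this is essentially the definition of the assignment together with the fact that $c_i\in[F,F]$ so $\overline{c_i}$ makes sense in $\tilde G=[F,F]/[F,R]$; for $[x_i,r_j]$ one uses that $r_j\in R$, and in $\tilde G=[F,F]/[F,R]$ the image of $R\cap[F,F]$ is exactly $H_2(G;\Z)$, which is central, so any commutator $[\,\cdot\,,r_j]$ dies. (One must be slightly careful that $x_i\mapsto\overline{c_i^{-1}}$ is only well-defined after we know $[x_i,r_j]$ maps to $1$; the cleanest route is to define $g$ on the free group $F$ by $x_i\mapsto\overline{c_i^{-1}}$ — wait, that does not land in $[F,F]$ — so instead I would define $g$ directly from the presentation $U$ by checking relators, which is legitimate.) Granting this, $g\colon U\to\tilde G$ is a homomorphism.

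Finally I would check that $f$ and $g$ are mutually inverse. The composite $f\circ g\colon U\to U$ and $g\circ f\colon\tilde G\to\tilde G$ can each be checked on generators: $g\circ f$ is a homomorphism $\tilde G\to\tilde G$ over $G$, hence equals the identity by the uniqueness clause in the universal property (Proposition \ref{p:Gtilde}(1)); and $f\circ g$ agrees with the identity on the generators $x_i$ of $U$ provided the assignments were set up compatibly, which one verifies by a direct computation modulo the centre and then notes that both maps induce the identity on $G$ and agree on the (central, hence easily controlled) kernel.

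\emph{Main obstacle.} I expect the delicate point to be the well-definedness and bookkeeping around the map $g\colon U\to\tilde G$ — specifically making sure the target element representing $x_i$ lies in the correct group $[F,F]/[F,R]$ and that all relators of $U$ map to $1$, where the relation $x_ic_i\in R$ (not merely $x_ic_i\in[F,F]$) is used to see that $c_i$'s ambiguity lies in $R\cap[F,F]$, i.e. is killed in $\tilde G$. Once that is pinned down, everything else is an application of the universal property and Proposition \ref{p:Gtilde}.
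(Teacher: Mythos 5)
Your overall strategy --- realise the displayed presentation as a central extension $U=F/K\to G$ (which you do correctly: $K\subset R$, and the relators $[x_i,r_j]$ force the images of the $r_j$, hence all of $R/K$, to be central), obtain $f\colon\tilde G\to U$ from the universal property, and exhibit an explicit inverse $g$ --- is viable, and it is genuinely different from the paper's proof, which instead shows directly that the inclusion $[F,F]\hookrightarrow F$ induces an isomorphism $[F,F]/[F,R]\to F/K$ by proving $K\cap[F,F]=[F,R]$ via a rank count on the map $K/[F,R]\to F/[F,F]\cong\Z^n$. But your route has a genuine gap at exactly the point you flag as delicate. If $g\colon F\to [F,F]/[F,R]$ is defined on free generators by $x_i\mapsto\overline{c_i^{-1}}$, then $g(c_i)$ is the word $c_i$ \emph{evaluated} at the elements $\overline{c_1^{-1}},\dots,\overline{c_n^{-1}}$; it is not ``by definition'' the class $\overline{c_i}$ of the element $c_i$, so $g(x_ic_i)=\overline{c_i^{-1}}\,g(c_i)$ is not obviously trivial. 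The repair is a short argument you do not supply: the two homomorphisms $F\to F/[F,R]$ sending $x_i\mapsto x_i[F,R]$ and $x_i\mapsto c_i^{-1}[F,R]$ differ on each generator by $(x_ic_i)[F,R]$, which is central because $x_ic_i\in R$ and $R/[F,R]$ is central; hence their ``difference'' is a homomorphism from $F$ into the (abelian) centre of $F/[F,R]$ and so kills $[F,F]$. Therefore the two maps agree on $[F,F]$, giving $g(c_i)=\overline{c_i}$; the same observation shows $g(r_j)$ is central in $F/[F,R]$, which kills the relators $[x_i,r_j]$.

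The second, smaller gap is the verification that $f\circ g=\mathrm{id}_U$. Uniqueness in the universal property does give $g\circ f=\mathrm{id}_{\tilde G}$, hence $f$ is injective and $g$ is surjective, but your argument for the other composite (``provided the assignments were set up compatibly \dots\ agree on the kernel'') is circular as stated: you have no formula for $f$, and two endomorphisms of $U$ over $G$ could a priori differ by a homomorphism into the centre. The missing ingredient is that $U$ is perfect --- immediate from the relators $x_ic_i$ with $c_i\in[F,F]$, which show $F=K[F,F]$. Then, writing $q\colon U\to G$, one has $U=\mathrm{im}(f)\cdot\ker(q)$ with $\ker(q)$ central, so $[U,U]\subset\mathrm{im}(f)$ and perfectness forces $f$ to be surjective; equivalently, any two homomorphisms over $G$ from the perfect group $U$ to a central extension of $G$ coincide. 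With these two points filled in, your proof is correct, and it is a reasonable alternative to the paper's more computational identification of $F/K$ with $[F,F]/[F,R]$.
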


\begin{proof}  Let $K\subset F$  be the subgroup that is the normal
closure of
the relators in the above presentation. We must prove that
$F/K$ is isomorphic to $[F,F]/[F,R]$, the universal central extension of $G$.

Note that $[F,R]\subset K$.

Let $\tilde X:=\{x_1c_1,\dots,x_nc_n\}$.
Since $x_ic_i\in R$, the image of $\tilde X$ in $F/[F,R]$ is central.
In particular $K/[F,R]$ is abelian, generated by the image of 
$\tilde X$. 

Since the image of $\tilde X$ generates $F/[F,F]$, the
natural map $K/[F,R]\to F/[F,F]$ is onto. Moreover, 
as the image of $\tilde X$ is a basis for $F/[F,F] \cong \Z^n$, it
must also be a basis for $K/[F,R]$.
Hence the natural map $K/[F,R]\to F/[F,F]$ is an isomorphism. In
particular the kernel of this map is trivial, so
 $K\cap [F,F]\subset [F,R]$. But $[F,R]\subset K$, 
so $K\cap [F,F]\subset [F,R]$.

Now consider the map $[F,F] \to F/K$. As  
$x_ic_i\in K$ and $c_i\in[F,F]$, 
the image of this map contains $x_iK$ for $i=1,\dots,n$. Thus the map
is onto.  Its kernel is $[F,F]\cap K$, which we just proved is
$[F,R]$. Therefore $F/K$ is isomorphic to $[F,F]/[F,R]$.
\end{proof}

\begin{cor}\label{c:algo}
There exists an algorithm that, given a finite presentation $\<X\mid \Sigma\>$
of a perfect group $G$, will output a finite presentation $\< X\mid \tilde\Sigma\>$ for the universal central
extension of $G$.
\end{cor}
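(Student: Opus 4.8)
This is immediate from Proposition~\ref{p:miller}, once we observe that the elements $c_i\in[F,F]$ with $x_ic_i\in R$ occurring there can be produced algorithmically from the presentation $\<X\mid\Sigma\>$, granted the promise that $G$ is perfect. Their existence is quick to see: the kernel of the abelianisation map $F\to G^{\rm ab}$ is $[F,F]R$, and $G^{\rm ab}=0$ because $G$ is perfect, so $F=[F,F]R$; hence each generator can be written $x_i=c_i'\rho_i$ with $c_i'\in[F,F]$ and $\rho_i\in R$, and then $c_i:=(c_i')^{-1}\in[F,F]$ satisfies $x_ic_i\in R$ since $R$ is normal in $F$.

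To find such $c_i$ effectively I would run, for each $i$, a diagonal search in the spirit of Lemma~\ref{l:Zfg}. The subgroup $[F,F]$ is precisely the set of words in $X^{\pm1}$ in which every generator has total exponent sum zero, so membership in $[F,F]$ is trivially decidable; membership in $R$, by contrast, is in general only recursively enumerable --- it is the word problem in $G$, which need not be solvable --- so one cannot test ``$x_iw\in R$'' outright. Instead, enumerate pairs $(w,\rho)$ in which $w$ ranges over zero-exponent-sum words in $X^{\pm1}$ and $\rho$ ranges over products of conjugates $g\,r_j^{\pm1}\,g^{-1}$ of the given relators, and for each pair test whether $x_iw$ and $\rho$ are freely equal in $F$ --- a decidable test. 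By the previous paragraph such a pair exists, so the search terminates, and its first success yields an admissible $c_i$.

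With $c_1,\dots,c_n$ in hand, the algorithm outputs the presentation
$$\<x_1,\dots,x_n\mid x_ic_i,\ [x_i,r_j]\ (i=1,\dots,n;\ j=1,\dots,m)\>$$
furnished by Proposition~\ref{p:miller}; that is, it sets $\tilde\Sigma=\{x_ic_i\}_i\cup\{[x_i,r_j]\}_{i,j}$. The only point needing care is the termination of the searches for the $c_i$: this rests squarely on the hypothesis that $G$ is perfect, together with the fact that the normal closure $R$ of a finite set is recursively enumerable, so that a coincidence $x_iw=_F\rho$ is certain to show up along a finite diagonal.
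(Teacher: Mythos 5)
Your proof is correct and takes essentially the same approach as the paper's: a diagonal search over pairs consisting of an element of $[F,F]$ and an element of the normal closure of $\Sigma$, testing for a free-group coincidence that witnesses $x_ic_i\in R$, with termination guaranteed by the perfectness of $G$, followed by outputting the presentation of Proposition~\ref{p:miller}. Your preliminary observation that $F=[F,F]R$ (hence the searches must succeed) is exactly the point the paper leaves implicit, and your exponent-sum characterisation of $[F,F]$ is a harmless variant of the paper's direct enumeration.
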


\begin{proof} Let $F$ be the free group on $X$.
The algorithm generates an enumeration $d_0,d_1,\dots$ of words representing
the elements of $[F,F]$ and an enumeration $\rho_0,\rho_1,\dots$
of the normal closure of $\Sigma$; it runs through the list of all pairs $(d_i,\rho_j)$,
proceeding along finite diagonals. It works as follows:  
for each $x\in X$ in turn, it runs through the products $xd_i\rho_j$ checking to see if
any of them is equal to the identity in $F$ (that is, freely equal to the empty word). Since
 the given group $G$ is perfect, the algorithm will eventually find indices $i(x)$ and
$j(x)$ such that $xd_{i(x)}\rho_{j(x)}$ is equal to the identity in $F$. The algorithm
outputs  
$$
\tilde\Sigma = \{ xd_{i(x)}: x\in X\} \cup \{ [\sigma,\, x_k] : \sigma\in\Sigma, x\in X\}.
$$
\end{proof}

\subsection{The proof of Theorem \ref{t:triv1}}

In Subsection \ref{ss:noFin} we constructed a recursive sequence of finite
presentations $\Pi_n'\equiv\<\X\mid \S_n\>$ defining quotients $\E_n'$ of a fixed free group $F=F(\X)$, such
that none of the $\E_n'$ have any non-trivial finite quotients but there was no algorithm
to determine which of the $\E_n'$ are trivial;   if $\E_n'\neq 1$ then $\Pi_n'$ is aspherical.
Furthermore,  if $\E_n'\neq 1$ then it has
 an unsolvable word problem, because it contains a copy of the group $B$ with which the
proof of Theorem \ref{aspher} began.

To complete the proof of Theorem \ref{t:triv1} we apply the algorithm of Corollary
\ref{c:algo}  to transform
each of the presentations $\Pi_n'$ into a presentation $\tilde\Pi_n\equiv\<\X\mid \tilde\S_n\>$
for the universal central extension $\tilde \E_n'$ of $\E_n'$.  Note that
the presentations $\tilde\Pi_n$  still
define the groups $\tilde\E_n'$ as quotients of the fixed free group $F(\X)$,
and that $|\tilde\S_n|$ is independent of $n$.

If we define $Q_n:=\tilde \E_n'$ and $\R_n:=\tilde\Sigma_n$, then
Proposition \ref{p:Gtilde} assures us  that the groups $Q_n = \langle \X\mid \R_n\>$ have the properties
stated in  Theorem \ref{t:triv1}.
 \hfill $\square$

\section{The isomorphism problem}\label{s:iso}

The following is a more precise formulation of Theorem \ref{t:iso}.

\begin{thm}\label{t:iso1} There exists a finitely generated free group $F=F(\Y)$
and two recursive sequences of finite subsets $(\T_n)$ and $(\U_n)$, 
with cardinalities independent of $n$, such that:
\begin{enumerate}
\item for all $n$, the group $G_n:=\<\Y\mid \T_n\>$ is finitely presented, residually
finite, and has a compact classifying space;
\item for all $n$, the subgroup $P_n\subset G_n$ generated by the image of $\U_n$ is finitely presentable;
\item for all $n$, the inclusion $P_n\hookrightarrow G_n$ induces an isomorphism of 
profinite completions $\hat P_n\to \hat G_n$;
\item there is no algorithm that can determine for which $n$ the inclusion $P_n\hookrightarrow G_n$
is an isomorphism;
\item there is no algorithm that can determine for which $n$ the groups $P_n$ and $G_n$ are
abstractly isomorphic.
\end{enumerate}
\end{thm}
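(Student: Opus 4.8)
The idea is to feed the sequences $(\mathcal R_n)$ produced by Theorem \ref{t:triv1} into the Bridson--Grunewald machinery of Theorem \ref{t:BG}, carried out algorithmically. Each $Q_n=\langle\X\mid\R_n\rangle$ is super-perfect, has a compact classifying space (so is of type $F_3$), and has no non-trivial finite quotients; hence $Q_n$ is legitimate input to Theorem \ref{t:BG}, and moreover the Rips--Wise machine (Theorem \ref{t:rips}) applies uniformly since the cardinalities $|\R_n|$ and $|\X|$ are fixed. First I would apply Theorem \ref{t:rips} to each $\Pi_n\equiv\langle\X\mid\R_n\rangle$ to obtain hyperbolic groups $G_n=\langle\cech\X\mid\cech\R_n\rangle$ with a fixed generating set $\Y:=\cech\X$ and with $|\cech\R_n|=|\R_n|+6|\X|$ constant in $n$; this gives the sequence $(\T_n)$ of (1), and residual finiteness, torsion-freeness and a compact $2$-dimensional classifying space come for free. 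Next, writing $p_n:G_n\to Q_n$ for the canonical surjection and $P_n\subset G_n\times G_n$ for the fibre product, Theorem \ref{t:BG} (via the 1-2-3 Theorem, using that $Q_n$ is of type $F_3$) guarantees $P_n$ is finitely presented and that $\hat P_n\to\hat G_n\times\hat G_n$ is an isomorphism; here the ambient group of the theorem is $G_n\times G_n$, so $\Y$ should be taken to be the generating set of $G_n\times G_n$ and $\U_n$ an explicit finite generating set for $P_n$ (the diagonal generators $(\cech x,\cech x)$ together with $(a_i,1)$ for the Rips generators $a_i$, which lie in $\ker p_n$). That the $\U_n$ can be produced recursively with fixed cardinality follows from the explicit nature of the constructions in Theorem \ref{t:rips} and \cite{BG}; this yields (2) and (3).

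For (4) and (5), the point is that $Q_n$ controls the triviality of the whole package. If $Q_n=\{1\}$ then $G_n=N_n$ is itself hyperbolic with $\hat G_n$ trivial, and the fibre product $P_n=G_n\times G_n=G_n\times G_n$ equals the ambient group, so $P_n\hookrightarrow G_n\times G_n$ is an isomorphism and in particular $P_n\cong G_n\times G_n$. Conversely, if $Q_n\neq\{1\}$ then $Q_n$ has an unsolvable word problem, and the Bridson--Grunewald analysis shows $P_n$ is not finitely presented with a solvable word problem while $G_n\times G_n$ is (being hyperbolic); more directly, $N_n$ is then not finitely presented whereas $G_n$ is, and the standard argument (as in \cite{BG}, or as in the proof of Theorem \ref{t:conj1} above, where the membership problem for $P_n$ is shown unsolvable) shows $P_n\not\cong G_n\times G_n$ and $P_n\hookrightarrow G_n\times G_n$ is not onto. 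Since by Theorem \ref{t:triv1} there is no algorithm deciding for which $n$ we have $Q_n=\{1\}$, there is no algorithm deciding for which $n$ the inclusion $P_n\hookrightarrow G_n\times G_n$ is an isomorphism, nor for which $n$ the groups are abstractly isomorphic.

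**The main obstacle.**
The delicate point is (5): showing that $Q_n\neq\{1\}$ forces $P_n\not\cong G_n\times G_n$, i.e. ruling out an \emph{abstract} isomorphism, not merely non-surjectivity of the given inclusion. The clean way is to observe that when $Q_n\neq\{1\}$ the kernel $N_n$ of $p_n:G_n\to Q_n$ is finitely generated but not finitely presentable (this is part of Theorem \ref{t:BG}), whereas in the degenerate case the corresponding kernel is all of $G_n$; one then needs an invariant distinguishing $P_n$ from $G_n\times G_n$. The sharpest route, and the one I expect to use, is via the word/conjugacy problem exactly as in the proof of Theorem \ref{t:conj1}: when $Q_n$ has unsolvable word problem the membership problem for $P_n\subset G_n\times G_n$ is unsolvable and hence $P_n$ cannot be (the hyperbolic, hence biautomatic) $G_n\times G_n$ — though to be careful one should phrase this as: an isomorphism $P_n\cong G_n\times G_n$ would give $P_n$ a solvable word problem and, being a finitely presented subgroup of a product of two hyperbolic groups of the specific Rips--Wise form, this contradicts the undecidability established for $Q_n$. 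Pinning down precisely which finiteness or algorithmic invariant to invoke, and checking it is genuinely an isomorphism invariant rather than an artifact of the given inclusion, is where the real care is needed; everything else is bookkeeping to confirm that the constructions of Theorems \ref{t:rips}, \ref{t:BG} and \ref{t:triv1} are uniform in $n$ with the stated fixed cardinalities.
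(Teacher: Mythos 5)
Your overall strategy is exactly the paper's: apply Theorem \ref{t:rips} to the presentations $\<\X\mid\R_n\>$ of Theorem \ref{t:triv1}, set $G_n=\G_n\times\G_n$ with its natural presentation on the doubled generating set, and let $\U_n$ generate the fibre product $P_n$ of $p_n:\G_n\to Q_n$; items (1)--(3) and the recursiveness/fixed-cardinality bookkeeping then go through as you describe. The gap is in the step you yourself flag as delicate, namely ruling out an \emph{abstract} isomorphism $P_n\cong G_n$ when $Q_n\neq 1$, and the invariants you propose there do not work. The word problem cannot distinguish the two groups: $P_n$ is finitely presented (by the 1-2-3 Theorem) and residually finite (as a subgroup of the residually finite group $G_n$), so it \emph{always} has solvable word problem, and your claimed contradiction ``an isomorphism would give $P_n$ a solvable word problem'' is vacuous. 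The unsolvability of the membership problem for $P_n\subset G_n$ is likewise not an isomorphism invariant of $P_n$: it is a property of the embedding, so it cannot by itself preclude $P_n\cong G_n$. (Your aside that $\hat G_n$ is trivial when $Q_n=1$ is also false --- $\G_n$ is residually finite and non-trivial --- but nothing depends on it.)

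What the paper actually invokes is that when $Q_n$ is infinite certain centralizers in $P_n$ are not finitely presented (\cite{BG}, Section 6), whereas in $G_n=\G_n\times\G_n$ every centralizer of a non-trivial element is a direct product of cyclic groups and/or copies of $\G_n$, hence finitely presented; this \emph{is} an isomorphism invariant and settles (5), with (4) following a fortiori. An equally valid route, which you gesture at but do not carry out, is the conjugacy problem: since $Q_n\neq 1$ implies $Q_n$ has unsolvable word problem (Theorem \ref{t:triv1}), the argument of Theorem \ref{t:conj1} shows the conjugacy problem in $P_n$ is unsolvable, while it is solvable in $\G_n\times\G_n$; solvability of the conjugacy problem is invariant under abstract isomorphism of finitely generated groups, so this too yields (5). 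Either of these repairs the argument; as written, your proof of (5) is incomplete.
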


\begin{proof}
Let the presentations $\<\X\mid\R_n\>$ 
for the groups $Q_n$ be as in Theorem \ref{t:triv1}. The proof of the present theorem may be
summarized as follows: we take the presentation of
$G_n:=\G_n\times\G_n$ obtained by applying the algorithm of Theorem \ref{t:rips}
to $\<\X\mid\R_n\>$, and we define $\U_n$ to be the natural
generating set for the fibre product $P_n\subset G_n$ of
$p_n:\G_n\to Q_n$. What follows is merely a careful exposition of this construction.

Theorem \ref{t:rips} associates to $\<\X\mid\R_n\>$  a
group $\Gamma_n$ with presentation
$\mathcal G_n\equiv\<\X,a_1,a_2,a_3\mid \cech\R_n\>$.
We define $\Y$ to be the disjoint union of two copies
of $\cech\X=\X\cup\{a_1,a_2,a_3\}$ and denote
words $w$ in the letters of the first (resp.~second) copy $(w,1)$
(resp.~$(1,w)$).

Let $\T_n =  \{(1,r), (r,1) \mid r\in \cech\R_n\}\cup\{[(x,1),\, (1,z)] \mid x,z\in\cech\X\}$ and
note that $\langle \Y \mid \T_n\>$ is a presentation of $G_n:=\G_n\times\G_n$.

Let $\U_n = \{(a_i,1),\, (1,a_i)\mid i=1,2,3\}\cup \{(x,1)(1,x) \mid x\in\cech\X\}$ and note that the image of
$U_n$ in $G_n$
generates  the fibre product $P_n$ of 
the map $p_n:\Gamma_n\to Q_n$, which is defined by $p_n(x,1)=p_n(1,x)=p(x)$ for $x\in\X$ and $p(a_i)=1$
for $i=1,2,3$. 

Because
the derivation of $\T_n$ and $\U_n$ from $\R_n$ is entirely algorithmic,
  the sequences $(\T_n)$ and $(\U_n)$ are recursive.

The Bridson-Grunewald Theorem (\ref{t:BG})  tells us that  
$P_n\hookrightarrow G_n$ induces an isomorphism $\hat P_n\to
\hat G_n$, and   $P_n$ is finitely presentable because $Q_n$ has type $F_3$.
(Recall that $Q_n$ was constructed so as to have a finite classifying space.)

If $Q_n=1$, then $P_n=G_n$. But if $Q_n$ is infinite then the fact that certain centralizers in
$P_n$ are not finitely presented shows that it is not isomorphic to $G_n$ (\cite{BG}, Section 6). 
Theorem \ref{t:triv1} tells us that
there is no algorithm that can determine whether or not $Q_n\neq 1$. Hence there is 
no algorithm that can determine whether or not $P_n$ is isomorphic to $G_n$, and no
algorithm that can determine whether or not $P_n\hookrightarrow G_n$ is
an isomorphism. 
\end{proof}

\section{Super-Perfect Group with Unsolvable Generation Problem}\label{s:gen}

In this Section we prove two theorems, each of which implies that there
exist finitely presented super-perfect groups with no finite quotients
in which there is no algorithm to determine which finite subsets generate.

\subsection{An inability to distinguish super-perfect subgroups from those with free quotients}\label{ss:gen}

\begin{thm}\label{t:gen} There exists a finitely presented 
super-perfect group $\La$,
an integer $r>0$,
and a recursive sequence of $r$-element subsets $\mathcal S_n\subset \La$, 
given by words in the generators, so that each $\mathcal S_n$ {\bf{either}} 
generates $\La$ or {\bf{else}}  generates a subgroup that 
maps onto a non-abelian free group,
 and there is no algorithm that can determine which
alternative holds. Moreover, $\La$ has a compact classifying space
and no proper subgroups of finite index.
\end{thm}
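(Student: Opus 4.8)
The plan is to build $\La$ by the same machinery used for Theorems \ref{t:triv1} and \ref{t:iso1}, but to arrange the undecidable input so that the dichotomy is ``generates everything'' versus ``maps onto a free group'' rather than ``trivial'' versus ``infinite''. Concretely, I would start from the Collins--Miller presentations $\Pi_n\equiv\<X\mid R_n\>$ of Theorem \ref{t:CM}, but instead of (or in addition to) Rabin's trick that collapses $\E_n$ to the trivial group when $w=1$ in Boone's group $B$, I would use a variant in which the relevant presentation either collapses to the ambient group or retracts onto a free factor. The cleanest way to do this is to work at the level of \emph{subgroups} of a single fixed group: take a finitely presented super-perfect group $\La$ with no finite quotients and a fixed free subgroup $L\subset\La$ on which one can ``hang'' Rabin-type constructions, and let $\mathcal S_n$ be a recursive family of $r$-element subsets chosen so that $\<\mathcal S_n\>$ is the group obtained from $L$ by attaching the $n$-th Rabin gadget. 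When $w_n=1$ in $B$ the gadget forces $\<\mathcal S_n\>=\La$; when $w_n\neq 1$ in $B$ the gadget is built from free products and HNN extensions over free groups, so $\<\mathcal S_n\>$ visibly surjects a non-abelian free group (e.g.\ one of the vertex or edge groups survives in an abelianization-free quotient).

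First I would fix the target group $\La$: apply the construction of Section \ref{ss:noFin} (free products with $J$ and HNN extensions over free subgroups) to kill finite quotients, then pass to the universal central extension as in Proposition \ref{p:Gtilde} and Corollary \ref{c:algo} to make it super-perfect; by Proposition \ref{p:Gtilde}(6)--(8) the resulting $\La$ is finitely presented, torsion-free, has a compact classifying space, and has no proper finite-index subgroups. Crucially $\La$ should be constructed so as to contain a retract onto a fixed nonabelian free group $L$, or at least contain $L$ as a subgroup with a controlled normal form (Britton's Lemma / normal-form theorems for amalgams and HNN extensions), since I will need to recognize exactly when a specified subgroup has collapsed. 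Next I would specify, algorithmically in $n$, the $r$-element set $\mathcal S_n$: it consists of the fixed generators of $L$ together with a bounded number of extra words encoding the $n$-th Rabin presentation $\Pi^{w_n}$ relative to $L$. The verification splits into the two advertised cases, using exactly the bookkeeping of Section \ref{ss:CM}: if $w_n=1$ in $B$ then iterated pushouts along $\{1\}\leftarrow\Z\to J$-type moves force $\<\mathcal S_n\>$ to swallow a generating set of $\La$; if $w_n\neq1$ then $\<\mathcal S_n\>$ is an iterated amalgam/HNN extension over free groups containing $L$ as a retract, hence surjects $L$.

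The undecidability is then immediate from Theorem \ref{t:CM}(3) (equivalently from unsolvability of the word problem in $B$): an algorithm detecting which alternative holds for $\mathcal S_n$ would decide whether $w_n=1$ in $B$. I expect the main obstacle to be the middle step --- arranging the Rabin-style gadgets to act on a \emph{subgroup} of a fixed super-perfect $\La$ with no finite quotients, rather than producing a new group each time, while simultaneously (a) keeping the number $r$ of generators of $\mathcal S_n$ bounded independently of $n$, (b) ensuring that in the ``collapse'' case the subgroup really becomes all of $\La$ and not merely a large subgroup, and (c) ensuring that in the other case the surjection onto a free group is genuine, which requires that the central extension and the finite-quotient-killing steps do not accidentally introduce relations trivializing $L$. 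Point (c) is delicate because passing to the universal central extension can alter how subgroups sit inside the group; the safe route is to choose $L$ inside the original type-$F_3$ group before forming $\tilde{\phantom{G}}$, and to check that $L$ injects into $\tilde\La$ using the explicit short exact sequence $1\to H_2\to\tilde\La\to\La\to1$ together with the fact that $L$ is free, hence has trivial $H_2$ and lifts. Once the interplay of these three constraints is pinned down, the homological and residual-finiteness properties of $\La$ follow formally from Proposition \ref{p:Gtilde} exactly as in the proof of Theorem \ref{t:triv1}.
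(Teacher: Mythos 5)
Your overall strategy --- feed the Collins--Miller sequence into a construction built from Higman's group, kill finite quotients, pass to the universal central extension via Proposition \ref{p:Gtilde} and Corollary \ref{c:algo}, and let the dichotomy track whether $\E_n$ is trivial --- is the same as the paper's. But the heart of the proof, namely a concrete realization inside a \emph{single fixed} group of a recursive family of $r$-element subsets whose subgroups are either everything or surject a nonabelian free group, is left as a black box (``hang the $n$-th Rabin gadget on a free subgroup $L$''), and the one concrete device you propose for it cannot work: a super-perfect group $\La$ (indeed any perfect group, or any group with no proper finite-index subgroups) admits no retraction onto a nonabelian free subgroup $L$, since such a retraction would be a surjection $\La\to L$ and a nontrivial free group is neither perfect nor without finite quotients. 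Your fallback (``at least contain $L$ as a subgroup with controlled normal forms'') does not by itself produce a surjection of $\<\mathcal S_n\>$ onto a free group, which is what the theorem demands; containing a free subgroup is not the same as mapping onto one.

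The mechanism your sketch is missing is the following. Take the fibre product $P_n\subset F\times F$ of $F\to\E_n$, generated by the $(k+l)$-element set $S_n=\{(x,x)\mid x\in X\}\cup\{(r,1)\mid r\in R_n\}$; this equals $F\times F$ exactly when $\E_n=1$, and when $\E_n\neq 1$ it meets each cyclic subgroup $\<(x_i,1)\>$ and $\<(1,x_i)\>$ trivially. Form $\Delta$ by amalgamating $2l$ copies of Higman's group $J_i$ to $F\times F$ along these cyclic subgroups (identifying $d_i$ with $(x_i,1)$ or $(1,x_{i-l})$), and enlarge $S_n$ by the \emph{fixed} set $\C=\bigcup_i\{a_i,b_i,c_i\}$. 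If $\E_n=1$ the enlarged set generates $\Delta$; if $\E_n\neq1$, the trivial intersections above force $\<S_n\>$ and $\<\C\>$ to generate their free product, so $\<S_n\cup\C\>\cong P_n\ast D_1\ast\cdots\ast D_{2l}$ with each $D_i=\<a_i,b_i,c_i\>$ surjecting $\Z$ by Proposition \ref{p:hig}(3),(4); hence the subgroup surjects a free group of rank $2l$. The free quotient thus arises from a free-product decomposition of the \emph{subgroup}, not from a retraction of $\La$ onto a pre-chosen free subgroup. Finally one takes $\La=\tilde\Delta$ and adjoins to the lifted generating sets a fixed finite generating set $Z$ of the central kernel, so that $\<\mathcal S_n\>$ contains $\ker(\tilde\Delta\to\Delta)$ and surjects $M_n=\<S_n\cup\C\>$; this disposes of your worry (c) with no analysis of how $L$ sits inside $\tilde\Delta$, and keeps $r=|\mathcal S_n|$ independent of $n$ because $|R_n|$, $|\C|$ and $|Z|$ are all fixed.
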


\begin{remark}\label{r:notfp}
An additional feature of the sets $\mathcal S_n$ is that if 
$\<\mathcal S_n\>$
is not equal to $\La$ then it is not finitely presentable, 
but this will play no role in the present article.
\end{remark}

Theorem \ref{t:CM} provides us with an example of a sequence of
finitely presented groups $\E_n = \< X \mid R_n\>$, with $k=|R_n|$
fixed, for which there is no algorithm to determine which of the
$\E_n$ are trivial, but where one knows that each $x\in X$ has infinite
order in $\E_n$ if $\E_n\neq 1$.  

Let $F$ denote the free group on $X$
and recall once again that the fibre product $P_n\subset F\times F$ of the natural projection
$F\to \E_n$ is generated by
$$
S_n =\{(x,x) \mid x\in X\} \cup \{(r,1)\mid r\in R_n\}.
$$
Each $S_n$ has $k+l$ elements, where $l$ is the cardinality of $X=\{x_1,\dots,x_l\}$.
Moreover since the sequence $(R_n)$ is recursive, so is $(S_n)$.

Let $\Delta$ be the amalgamated free product of $F\times F$ with $2l$ copies of Higman's group,
$$
J_i=\langle a_i,b_i,c_i,d_i \mid a_ib_ia_i^{-1}=b_i^2,\, b_ic_ib_i^{-1}=c_i^2,\, 
c_id_ic_i^{-1}=d_i^2,\, d_ia_id_i^{-1}=d_i^2\rangle,
$$ 
where each  $J_i$ is attached to $F\times F$ along a cyclic
subgroup: for $i\le l$ the amalgamation identifies $d_i\in J_i$ with $(x_i,1)$
and for $i>l$ the amalgamation identifies $d_i\in J_i$ with $(1,x_{i-l})$.

Let $\C = \bigcup_{i=1}^{2l}\{a_i,b_i,c_i\}$,  let $S_n^+=S_n\cup\C$
and let $M_n\subset\Delta$ be the subgroup generated by $S_n^+$.

\begin{lemma}\label{l:Mn}
\begin{enumerate}
\item $\Delta$ has no non-trivial finite quotients.
\item  If $\E_n = 1$ then $M_n=\Delta$.
\item If $\E_n\neq 1$ then $M_n$ maps onto a free
group of rank $2l$.
\end{enumerate}
\end{lemma}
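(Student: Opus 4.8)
The plan is to read off all three statements from the structure of $\Delta$ as an iterated amalgamated free product, using the properties of Higman's group recorded in Proposition \ref{p:hig} together with the properties of the $\E_n$ from Theorem \ref{t:CM}. For (1): $\Delta$ is generated by the union of the $2l$ copies $J_i$ of Higman's group $J$, since each $(x_i,1)$ and each $(1,x_i)$ lies in some $J_i$ (it is identified with $d_i$ there), and the $(x_i,1),(1,x_i)$ generate $F\times F$. Since $J$ has no non-trivial finite quotients by Proposition \ref{p:hig}(1), any finite quotient of $\Delta$ kills every $J_i$ and hence is trivial. (Alternatively one notes that amalgamating groups with no finite quotients along any subgroups yields a group with no finite quotients.)

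For (2): suppose $\E_n=1$, so that each $x_i=1$ in $\E_n$ and therefore each $(x_i,1)$ and $(1,x_i)$ is trivial in $F\times F$ --- wait, that is false; rather, when $\E_n=1$ the generators $(x_i,x_i)$ and $(r,1)$ of $P_n$ need separate treatment. The clean route is: when $\E_n=1$ we do not know $x_i=1$ in $F$, so instead I argue inside $\Delta$ directly. In $J_i$ the element $d_i$ has normal closure all of $J_i$ (Proposition \ref{p:hig}, with $y$ any of $a,b,c,d$), and more relevantly $d_i\in\langle a_i,b_i,c_i\rangle\cdot(\text{consequences of the Higman relations})$: from $d_ia_id_i^{-1}=a_i^2$, i.e. $d_i a_i d_i^{-1} a_i^{-2}=1$, one does not immediately get $d_i\in\langle a_i,b_i,c_i\rangle$. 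So the honest statement I will use is the one already proved in Proposition \ref{p:hig}(4): the subgroup $\langle a_i,b_i,c_i\rangle$ of $J_i$ --- which is a copy of Higman's $D$-subgroup $\langle a,b,c\rangle\subset J$ --- together with the fact that, when $\E_n=1$, the relations defining $\E_n$ force $(x_i,1)=(1,x_i)=1$ in the quotient only after we also use that $M_n$ contains $\mathcal C$. I will instead prove (2) as follows: $M_n\supset\langle\mathcal C\rangle=\langle a_i,b_i,c_i: i\rangle$, and I claim each $d_i\in M_n$. When $\E_n=1$ we have $x_i=1$ in $\E_n$, so in $F\times F$ the element $(x_i,1)$ lies in $P_n$... the cleanest formulation: $(x_i,1)$ is a product of the generators $(x_j,x_j)$ and $(r,1)$ of $P_n$ --- no. I will state it as: when $\E_n=1$, $P_n=F\times F$ (immediate from the definition of the fibre product), hence $\langle S_n\rangle=F\times F$, hence $d_i=(x_i,1)$ or $(1,x_{i-l})\in M_n$ for all $i$; combined with $\langle a_i,b_i,c_i\rangle\subset M_n$ this gives $J_i\subset M_n$ for all $i$, and since the $J_i$ generate $\Delta$ we conclude $M_n=\Delta$.

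For (3): suppose $\E_n\neq1$. By Theorem \ref{t:CM} each $x\in X$ has infinite order in $\E_n$, so each amalgamation $\Delta=(\cdots)\ast_{\Z}J_i$ is along an infinite cyclic subgroup and the normal form theory for amalgamated products applies cleanly; in particular each $J_i\hookrightarrow\Delta$. Now build a retraction $\rho\colon\Delta\to\ast_{i=1}^{2l}\langle a_i,b_i,c_i\rangle^{\mathrm{ab}}$... more simply: define a homomorphism $\phi\colon\Delta\to\Z^{2l}$ that kills $F\times F$ and sends the copy $\langle a_i,b_i,c_i\rangle$ to the $i$-th $\Z$ via $a_i\mapsto1,b_i\mapsto0,c_i\mapsto0$ (this is well-defined because on $J_i$ it factors through $\langle a_i,b_i,c_i\rangle^{\mathrm{ab}}\cong\Z$ of Proposition \ref{p:hig}(4), so $d_i$ must also go to a consistent value --- here is the main obstacle, see below). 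Then $\phi(M_n)$ contains each $a_i\mapsto e_i$, so $\phi(M_n)=\Z^{2l}$. The honest target is a \emph{free} group of rank $2l$, not $\Z^{2l}$: to get that I replace the abelianization trick by observing that $M_n\supset\langle \mathcal C\rangle$ and use the normal-form structure of $\Delta$ to build a retraction of $\Delta$ onto the free product $\ast_{i=1}^{2l}\langle a_i\rangle\cong F_{2l}$, e.g. by mapping $J_i\to\langle a_i\rangle$ via the quotient $J_i\to J_i/\langle\!\langle b_i,c_i,d_i\rangle\!\rangle$, which is infinite cyclic generated by the image of $a_i$ by Proposition \ref{p:hig}(4) (the relations $a_ib_ia_i^{-1}=b_i^2$ etc. become trivial, $d_ia_id_i^{-1}=a_i^2$ becomes $a_i=a_i^2$ --- so this quotient is in fact trivial). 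This shows the naive retractions collapse, so one must be more careful: map $d_i\mapsto 1$ only, i.e. use the quotient $J_i\to J_i/\langle\!\langle d_i\rangle\!\rangle$. By Proposition \ref{p:hig}(3), $J\cong D_1\ast_{F_2}D_2$ with $d$ corresponding to $\b_2$, so $J/\langle\!\langle d\rangle\!\rangle\cong D_1\ast_{F_2}(D_2/\langle\!\langle\b_2\rangle\!\rangle)$; since killing $\b_2$ in $D_2=\langle\a_2,\b_2,\c_2\mid\a_2\b_2\a_2^{-1}=\b_2^2,\b_2\c_2\b_2^{-1}=\c_2^2\rangle$ kills $\c_2$ too (from $\b_2\c_2\b_2^{-1}=\c_2^2$ we get $\c_2=\c_2^2$, so $\c_2=1$) leaving $\langle\a_2\rangle\cong\Z$, and the amalgamating $F_2=\langle\a_1,\c_1\rangle=\langle\a_1,\c_1\rangle$ with $\a_1=\c_2=1$... this is getting delicate.

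Rather than chase the collapsing quotients, the structurally correct plan for (3) is: since $\E_n\neq1$, each $x_i$ has infinite order, so each $J_i$ embeds in $\Delta$ and the amalgam has a genuine Bass–Serre tree $\mathcal T$ with vertex groups $F\times F$ and the $J_i$. Inside each $J_i$, the subgroup $\langle a_i,b_i,c_i\rangle$ is free of rank $2$ by Proposition \ref{p:hig}(3)'s proof (it contains $\langle a_i,c_i\rangle\cong F_2$), and the amalgamating edge subgroup $\langle d_i\rangle$ intersects $\langle a_i,b_i,c_i\rangle$ in $\{1\}$ (a Britton's Lemma computation in the HNN description of $J_i$, since $d_i$ is the last stable letter). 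Therefore $M_n=\langle S_n^+\rangle$, restricted to act on $\mathcal T$, has the property that each conjugate of $\langle d_i\rangle$ meets $M_n$ trivially or in a group meeting the $J_i$-vertex-groups only along pieces transverse to $\langle a_i,b_i,c_i\rangle$. The Bass–Serre subgroup theorem then presents $M_n$ as the fundamental group of a graph of groups whose vertex groups include the free groups $\langle a_i,b_i,c_i\rangle\cong F_2$ and whose edge groups are trivial (the edges of $\mathcal T$ inside $M_n$ carry trivial group since $M_n\cap(\text{edge group})=M_n\cap\langle d_i\rangle^g$ is trivial when $d_i\notin M_n$, which holds because $(x_i,1)\notin\langle S_n\rangle$ precisely when $x_i\neq1$ in $\E_n$). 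Hence $M_n$ is a free product containing the $2l$ free groups $\langle a_i,b_i,c_i\rangle\cong F_2$ as free factors, so it surjects the free product $\ast_{i=1}^{2l}F_2$, which surjects (indeed retracts) onto a free group of rank $2l$. The main obstacle is exactly this last step --- verifying that $M_n\cap\langle d_i\rangle=\{1\}$ when $x_i\neq1$ in $\E_n$ (equivalently $(x_i,1)\notin P_n$), and more generally that the $\langle a_i,b_i,c_i\rangle$ survive as free factors of $M_n$; this is where I would invoke the explicit normal-form (Britton's Lemma) structure of $\Delta$ and a direct argument that a reduced word in $S_n^+$ cannot equal a power of $d_i$ unless it already lies in $\langle a_i,b_i,c_i\rangle$, contradicting $\langle a_i,b_i,c_i\rangle\cap\langle d_i\rangle=\{1\}$.
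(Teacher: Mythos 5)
After the false starts that you yourself discard, your final arguments coincide with the paper's: the $J_i$ generate $\Delta$ and have no finite quotients; when $\E_n=1$ the fibre product $P_n$ is all of $F\times F$, so $S_n^+$ generates $\Delta$; and when $\E_n\neq 1$ the trivial intersections $\langle S_n\rangle\cap\langle(x_i,1)\rangle=\{1\}$ (using that $x_i$ has infinite order in $\E_n$) and $\langle a_i,b_i,c_i\rangle\cap\langle d_i\rangle=\{1\}$ force $M_n$ to split as the free product of $P_n$ with the $2l$ copies of $D=\langle a_i,b_i,c_i\rangle$, which surjects onto a free group of rank $2l$. One slip to correct: $D$ is not free of rank $2$ (it merely contains $\langle a_i,c_i\rangle\cong F_2$, and $D^{\mathrm{ab}}\cong\Z$ by Proposition \ref{p:hig}(4)), but since all your argument needs is that each free factor $D_i$ maps onto $\Z$, the conclusion is unaffected.
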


\begin{proof} We saw in Proposition \ref{p:hig}  that $J$ has no non-trivial
finite quotients, and the union of the subgroups $J_i\cong J$ generate 
$\Delta$, so (1) is proved.

If $\E_n = 1$ then $S_n$ generates $F\times F$,
and it is clear that $F\times F$ and $\C$ together generate $\Delta$.
This proves (2).

If $\E_n\neq 1$ then the subgroup of $F\times F$ generated by $S_n$
has trivial intersection with each of the cyclic subgroups $\<(x_i,1)\>$
and $\<(1,x_i)\>$. Furthermore, 
in Proposition \ref{p:hig} we proved that the subgroup  
$D_i\subset J_i$ generated by $\{a_i,b_i,c_i\}$ (which is isomorphic to 
$D=\<a,b,c \mid aba^{-1}=b^2,\, bcb^{-1}=c^2\rangle$) 
intersects $\<d_i\>$ trivially. It follows 
(by the standard theory of amalgamated free products) that $\<S_n\>$ and $\<\mathcal C\>$ 
generate their free product
in $\Delta$. Thus $M_n$ is isomorphic to the free product of $P_n$
with $2l$ copies of $D$. We also proved in Proposition \ref{p:hig}
that $D$ maps onto $\Z$, and therefore $M_n$ maps onto a 
free group of rank $2l$.
\end{proof}

\noindent{\bf{Proof of Theorem \ref{t:gen}.}}
Let $\tilde\Delta$ be the universal central extension of $\Delta$.
From Lemma \ref{l:Mn}(1) and Proposition \ref{p:Gtilde}(5), we
know that $\tilde\Delta$ has no proper subgroups of finite index.

We have generators $\mathcal A=
X_\lambda\cup X_\rho\cup \mathcal C$ for $\Delta$,
where $X_\lambda=\{(x,1)\mid x\in X\}$ 
generates the subgroup $F\times\{1\}$
and $X_\rho=\{(1,x)\mid x\in X\}$ generates the subgroup $\{1\}\times F$.
We may regard $S_n$ (and hence $S_n^+$) as a set of words
in these generators by replacing $(x,x)$ in the definition of $S_n$
by $(x,1)(1,x)$ and by identifying $(r,1)$ with the corresponding 
reduced word in the letters $X_\lambda^{\pm 1}$.

We choose a lift $\tilde a\in \tilde\Delta$ of each $a\in\mathcal A$ and
define $\tilde{\mathcal A}=\{\tilde{a} \mid a\in\mathcal A\}$. We then
choose a finite generating set $Z$ for the kernel of $\tilde\Delta
\to\Delta$ and work
with the generating set $\mathcal B =\tilde{\mathcal A}\cup Z$ for
$\tilde\Delta$.

We define $\tilde S_n^+$ to be the set of words in the letters $\mathcal
A^{\pm 1}$ obtained by replacing each $a\in\mathcal A$ by 
$\tilde a$, and we define $\mathcal S_n = \tilde S_n^+\cup Z$.

By construction, the subgroup of  $\tilde\Delta$ generated by 
$\mathcal S_n$ maps onto $M_n\subset\Delta$ and contains
the kernel of $\tilde\Delta\to\Delta$. Thus $\mathcal S_n$ generates
$\tilde\Delta$ if $M_n=\Delta$ (equivalently, $\E_n=1$) and
$\<\mathcal S_n\>$ maps onto a free group of rank $2l$ if
$\E_n\neq 1$. And the sequence $\E_n$ was chosen deliberately
so that there is no algorithm that can determine which of these
alternatives holds. 

Since the standard presentations of $F\times F$ and $J$
are aspherical and $\Delta$ is obtained from these groups by a sequence of amalgamations
along cyclic subgroups, $\Delta$ has an aspherical presentation
(see Section \ref{ss:ass}). It follows from Proposition \ref{p:Gtilde}(8)
that $\tilde\Delta$ also has a compact classifying space.
 \hfill $\square$

\begin{remark} Returning to Remark \ref{r:notfp}, note that since $M_n$
is obtained from $\<\mathcal S_n\>\subset\tilde\Delta$ by factoring
out the finitely generated subgroup $\<Z\>$,
if $\<\mathcal S_n\>$ were finitely presentable then
$M_n$ would be too. But $M_n$
 is not finitely presentable if $\E_n\neq 1$, 
because it contains $P_n\subset F\times F$ as a free
factor, and this is not finitely presentable \cite{grun}. 
\end{remark}

\subsection{An inability to distinguish between subdirect products in an acyclic group}\label{ss:fibProd}

A subgroup $H$ of a direct product $G_1\times G_2$ is termed a
{\em{subdirect product}} if the restriction to $H$ of the coordinate projection
$G_1\times G_2\to G_i$ is onto for $i=1,2$. An important example
of a subdirect product is the fibre product $P\subset G\times G$ of a
surjection $p:G\to Q$. Such fibre products can be characterised as those
subdirect products of $G\times G$ that contain the diagonal.

We need the following variation on Theorem 3.1 of \cite{BM}.

\begin{lemma} \label{l:mbc}
Let $A$ and $B$ be  super-perfect groups, let $P\subset A\times B$
be a subdirect product, and let $L=A\cap P$. Then $H_1(P;\Z)\cong H_2(A/L;\Z)$.
\end{lemma}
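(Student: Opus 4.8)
Since $P$ is a subdirect product of $A\times B$, the restriction of the projection $A\times B\to B$ gives a surjection $q:P\to B$, and its kernel is precisely $L=A\cap P$ (identifying $A$ with $A\times\{1\}$). Thus we have a short exact sequence $1\to L\to P\to B\to 1$, and $L$ is normal in $P$. The key structural observation is that because $P$ is subdirect and $L$ is normal in $P$, it is also normal in $A$ (indeed, $P$ surjects onto $A$ via the other projection, and $L\trianglelefteq P$ maps onto a normal subgroup of $A$, which one checks is $L$ itself); so $A/L$ makes sense and the quotient map $P\to B$ descends from, or rather sits alongside, the map $P\to A/L$.

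First I would set up the five-term exact sequence associated to $1\to L\to P\to B\to 1$:
\[
H_2(P;\mathbb Z)\to H_2(B;\mathbb Z)\to H_0(B;H_1(L;\mathbb Z))\to H_1(P;\mathbb Z)\to H_1(B;\mathbb Z)\to 0,
\]
where $H_0(B;H_1(L;\mathbb Z))=(L_{\mathrm{ab}})_B$ is the group of coinvariants of $L_{\mathrm{ab}}$ under the conjugation action of $B$. Now $B$ is super-perfect, so $H_1(B;\mathbb Z)=0$ and $H_2(B;\mathbb Z)=0$. The sequence therefore collapses to an isomorphism
\[
H_1(P;\mathbb Z)\;\cong\;(L_{\mathrm{ab}})_B.
\]
So the whole problem reduces to identifying the $B$-coinvariants of $L_{\mathrm{ab}}$ with $H_2(A/L;\mathbb Z)$.

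For that, I would bring in the other projection: $P$ surjects onto $A$ with kernel $B\cap P=:M$, giving $1\to M\to P\to A\to 1$, and the composite $P\to A\to A/L$ is onto with some kernel $P'\supseteq M, L$. The cleanest route is to use the short exact sequence $1\to L\to A\to A/L\to 1$ and its five-term sequence, now using that $A$ is super-perfect:
\[
0=H_2(A;\mathbb Z)\to H_2(A/L;\mathbb Z)\to (L_{\mathrm{ab}})_{A/L}\to H_1(A;\mathbb Z)=0,
\]
which yields $H_2(A/L;\mathbb Z)\cong (L_{\mathrm{ab}})_{A/L}$. Finally I must reconcile the two coinvariant groups: $(L_{\mathrm{ab}})_B$ versus $(L_{\mathrm{ab}})_{A/L}$. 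The point is that the conjugation action of $P$ on $L$ factors through $P/L$ in one description and through $A$ (hence $A/L$, since $L$ acts trivially on $L_{\mathrm{ab}}$ by inner automorphisms) in the other; because $P$ is subdirect, $P/L\cong B$ acts on $L_{\mathrm{ab}}$ through the same automorphisms as $A/L$ does — more precisely, the image of $P$ in $\mathrm{Aut}(L_{\mathrm{ab}})$ is the same whether one pushes forward along $P\to B$ or along $P\to A\to A/L$, since both the diagonal-type generators of $P$ and the $L$-generators act compatibly. Hence the two sets of coinvariants coincide, giving $H_1(P;\mathbb Z)\cong (L_{\mathrm{ab}})_B\cong(L_{\mathrm{ab}})_{A/L}\cong H_2(A/L;\mathbb Z)$.

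\textbf{The main obstacle} I expect is the last step: checking carefully that the two conjugation actions on $L_{\mathrm{ab}}$ induce the \emph{same} automorphism group, so that the coinvariants agree. This is a book-keeping point about subdirect products — one needs that for $b\in B$, a lift $(a,b)\in P$ conjugates $L$ in a way that depends only on the class of $a$ in $A/L$, and that every class in $A/L$ arises this way (surjectivity of $P\to A/L$, which follows from subdirectness). Everything else is a mechanical double application of the five-term exact sequence together with the super-perfectness hypotheses $H_1=H_2=0$ for $A$ and $B$.
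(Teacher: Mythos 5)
Your proof is correct and follows essentially the same route as the paper: two applications of the five-term exact sequence (for $1\to L\to P\to B\to 1$ using super-perfectness of $B$, and for $1\to L\to A\to A/L\to 1$ using super-perfectness of $A$), linked by the observation that $P$ and $A$ act on $L_{\mathrm{ab}}$ through the same subgroup of automorphisms because $P\to A$ is onto, so the two coinvariant groups coincide. The "main obstacle" you flag is exactly the point the paper dispatches in one line, and your justification of it is sound.
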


\begin{proof} Note that $L:=A\cap P$  is normal in both $A$ and $P$.
The actions of $P$ and $A$ by conjugation on
$L$ define the same subgroup of ${\rm{Aut}}(L)$ and hence the
coinvariants $H_0(P/L; H_1(L,\Z))$ and $H_0(A/L; H_1(L,\Z))$ coincide.
We will prove that the first of these groups is isomorphic to 
$H_1(P,\Z)$ and the second is isomorphic to $H_2(A/L; \Z)$.

The five term exact sequence for $Q=A/L$ gives the exactness of  
$$\cdots H_2(A;\Z) \to H_2(A/L;\Z)\to H_0(A/L;H_1(L;\Z))
 \to H_1(A;\Z) \cdots.$$
Thus, since $A$ is super-perfect, $H_2(A/L; \Z)\cong H_0(A/L;H_1(L;\Z))$.

Similarly, the five term exact sequence for $P/L$ gives the exactness of 
$$\cdots  H_2(P/L;\Z)\to H_0(P/L;H_1(L;\Z))
 \to H_1(P;\Z) \to H_1(P/L;\Z)\to 0.$$
Thus, since $B\cong P/L$ is super-perfect, $H_1(P;\Z)\cong H_0(P/L;H_1(L;\Z))$.
\end{proof}

A discrete group $G$ is said to be {\em{acyclic}} if $H_i(G;\Z)=0$ for all $i\ge 1$. It
follows from the Mayer-Vietoris theorem that a free product of acyclic groups is
acyclic and from the K\"unneth formula that a direct product of acyclic groups is acyclic.
We proved in Proposition \ref{p:hig}(5) that Higman's group $J$ is acyclic.

\begin{thm}\label{t:acyclicFP}
There exists a finitely presented acyclic group $\H$, with no non-trivial
finite quotients, an integer $m$, and
a recursive sequence of $m$-element subsets $\th_n\subset\H\times\H$, given as
words in the generators of $\H\times \H$, with the following properties:
\begin{enumerate}
\item For all $n$, the subgroup $\Th_n$ generated by $\th_n$ is a subdirect
product that contains the diagonal subgroup $\H^{\Delta}:=\{(h,h)\mid h\in\H\}$.
\item For all $n$, the  finite quotients of $\Th_n$ are all abelian.
\item If $\Th_n\neq \H\times \H$ then $H_1(\Th_n;\Z)$  is infinite and torsion free.
\item There is no algorithm that, with input $\th_n$, can determine
whether or not $\Th_n$ is equal
(or isomorphic) to $\H\times \H$.
\end{enumerate}
\end{thm}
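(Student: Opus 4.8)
\textbf{Proof proposal for Theorem \ref{t:acyclicFP}.}

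The plan is to obtain $\H$ from a suitable input by first killing finite quotients and then passing to a universal central extension, exactly as in the template recalled after Theorem \ref{t:BG}, but arranged algorithmically as in Section \ref{s:triv}. Concretely, I would start from the Collins--Miller presentations $\Pi_n\equiv\<X\mid R_n\>$ of Theorem \ref{t:CM}, which are aspherical when the group $\E_n$ they present is non-trivial and have a fixed number of relators exceeding the number of generators. As in Subsection \ref{ss:noFin}, amalgamate with copies of Higman's group $J$ along cyclic subgroups corresponding to the generators $x_i$, producing aspherical presentations $\Pi_n'\equiv\<\X\mid\S_n\>$ of groups $\E_n'$ that have no non-trivial finite quotients, with $\E_n'=1\iff\E_n=1$, with $|\S_n|$ and $|\X|$ independent of $n$ and $|\S_n|>|\X|$ (Lemma \ref{l:ready}). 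Since $J$ is acyclic (Proposition \ref{p:hig}(5)) and $\E_n'$ is built from $\E_n$ by free products with amalgamation along infinite cyclic (hence free, hence acyclic) subgroups, a Mayer--Vietoris argument shows $H_*(\E_n';\Z)\cong H_*(\E_n;\Z)$ in positive degrees; in particular $H_1(\E_n';\Z)$ may be non-trivial, so this is not yet acyclic. The key observation is that, since $\Pi_n'$ is aspherical with $|\S_n|-|\X|=:d>0$ a fixed positive constant, Lemma \ref{l:H2} would give $H_2(\E_n';\Z)\cong\Z^d$ \emph{provided} the abelianization of $\E_n'$ is finite — but one must be careful here, since the Collins--Miller generators need not have finite-order image in the abelianization.

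To repair this I would first pass to the \emph{acyclification} by a standard Kervaire--type trick before taking the central extension: replace $\E_n'$ by $\E_n'':=\E_n'\ast_{\text{?}}$, or more cleanly, first perfect the group. Actually the cleanest route is: after forming $\E_n'$, further amalgamate (or HNN-extend) so as to kill $H_1$, using that each generator has infinite order when $\E_n'\neq1$, so that each $x_i$ can be identified with a generator $y$ of Higman's group whose normal closure is everything and whose image in the abelianization is as needed; since $H_1(J;\Z)=0$ this forces $H_1$ of the amalgam to die. Call the resulting aspherical presentation $\Pi_n''\equiv\<\mathcal Z\mid\mathcal T_n\>$ of a perfect group $\E_n''$ with no finite quotients, $\E_n''=1\iff\E_n=1$, $|\mathcal T_n|-|\mathcal Z|$ a fixed positive integer $d$, and $\E_n''$ has a finite classifying space (it is the fundamental group of a finite aspherical 2-complex). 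Now Lemma \ref{l:H2} gives $H_2(\E_n'';\Z)\cong\Z^d$. Apply the algorithm of Corollary \ref{c:algo} to obtain a finite presentation $\tilde\Pi_n$ of the universal central extension $\tilde\E_n''$. By Proposition \ref{p:Gtilde}, $\tilde\E_n''$ is super-perfect, torsion-free, has no finite quotients, and — because $\E_n''$ has a $2$-dimensional classifying space — has a \emph{compact} classifying space; moreover it sits in $1\to\Z^d\to\tilde\E_n''\to\E_n''\to1$. Since $\Z^d$ and $\E_n''$ are acyclic except in degree $0,1$ with $H_1$ of the quotient trivial, the Lyndon--Hochschild--Serre spectral sequence (together with super-perfection of $\tilde\E_n''$, which already kills $H_1,H_2$) gives acyclicity in all positive degrees — here I would instead simply cite that $\tilde\E_n''$ is super-perfect and that $H_3(\tilde\E_n'';\Z)$ etc. vanish because $\tilde\E_n''$ has a $2$-dimensional classifying space... no: $\tilde\E_n''$ need not be $2$-dimensional. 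So for degrees $\ge3$ I would run the spectral sequence for $1\to\Z^d\to\tilde\E_n''\to\E_n''\to1$ using $H_i(\E_n'';\Z)=0$ for $i\ge3$ (as $\E_n''$ is $2$-dimensional) and $H_*(\Z^d)$ being exterior, checking the low-degree entries by hand; this is the routine homological bookkeeping I will not grind through.

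Finally, to package this as a statement about \emph{subdirect products}, set $\H:=\tilde\E_n''$ only in the trivial-versus-not dichotomy — wait, $\H$ must be fixed. The correct fixed $\H$ is obtained as in Section \ref{s:gen}: I would \emph{not} let $\H$ depend on $n$. Instead, following the proof of Theorem \ref{t:acyclicFP}'s analogue, take a single finitely presented acyclic group $\H$ with no finite quotients (e.g. built once and for all from $J$ and the ambient free group in which all the $\E_n''$ embed, acyclified), let $F\to\H$ be the relevant free presentation, and let $\Th_n\subset\H\times\H$ be the fibre product of the composite $\H\to\E_n''$ (killing the normal subgroup that turns $\H$ into $\E_n''$ — but $\E_n''$ may not be a quotient of a fixed $\H$...). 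The honest route is the one used for Theorems \ref{t:conj1} and \ref{t:iso1}: put $\H$ to be the universal central extension of the fixed acyclic group-with-no-finite-quotients built from Higman's group, and let $\Th_n$ be the preimage in $\H\times\H$ of the diagonal-containing subdirect products coming from the quotients $\H\twoheadrightarrow \H/\ln w_n\rn$ where $w_n$ enumerates words in a group with unsolvable word problem sitting inside $\H$; then $\Th_n=\H\times\H$ iff $w_n=1$ in $\H$, $\Th_n$ always contains $\H^\Delta$, its finite quotients are all abelian (being quotients of $H_1(\Th_n;\Z)\oplus$ finite data, using that $\H$ has no finite quotients so any finite quotient of $\Th_n$ factors through the abelianization — argue via Lemma \ref{l:mbc} with $A=B=\H$ super-perfect, giving $H_1(\Th_n;\Z)\cong H_2(\H/L_n;\Z)$), and when $\Th_n\ne\H\times\H$ the group $\H/L_n$ is a non-trivial quotient of $\H$ with unsolvable word problem whose $H_2$ is infinite and torsion-free (by Lemma \ref{l:H2} applied to an aspherical presentation of $\H/L_n$ with finite abelianization and a fixed positive deficiency gap), yielding (3), while (4) is immediate from the unsolvability of the word problem in $\H$. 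The main obstacle is arranging all of this \emph{uniformly and algorithmically} while simultaneously controlling (i) acyclicity of the fixed group $\H$, (ii) the torsion-freeness and infinitude of $H_1(\Th_n;\Z)$ via Lemmas \ref{l:mbc} and \ref{l:H2} — which forces the aspherical presentations in play to have \emph{constant positive deficiency gap} and \emph{finite abelianization} — and (iii) the reduction of finite quotients of $\Th_n$ to abelian ones; keeping the deficiency gap both positive and independent of $n$ through the acyclification and central-extension steps is the delicate point.
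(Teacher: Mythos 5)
Your proposal does not arrive at the construction that makes the theorem work, and along the way it relies on several steps that are either false or left unresolved. The key idea you are missing is that no universal central extension and no ``acyclification'' is needed at all: the fixed ambient group is simply $\H=J_1\ast\cdots\ast J_l$, the free product of $l$ copies of Higman's group, which is already finitely presented, acyclic (free products of acyclic groups are acyclic, and $J$ is acyclic by Proposition \ref{p:hig}(5)) and has no non-trivial finite quotients. The relations $x_i^{-1}y_i$ in $\Pi_n'$ mean that, after obvious Tietze moves, each $\E_n'$ is presented as $\<\mathbb Y\mid V_n,T_1,\dots,T_l\>$, i.e.\ as a quotient $q_n:\H\twoheadrightarrow\E_n'$ of this single fixed $\H$; one then takes $\Th_n$ to be the fibre product of $q_n$, generated by $\th_n=\{(y,1)(1,y)\}\cup\{(v,1):v\in V_n\}$. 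Crucially, Lemma \ref{l:mbc} only requires the \emph{factors} $A=B=\H$ to be super-perfect, not the quotient $\E_n'$; and Lemma \ref{l:H2} applies directly to the aspherical presentation $\Pi_n'$ of $\E_n'=\H/L_n$, whose abelianization is trivial (it has no finite quotients) and whose deficiency gap $|\S_n|-|\X|$ is a fixed positive integer by Lemma \ref{l:ready}(3). This yields $H_1(\Th_n;\Z)\cong H_2(\E_n';\Z)\cong\Z^{|\S_n|-|\X|}$ when $\E_n'\neq 1$, which is exactly item (3). Your insistence on making the $\E_n$ super-perfect before forming fibre products sends you down a path (central extensions, spectral-sequence bookkeeping, ``acyclification'') that is both unnecessary and, as you yourself note, never closes: you do not produce a fixed $\H$ of which all the relevant groups are quotients, and you explicitly defer the control of asphericity, finite abelianization and constant deficiency gap, which is precisely the content needed for (3).

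Two of your intermediate claims are also wrong. First, Mayer--Vietoris does \emph{not} give $H_*(\E_n';\Z)\cong H_*(\E_n;\Z)$ in positive degrees: amalgamating over infinite cyclic subgroups contributes $H_*(\Z)$ terms to the sequence, and indeed $H_1(\E_n';\Z)=0$ whatever $H_1(\E_n;\Z)$ was. Second, in your final ``honest route'' the assertion that the fibre product of $\H\to\H/\ln w_n\rn$ equals $\H\times\H$ if and only if $w_n=1$ is backwards: that fibre product is all of $\H\times\H$ precisely when $\H/\ln w_n\rn$ is trivial, i.e.\ when $\ln w_n\rn=\H$, whereas $w_n=1$ gives the quotient $\H\neq 1$ and hence a proper fibre product. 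Moreover one-relator quotients of $\H$ have no reason to admit aspherical presentations with finite abelianization, so Lemma \ref{l:H2} would not apply to them. Your sketch of item (2) (finite quotients of $\Th_n$ are abelian) is essentially the right idea --- the diagonal dies in any finite quotient, so the quotient is generated by the images of the $(v,1)$, which are forced to commute because $(v,v)$ dies and $(v,1)$ commutes with $(1,v')$ --- but as written (``factors through the abelianization'') it is circular.
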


\begin{proof} Let $\<Y\mid T\>$ be the standard presentation
for Higman's group $J$ and consider the recursive sequence of presentations
$(\Pi_n')$ for $\E_n'$ given in definition \ref{d:Pi'}. 
 The presence of the relations $x_i^{-1}y_i$
in $\Pi_n'$ shows that the union $\mathbb Y$ of the sets $Y_i$ generates $\E_n'$.

Let $V_n$ be the set of words in the letters $\mathbb Y^{\pm 1}$ obtained
from $R_n$ (a subset of the relations of $\Pi_n'$) by replacing each occurrence of $x_i$
with $y_i$. Note that $(V_n)$ is a recursive sequence 
since $(R_n)$ was. We make the Tietze moves on $(\Pi_n')$ that delete the $x_i$ and the
relations $x_i^{-1}y_i$, thus obtaining the presentation
$\<\mathbb Y\mid V_n, T_1,\dots,T_l\>$ for $\E_n'$.

Let $\H$ be the free product of $l$ copies $J_i = \<Y_i\mid T_i\>$ of $J$
and let $q_n:\H\to\E_n'$ be the epimorphism  implicit in the labelling of generators.
The fibre product $\Th_n:=\{(h_1,h_2)\mid q_n(h_1)=q_n(h_2)\}\subset \H\times \H$ of $q_n$
is generated by $\th_n=\{(y,1)(1,y)\mid y\in\mathbb Y\}\cup \{(v,1)\mid v\in V_n\}$.
When the elements of $\th_n$ are expressed in the obvious manner as words in
the generators  $\{(y,1),\, (1,y) \mid y\in\mathbb Y\}$ of $\H\times\H$, the sets
$(\th_n)$ form a recursive sequence of sets of a fixed cardinality, since $(V_n)$ is
such a sequence.

By construction, $\Th_n = \H\times \H$ if and only if $\E_n'$ is trivial, and the $\E_n'$
were chosen so that there is no algorithm that can determine when $\E_n'=1$.

Since it is a fibre product, $\Th_n\subset \H\times \H$ is subdirect product 
and contains the diagonal subgroup $\H^{\Delta}=\H\times\H$.
Since $\H^{\Delta}\cong \H$ is a free product of copies of Higman's group, it has no non-trivial finite
quotients, so  any finite quotient $G$ of $\Th_n$ is generated by the images of the non-diagonal generators
$(v,1)$. Moreover, these are forced to commute in $G$, because given $(v_1,1)$
and $(v_2,1)$ we have $(v_i,1)(1,v_i)=1$ in $G$ and $(v_i,1)$ commutes with
$(1,v_j^{-1})$ in $\Th_n$. Thus $G$ is abelian.

It only remains to prove that $H_1(\Th_n;\Z)$ is infinite and torsion free if $\E_n'\neq 1$.
But this follows immediately from Lemmas \ref{l:mbc} and  \ref{l:H2} because
$\E_n'$ has an aspherical presentation with more relations than generators, namely $\Pi_n'$.
\end{proof}

\section{The proof of Theorem \ref{t:inj}}

\def\bY{\mathbb Y}
\def\bT{\mathbb T}
 
We maintain the notation of the preceding proof. 
We fix a finite presenation $\H=\<\bY \mid \bT\>$, where $\bY$ is the disjoint union
of the $Y_i$, as above. 
By applying the
algorithm of Theorem \ref{t:rips} to this presentation we obtain a short
exact sequence $1\to N\to \G\overset{p}\to \H\to 1$ and a presentation
for $\G$ with generators $\bY\cup\{a_1,a_2,a_3\}$, where $\{a_1,a_2,a_3\}$
generates $N$. 

As generators for $\G\times\G$ we take two disjoint
copies of this generating set, writing $(u,1)$ for words in the first and $(1,u)$
for words in the second.

Theorem \ref{t:BG} assures us that the fibre product $P\subset \H\times\H$
is finitely presented and that $\hat P\to \hat\H\times\hat\H$ is an isomorphism.
 
Let $\pi:\G\times\G\to \H\times\H$ denote the map induced by $p$.
By definition, $P=\pi^{-1}(\H^\Delta)$ where $\H^\Delta$ is
the diagonal subgroup. Let $K_n = \pi^{-1}(\Theta_n)$. Note that
$P\subset K_n$, by Theorem \ref{t:acyclicFP}(1), and that $K_n$
is generated by $\tilde\theta_n:=\theta_n\cup\A$,
where  $\A=\{(a_i,1),\, (1,a_i)\mid i=1,2,3\}$. Since $(\theta_n)$
is a recursive sequence, so is $(\tilde\theta_n)$.
 
If $\Theta_n=\H\times\H_n$ then $K_n=\G\times\G$. If 
$\Theta_n\neq 1$ then  Theorem \ref{t:acyclicFP}(3) tells us that
that there is a map, $q$ say, from $\Theta_n$ onto an infinite abelian group $Z$.
The composition $q\circ\pi$ maps $K_n$ onto $Z$ but 
$q\circ\pi(P)=1$ because $\pi(P)\cong\H$ is perfect.
Thus if $\Theta_n\neq 1$ we obtain infinitely many finite quotients of 
$K_n$ in which the image of $P$ is trivial; in particular $\hat P\to \hat K_n$
is not surjective. Since $\hat P\to \hat\G\times\hat\G$ is an isomorphism, it
follows that $\hat K_n\to \hat\G\times\hat\G$ is not injective.

We proved in Theorem \ref{t:acyclicFP} that there is no algorithm that can
determine when $\Theta_n\neq 1$, so the proof of Theorem \ref{t:inj} is complete.
\hfill $\square$

\end{document}